\newtheorem{lemma}{Lemma}[section]
\newtheorem{theorem}{Theorem}[section]
\newtheorem{corollary}{Corollary}
\theoremstyle{definition}
\newtheorem{definition}{Definition}
\theoremstyle{definition}
\newtheorem{example}{Example}
\theoremstyle{definition}
\newtheorem{remark}{Remark}
\newtheorem{statement}{Statement}
\numberwithin{equation}{section}
\begin{document}

\begin{frontmatter}
 
\title{On exponential stability of linear and nonlinear delay differential equations: a review and new results}

\author[label1]{Leonid Berezansky}
\author[label2]{Elena Braverman}
\author[label3]{Alexander Domoshnitsky}

\address[label1]{Dept. of Math., Ben-Gurion University of the Negev,
Beer-Sheva 84105, Israel}

\address[label2]{Dept. of Math. and Stats., University of
Calgary,2500 University Drive N.W., Calgary, AB  T2N 1N4, Canada}

\address[label3]{Dept. of Math.,
Ariel University, Ariel, 40700, Israel}






 \begin{abstract}
An extensive overview of existing criteria, as well as some new uniform exponential stability tests are included for
a scalar delay equation
$$
\dot{x}(t)+ \sum_{j=1}^n a_j(t)x(h_j(t))=0.
$$
Both cases of continuous and measurable parameters  $h_j$,  $a_j$  are  explored.

We apply the global linearisation approach and employ  linear results 
to explore global exponential stability   for nonlinear models of the form
$$
\dot{x}(t)+\sum_{j=1}^n f_j\left( t,x(h_j(t))  \right) =0.
$$
The proofs are based on  solution estimations. 
Further, the Bohl-Perron theorem on exponential dichotomy is instrumental for establishing global  exponential stability for nonlinear models.
Conclusions are illustrated with numerical examples.
\end{abstract}
 
\begin{keyword}
Linear scalar differential equations with delay, nonlinear differential equations, global  exponential stability, solution estimates, M-matrix, exponential dichotomy

{\bf AMS Subject Classification:}  34K20, 34K25

\end{keyword}

\end{frontmatter}


\section{Introduction}\label{int}
\label{introduct}

The dynamics of applied models in physics, engineering, and population dynamics can frequently be defined by the behaviour of solutions to a scalar differential equation
\begin{equation}\label{1}
\dot{x}(t)+\sum_{j=1}^n f_j\left( t,x(h_j(t)) \right)=0, \quad  t\geq t_0
\end{equation}
with multiple delays (here we assume $h_j(t)\leq t$ for any $j$).

If $f_j(t,0)\equiv 0$,  while the limits $\displaystyle a_j(t) = \lim_{x \to 0} \frac{f_j(t,x)}{x}$  exist and are finite,
the linearised equation about the zero equilibrium for \eqref{1} has the form
\begin{equation}\label{2}
\dot{x}(t)+ \sum_{j=1}^n a_j(t)x(h_j(t))=0, \quad  t\geq t_0.
\end{equation}
Equations \eqref{1} and \eqref{2} are usually considered  with a prescribed initial function
\begin{equation}\label{2A}
x(t)=\psi(t) \mbox{~~ for ~~} t\leq t_0.
\end{equation}

Linearised equations for known  mathematical models have different forms: equations with one, two or several delays,
equations in which all the terms are delayed and those containing a non-delay term, dominating or not,  equations with only nonnegative coefficients or equations
with oscillatory  coefficients, equations with coefficients of prescribed signs, not all of them positive, and equations with bounded, as well as with
 with unbounded delays, inherited from their nonlinear counterparts. 
 
To obtain efficient local  stability conditions for a specific form of nonlinear equation \eqref{1},  a set of explicit, convenient to use
stability tests is required for all the types of linear models listed above.

The first part of the present  paper outlines a review of several known asymptotic and exponential stability tests 
for  \eqref{2} and its special cases. 
We start with a better investigated case of continuous parameters in Section~\ref{sec:continuous}. 
However, in many cases   it is still unknown whether the results for the continuous case can be extended to  measurable
parameters without changing critical constants.
Then, in Section~\ref{sec:other}  we present tests for  measurable parameters. 
We also consider two important classes of equations: with two delays and  when there is a term without delay.

The second part, in particular,  Section~\ref{sec:apriori}, presents new stability conditions for \eqref{2}
in a form quite convenient in applications.
In Section~\ref{applications},  we formulate and apply global linearised stability theorem 
to analyze stability properties of nonlinear models  in the form of \eqref{1}.
In Section~\ref{sec:discussion},  we discuss the reviewed and new results.
We also contemplate on unsolved problems and discuss relevant further research.

We  review here equations with one delay term ($m=1$) only with connection to stability tests for equations with several delays.
Such partial cases  as  autonomous equations with constant coefficients and constant delays, 
equations with periodic parameters, and some other classes of scalar functional differential equations are not considered.
Also,  in the present paper, we don't deal  with  equations of higher order, vector equations and systems of linear equations.

Before we start with the review, let us present basic definitions. Relevant notations are introduced later, in Section~\ref{sec:other}.

\begin{definition}
\label{def1}
We will say that  \eqref{2} is
\begin{itemize}
\item
asymptotically stable (AS) if, in addition to its (local) stability, any solution $x(t)$ 
satisfies $\lim\limits_{t\rightarrow\infty} x(t)=0$;
\item
uniformly exponentially stable (UES) if there are constants ${\mathcal M}>0$ and $\nu >0$  such that any solution 
of problem \eqref{2}, \eqref{2A} has the estimate
$$
|x(t)|\leq  {\mathcal M} e^{-\nu (t-t_0)} \sup_{t\leq t_0}|\psi(t)|  \mbox{ ~ for ~~} t\geq t_0,
$$
while the numbers ${\mathcal M}$ and $\nu$ are independent  of both the initial point $t_0$ and  the initial function $\psi(t)$.
\end{itemize}
\end{definition}

\begin{definition}
The equilibrium of equation  \eqref{1} (assumed to be zero) is
\begin{itemize}
\item
locally asymptotically stable (LAS) if  for every $\varepsilon>0$ there is a constant $\delta>0$ for which $|\psi(t)|\leq \delta$ implies $|x(t)| <\varepsilon$, $t \geq t_0$ and $\lim\limits_{t\rightarrow\infty} x(t)=0$
 for a solution $x$ to \eqref{1}, \eqref{2A};
\item
locally exponentially stable  (LES)   if  there are  $\delta>0$, ${\mathcal M}>0$ and  $\nu>0$  such that  the estimate 
\begin{equation}\label{2b}
|x(t)| \leq {\mathcal M} e^{-\nu (t-t_0)}  \sup_{t\leq t_0}|\psi(t)| 
\end{equation}
holds for all solutions $x$ to \eqref{1}, \eqref{2A}  such that  $|\psi(t)|\leq \delta$;
\item
globally asymptotically stable (GAS) if, in addition to (local) stability, all solution to   \eqref{1}, \eqref{2A}  tend to the equilibrium $\lim\limits_{t\rightarrow\infty} x(t)=0$;
\item
globally exponentially stable (GES) if  \eqref{2b}  is satisfied for any solution to \eqref{1}, \eqref{2A}.
\end{itemize}
\end{definition}

Recall that the fundamental function  $X(t,s)$ of equation \eqref{2} 
solves equation \eqref{2} with the variable initial point $t_0= s$, the zero
initial function $x(t)=0$,  $t<s$, and the initial value equal to one: $x(s)=1$. 
All the stability  definitions for linear equations can be formulated in an equivalent form, using the fundamental function. 
For some equations, such form is more convenient.

\begin{definition}
Linear equation \eqref{2} is called
\begin{itemize}
\item
uniformly stable if the exists ${\mathcal M}>0$ which is an upper bound for $X(t,s)$:  $|X(t,s)|\leq M$ ;
\item
uniformly asymptotically stable, once the limit $\lim\limits_{t\rightarrow\infty}X(t,s)=0$  is uniform in $s$ on $[t_0,\infty)$;
\item 
uniformly exponentially stable if some positive constants ${\mathcal  M} >0$ and $ \nu>0$ define a uniform exponential estimate 
$|X(t,s)|\leq {\mathcal M} e^{-\nu (t-s)}$.
\end{itemize}
\end{definition}

\section{Equations with continuous parameters}
\label{sec:continuous}

In this section, we assume that all the functions $a_j(t)$,  $h_j(t), k=1,\dots, m$ are continuous on $[t_0,\infty)$, 
$\lim\limits_{t\rightarrow\infty} h_j(t)=\infty$,
and the initial function $\psi(t)$ is continuous on $(-\infty, t_0]$.

Equation \eqref{2} with one delay 
\begin{equation}\label{2d}
\dot{x}(t)+a(t)x(h(t))=0
\end{equation}
is very well studied, see   \cite{SYC} for a review of stability results for \eqref{2d}. In particular, the 
following famous stability test \cite{Yoneama} holds for equation \eqref{2d}, see also \cite{Yoneyama1} for unbounded delays.

\begin{theorem}\cite{Yoneama}\label{th20a}
Assume that the coefficient $a(t)$ is nonnegative,
$$
0<\liminf_{t\rightarrow\infty} \int_{h(t)}^t a(s)ds\leq \limsup_{t\rightarrow\infty} \int_{h(t)}^t a(s)ds<\frac{3}{2}.
$$
Then equation \eqref{2d} is AS. The constant $\frac{3}{2}$ is sharp.
\end{theorem}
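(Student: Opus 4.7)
The plan is to select $\alpha$ with $\limsup_{t\to\infty} \int_{h(t)}^t a(s)\,ds < \alpha < 3/2$ and $T$ large enough that $\int_{h(t)}^t a(s)\,ds \leq \alpha$ for all $t \geq T$. The goal is then to show every solution $x$ tends to $0$; uniform boundedness follows easily by integrating $\dot{x}(t) = -a(t)\,x(h(t))$ over successive windows and using the control $\int_{h(t)}^t a \leq \alpha$. I would split the analysis into two cases: either $x$ is eventually of constant sign, or $x$ has arbitrarily large zeros.

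If $x(t) \geq 0$ for all large $t$ (the case $x \leq 0$ is symmetric), then $\dot{x}(t) = -a(t)\,x(h(t)) \leq 0$ once $h(t)$ has entered the region where $x \geq 0$, so $x$ is eventually nonincreasing and converges to some $\ell \geq 0$. If $\ell > 0$, the hypothesis $\liminf_{t\to\infty} \int_{h(t)}^t a(s)\,ds > 0$ forces $x$ to drop by a positive amount on every sufficiently late window of bounded length, contradicting $x \to \ell$; hence $\ell = 0$.

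The heart of the matter is the oscillating case. Set $M = \limsup_{t\to\infty} x(t) \geq 0$, $m = -\liminf_{t\to\infty} x(t) \geq 0$, and argue by contradiction assuming $\max(M,m) > 0$. For any local maximum $t^*$ of $x$ with $x(t^*) > 0$, the relation $\dot{x}(t^*) = 0$ gives $x(h(t^*)) = 0$; if $t'$ is the last zero of $x$ before $t^*$, then $h(t^*) \leq t'$. Writing
$$ \int_{h(t^*)}^{t^*} a(s)\,ds = \int_{h(t^*)}^{t'} a(s)\,ds + \int_{t'}^{t^*} a(s)\,ds =: A_1 + A_2 \leq \alpha, $$
integrating $\dot{x}$ on $[t',t^*]$, and bounding $|x(h(s))|$ on the two sub-intervals by $m$ or $M$ respectively, one derives an inequality of the form $M \leq \varphi(\alpha)\max(M,m)$ with a contraction factor $\varphi(\alpha) < 1$ precisely when $\alpha < 3/2$. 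Applying the symmetric estimate at the next negative extremum yields $m \leq \varphi(\alpha)\max(M,m)$, forcing $M = m = 0$.

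The main obstacle is this extremal calculation: the constant $3/2$ emerges only after optimizing the worst case over the partition $A_1 + A_2 \leq \alpha$ and carefully tracking the sign of $x(h(s))$ on each sub-segment — in particular, for $s \in [t', t^*]$ the argument $h(s)$ may lie below $t'$ (where $x$ was negative and of magnitude at most $m$) or in $[t', s]$ (where $x \leq M$). Sharpness can be exhibited through a piecewise-constant-coefficient example with $\int_{h(t)}^t a(s)\,ds \equiv 3/2$, whose fundamental solution is periodic and therefore does not decay, showing the bound cannot be relaxed.
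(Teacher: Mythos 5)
This statement is quoted in the paper as a known result of Yoneyama (the classical $3/2$ theorem) and no proof is given there, so your attempt has to be measured against the standard argument in the literature. Your overall architecture (boundedness, the dichotomy between eventually sign-definite and oscillatory solutions, and a contraction estimate $M\le\varphi(\alpha)\max(M,m)$ at extrema) is indeed the classical route, and your treatment of the nonoscillatory case is fine, since the hypothesis $\liminf_{t\to\infty}\int_{h(t)}^t a(s)\,ds>0$ together with $h(t)\to\infty$ gives $\int^\infty a=\infty$ and rules out a positive limit. The problem is that the step you yourself call ``the heart of the matter'' is exactly where the theorem lives, and as described it does not produce the constant $3/2$. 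If on $[t',t^*]$ you only bound $|x(h(s))|$ by $m$ when $h(s)<t'$ and note $x(h(s))\ge 0$ when $h(s)\ge t'$, the best you get is $x(t^*)\le (m+\varepsilon)\int_{\{s\in(t',t^*):\,h(s)<t'\}}a(s)\,ds\le (m+\varepsilon)\,\alpha$, i.e.\ a contraction only for $\alpha<1$ (this is the Krisztin-type constant of Theorem~\ref{th20}, not $3/2$). The sharp factor requires the second, iterated estimate: for such $s$ one writes $x(h(s))=x(t')-\int_{h(s)}^{t'}a(\xi)x(h(\xi))\,d\xi$, bounds it by $(\max)\int_{h(s)}^{t'}a$, and uses the constraint $\int_{h(s)}^{s}a\le\alpha$ to convert this into $\alpha-\int_{t'}^{s}a$; integrating in $s$ then yields the piecewise factor $\varphi(\alpha)=\alpha^2/2$ for $\alpha\le 1$ and $\varphi(\alpha)=\alpha-\tfrac12$ for $1\le\alpha<\tfrac32$, which is $<1$ precisely when $\alpha<\tfrac32$. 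Without this extra integration of the equation your ``optimization over the partition $A_1+A_2\le\alpha$'' cannot reach $3/2$, so the proof is incomplete at its decisive point.

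A secondary flaw: from $\dot{x}(t^*)=-a(t^*)x(h(t^*))=0$ you cannot conclude $x(h(t^*))=0$, since $a(t^*)$ may vanish (the coefficient is only assumed nonnegative, and may be zero on intervals), so the localization $h(t^*)\le t'$ is not justified as stated. The standard fix is not to look at the maximum point itself but at the points $s<t^*$ where $\dot{x}(s)>0$: there necessarily $a(s)>0$ and $x(h(s))<0$, hence $h(s)<t'$, and these are the only points contributing positively to $x(t^*)=\int_{t'}^{t^*}\dot{x}$. With that correction and the iterated estimate above, your skeleton becomes the classical Yorke--Yoneyama proof; the sharpness example you indicate (piecewise-constant data with $\int_{h(t)}^t a\equiv\tfrac32$ admitting a nondecaying periodic solution) is the standard one and is acceptable as a sketch.
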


A fundamental result for equations with continuous parameters and several delays was obtained by Krisztin  \cite{Kz}.

\begin{theorem}\cite{Kz}\label{th20}
1) Let  $0 \leq a_j(t)\leq a_j$, $0\leq t-h_j(t)\leq \tau_j$,  $j=1,\dots,n$,   the integral of the sum of the coefficients diverge
$\displaystyle \int_{\cdot}^{\infty}  \sum_{j=1}^n a_j(s)ds=+\infty$, and
\begin{equation}\label{23a}
\sum_{j=1}^n a_j\tau_j<1.
\end{equation}
Then, \eqref{2} is  AS, and one in the right-hand side of \eqref{23a}
is a sharp constant which cannot be replaced with any $c>1$.

2) If $a_j(t) \equiv  \alpha_j$ are positive constants then one in \eqref{23a} is no longer optimal, a larger constant~$\frac{3}{2}$ in the right-hand side  is  the best possible
for the case.
\end{theorem}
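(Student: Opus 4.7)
The plan is to handle Parts 1 and 2 separately, establishing sufficiency in each and then sharpness. For Part~1 sufficiency, the key reformulation is obtained by writing $x(h_j(t)) = x(t) - \int_{h_j(t)}^t \dot{x}(s)\, ds$, which recasts \eqref{2} as
$$
\dot{x}(t) + A(t)\, x(t) = \sum_{j=1}^n a_j(t) \int_{h_j(t)}^t \dot{x}(s)\, ds, \qquad A(t) := \sum_{j=1}^n a_j(t).
$$
The product sum $\sum_j a_j\tau_j$ then appears naturally as the effective contraction coefficient, measuring the delay-induced perturbation on the right against the stabilising drift $A(t)x(t)$ on the left.

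First I would prove boundedness of $x$ on $[t_0,\infty)$: set $M=\sup_{t\ge t_0}|x(t)|$, assume $M>M_\psi:=\sup_{s\le t_0}|\psi(s)|$, and use the crude bound $|\dot x(s)|\le A(s)\,M$ in the reformulated equation, together with an exponential integrating factor built from $A$, to contradict the attainment of $M$ via $\sum_j a_j\tau_j<1$. Once boundedness is secured, I would upgrade to asymptotic decay by setting $L=\limsup_{t\to\infty}|x(t)|$ and running a tail version of the same argument: on large enough intervals the bound $|\dot x(s)|\le A(s)(L+\varepsilon)$ holds, and the divergence $\int^\infty A(s)\,ds=\infty$ converts a local contraction into the asymptotic inequality $L\le\bigl(\sum_j a_j\tau_j\bigr)L$, forcing $L=0$. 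Sharpness of the constant $1$ in Part~1 I would exhibit by constructing bounded, non-constant oscillatory coefficients $a_j(t)$ with $\sum_j a_j\tau_j$ equal to any prescribed $c\in(1,3/2)$, supported on intervals where $x$ and $x(h_j(t))$ have opposite signs so the delay perturbation reinforces rather than damps the motion; this is precisely where the flexibility of time-varying coefficients beats the autonomous $3/2$ bound.

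For Part~2, sufficiency of the relaxed bound $3/2$ under constant coefficients $a_j(t)\equiv\alpha_j$ would proceed by reduction to Yoneyama's single-delay $3/2$ test (Theorem~\ref{th20a}): the autonomous multi-delay sum $\sum_j \alpha_j x(t-\tau_j)$ is compared with a single-delay model whose integral $\int_{h(t)}^t a(s)\,ds$ matches $\sum_j \alpha_j\tau_j$, after which the existing $3/2$ machinery applies. Sharpness of $3/2$ then follows from a family of autonomous multi-delay counterexamples with $\sum_j \alpha_j\tau_j$ arbitrarily close to $3/2$ from above and suitably many delays, admitting a non-decaying solution. The main obstacle I anticipate is precisely the Part~2 sufficiency step: Yoneyama's single-delay proof hinges on a delicate two-interval splitting driven by sign changes of $x$, and transferring that argument to a weighted sum of delays---where each $\tau_j$ interacts differently with those sign changes---is technically much more subtle than Part~1. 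Producing the right effective single-delay comparison, or reworking the splitting simultaneously across all $\tau_j$ via a suitable weighted integration, is where most of the effort would go.
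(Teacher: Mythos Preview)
This theorem carries no proof in the paper: it appears in the review Section~\ref{sec:continuous} on continuous parameters and is attributed entirely to Krisztin~\cite{Kz}, with the authors merely quoting the statement and commenting on the significance of the constants $1$ and $3/2$. There is therefore no in-paper argument against which your attempt can be compared; any genuine comparison would have to be made with Krisztin's original article rather than the present survey.

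As a side remark on the sketch itself: your Part~1 reformulation is standard and correct, but the passage from it to the contraction inequality $L\le\bigl(\sum_j a_j\tau_j\bigr)L$ is less immediate than you indicate. Substituting the bound $|\dot x(s)|\le A(s)M$ into $\sum_j a_j(t)\int_{h_j(t)}^t|\dot x(s)|\,ds$ yields $M\sum_j a_j(t)\int_{h_j(t)}^t A(\xi)\,d\xi$, and collapsing this to something governed by $\sum_j a_j\tau_j$ alone---while still being able to integrate against the factor $e^{-\int_s^t A}$ in the absence of any lower bound on $A(t)$---needs more than a one-line substitution; the naive route picks up an unwanted extra factor $\sum_k a_k$. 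Your own caveat about Part~2 is well placed: a direct reduction to Yoneyama's single-delay test is not how the $3/2$ bound for constant coefficients is established, and turning that heuristic comparison into a proof would be a substantial undertaking in its own right.
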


Theorem~\ref{th20} gives explicit stability conditions for equation \eqref{2}. 
However, the important message of this theorem is that, when moving from one to several delay terms, the constant $\frac{3}{2}$ is not preserved and
should be replaced by a smaller best possible constant of $1$. 
Note that for constant coefficients and one term, the best constant is $\pi/2$ exceeding $\frac{3}{2}$.
 
The statement   in \cite[Corollary 2.4]{SYC} adds the following stability test to Theorem~\ref{th20}.

 \begin{theorem}\cite{SYC}\label{th20b}
Let $a_j(t)\geq 0$, $h_n(t)\geq \cdots \geq h_1(t)$. If
 $$
 \limsup_{t\rightarrow\infty}\int_{h_1(t)}^t \sum_{j=1}^n a_j(s)ds<\frac{3}{2} \, ,
 $$
for each solution to \eqref{2}  there is a constant $c$ such that  $\displaystyle  \lim_{t\to\infty} x(t)=c$. 
An additional conditions of divergence
 $\displaystyle \int_{t_0}^{\infty} \sum_{j=1}^n a_j(s)ds=\infty$  implies that  \eqref{2} is AS.
 \end{theorem}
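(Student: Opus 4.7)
The overall strategy is to reduce the multi-delay problem to a single-delay one centred on the \emph{longest} delay $h_1(t)$, and then to mimic the Yoneyama-type $3/2$ argument underlying Theorem~\ref{th20a}. Writing $A(t)=\sum_{j=1}^n a_j(t)$, the first step is to rewrite \eqref{2} in the form
$$
\dot x(t)+A(t)x(h_1(t))=\sum_{j=1}^n a_j(t)\bigl[x(h_1(t))-x(h_j(t))\bigr].
$$
Because $h_1(t)\le h_j(t)\le t$, each difference $x(h_1(t))-x(h_j(t))$ is an integral of $\dot x$ over a subinterval of $[h_1(t),t]$, so the right-hand side is entirely controlled by $A$ and by the variation of $x$ on the interval appearing in the hypothesis. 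Thus the problem effectively becomes a one-delay equation of Yoneyama type with delay $h_1(t)$, plus a small perturbation.

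Next I would establish boundedness. From \eqref{2} one has $|\dot x(s)|\le A(s)\sup_{\sigma\in[h_1(s),s]}|x(\sigma)|$, and integrating this over $[h_1(t),t]$ together with the hypothesis $\limsup\int_{h_1(t)}^tA(s)\,ds<3/2$ yields an a priori bound of the form $\sup_{s\le t}|x(s)|\le C\sup_{\sigma\le t_0}|\psi(\sigma)|$ with $C$ depending only on the limsup. This is the exact analogue of the boundedness portion of Yoneyama's proof, with the single delay $h(t)$ replaced by $h_1(t)$.

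Having boundedness, set $M=\limsup_{t\to\infty}x(t)$ and $m=\liminf_{t\to\infty}x(t)$; both are finite. The goal is to show $M=m$, giving $\lim x(t)=c$. I would argue by contradiction: choose sequences $t_k,s_k\to\infty$ at which $x$ approaches $M$ and $m$, so $\dot x(t_k),\dot x(s_k)\to 0$. Using the equation and the rewriting of Step~1, one passes to the limit to obtain estimates of the form $M+m\le 0$ and $M+m\ge 0$, forcing $M=-m$; a second iteration with the same $3/2$ bound then produces $M\le \kappa M$ for some $\kappa<1$, so $M=m$. At this point the first assertion of the theorem is proved. For the second assertion, integrating \eqref{2} from $t_0$ to $T$ gives $x(T)-x(t_0)=-\int_{t_0}^T\sum_j a_j(s)x(h_j(s))\,ds$; if the limit $c$ were nonzero, then for all sufficiently large $s$ each $x(h_j(s))$ lies within $|c|/2$ of $c$, so the integrand is bounded below in absolute value by a positive multiple of $A(s)$, and the divergence $\int^\infty A(s)\,ds=\infty$ would force the right-hand side to diverge while the left-hand side stays bounded. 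Hence $c=0$ and \eqref{2} is AS.

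The main obstacle is Step~3, the oscillation/limit argument. Yoneyama's single-delay proof exploits a precise interplay between the sup and inf of $x$ on delay intervals, and adapting it requires transferring that estimate to $h_1(t)$ while absorbing the perturbation terms $a_j(t)[x(h_1(t))-x(h_j(t))]$ without any loss in the constant $3/2$. Since this constant is sharp in Theorem~\ref{th20a}, the bookkeeping has to be done with essentially no slack; the saving grace is that the perturbations integrate over sub-intervals of $[h_1(t),t]$, which is exactly the interval that the hypothesis controls.
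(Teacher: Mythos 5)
The paper itself contains no proof of this theorem --- it is quoted from \cite{SYC} (Corollary 2.4 there) --- so your sketch can only be compared with the Yorke--Yoneyama type argument that result rests on, and against that standard your plan has a genuine gap at its core. The reduction you choose, isolating $A(t)x(h_1(t))$ and treating $\sum_{j} a_j(t)\bigl[x(h_1(t))-x(h_j(t))\bigr]$ as a perturbation to be ``absorbed without any loss in the constant,'' cannot be carried out: each difference is controlled only by $\int_{h_1(t)}^{t}|\dot x(s)|\,ds$, which under the hypothesis is of size up to $\tfrac32\sup|x|$, i.e.\ of the \emph{same order} as the main term, not small. Perturbation bookkeeping of exactly this kind is what degrades the constant in the multi-delay setting (compare Theorem~\ref{th20}, where the pointwise several-delay condition has sharp constant $1$, not $\tfrac32$). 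The reason $\tfrac32$ survives in the present theorem is different and is the idea your sketch is missing: since $a_j(t)\ge 0$ and $h_1(t)\le h_j(t)\le t$, every value $x(h_j(t))$ lies between $m(t)=\min_{s\in[h_1(t),t]}x(s)$ and $M(t)=\max_{s\in[h_1(t),t]}x(s)$, so the equation \eqref{2} itself satisfies the two-sided (Yorke-type) inequality $-A(t)M(t)\le \dot x(t)\le -A(t)m(t)$ with the single interval $[h_1(t),t]$ and coefficient $A(t)$. One then runs the $\tfrac32$ fluctuation estimates directly on this differential inequality; no decomposition and no absorption of error terms occur, which is precisely why the sharp constant is preserved.

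Two further steps of your outline do not stand as written. The boundedness argument does not close: from $|\dot x(s)|\le A(s)\sup_{\sigma\in[h_1(s),s]}|x(\sigma)|$ and $\int_{h_1(t)}^{t}A(s)\,ds<\tfrac32$ one only obtains an inequality of the shape $\sup|x|\le |x(t^{*})|+\tfrac32\sup|x|$, which is vacuous because $\tfrac32>1$; in the $\tfrac32$ theorems boundedness is not a separate Gronwall estimate but a byproduct of the oscillation analysis (eventually sign-constant solutions are eventually monotone, hence bounded and convergent, while for oscillatory solutions the amplitudes of successive fluctuations contract by a factor that is less than one exactly when the integral bound is below $\tfrac32$). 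Likewise your Step~3 (``$M+m\le 0$ and $M+m\ge 0$, then a second iteration gives $M\le\kappa M$'') is a placeholder for this hard quantitative part rather than an argument, and it is precisely where the sharpness of $\tfrac32$ must enter. Your final step is fine: if $x(t)\to c\neq 0$, then since $h_j(t)\to\infty$ the integrand in $x(T)-x(t_0)=-\int_{t_0}^{T}\sum_j a_j(s)x(h_j(s))\,ds$ is eventually bounded below in modulus by $\tfrac{|c|}{2}A(s)$, and $\int^{\infty}A(s)\,ds=\infty$ gives a contradiction, so $c=0$ and \eqref{2} is AS.
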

 
Faria in the paper \cite{Faria2022}  investigated a very general linear equation with a non-delay term
which includes equations \eqref{2} with a non-delay term and several delay terms, and equations with a non-delay term and distributed delays. 

Equations \eqref{2} with two terms ($m=2$) often appear as linearised equations of population dynamics.
Therefore, such equations require a more detailed study.

Consider an equation with a non-delay term 
\begin{equation}\label{24a}
\dot{x}(t)=-a(t)x(t)+b(t)x(h(t))
\end{equation}
with $a(t)>0$. 

The following stability result was obtained in \cite{GH} and also follows  from  \cite[Corollary 3.2]{Faria2022}.

\begin{theorem}\cite{Faria2022,GH}\label{th21a}
Assume that there are a constant $T\geq 0$  and a function $e(t)\geq 0$ which is locally integrable, but 
 $\int_T^{\infty} e(t)dt=\infty$. If either \vspace{2mm}
\\
(i) $ \displaystyle   a(t)-|b(t)|\sup_{t\geq T}\int_{h(t)}^t e(u)du \geq e(t)$   \vspace{2mm}
\\
or \vspace{2mm}
\\
(ii) $ \displaystyle   a(t)\geq \alpha |b(t)|$ for some $\alpha>1$    \vspace{2mm}
\\
or  \vspace{2mm}
\\
(iii) 
$ \displaystyle   
e(t)\leq a(t)-|b(t)|,~ \sup_{t\geq T}\int_{h(t)}^t e(u)du<\infty
$ \vspace{2mm}
\\
then  \eqref{24a} is AS.
\end{theorem}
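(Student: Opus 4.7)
The strategy is to unify the three conditions via the variation-of-parameters representation associated with the non-delay part $\dot{y}+a(t)y=0$, first obtain a uniform a priori bound on $|x(t)|$, and then upgrade that bound to decay using $\int_{T}^{\infty}e(s)\,ds=\infty$. Setting $X_{0}(t,s)=\exp\!\left(-\int_{s}^{t}a(u)\,du\right)$, any solution of \eqref{24a}, \eqref{2A} satisfies
$$
x(t) = X_{0}(t,t_{0})\,x(t_{0}) + \int_{t_{0}}^{t} X_{0}(t,s)\,b(s)\,x(h(s))\,ds,
$$
and the identity $x(h(s)) = x(s) - \int_{h(s)}^{s} \dot{x}(\tau)\,d\tau$ provides the main technical tool, trading the delayed term for a perturbation controlled by $\int_{h(\cdot)}^{\cdot} e$ once the equation is used to substitute for $\dot{x}$.

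First I would establish a uniform a priori bound $|x(t)| \leq C\sup_{s\leq t_{0}}|\psi(s)|$ in each of the three cases. Case (ii) follows from a maximum-principle argument: if $|x|$ attempted a strictly new supremum at some $t^{*}$, then
$$
\tfrac{1}{2}\tfrac{d}{dt}x^{2}\Big|_{t^{*}} \leq -a(t^{*})|x(t^{*})|^{2} + |b(t^{*})||x(t^{*})|^{2} \leq -a(t^{*})\bigl(1-1/\alpha\bigr)|x(t^{*})|^{2} < 0,
$$
which is impossible. Cases (i) and (iii) are handled by the substitution identity, which rewrites \eqref{24a} as $\dot{x}(t)+(a(t)-b(t))\,x(t) = -b(t)\int_{h(t)}^{t}\dot{x}(\tau)\,d\tau$. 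The diagonal dominance provided by either hypothesis together with $K:=\sup_{t\geq T}\int_{h(t)}^{t}e(u)\,du<\infty$ then closes a Gronwall-type estimate that absorbs the delay correction into the non-delay damping.

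To pass from boundedness to $x(t) \to 0$, I would use the Halanay-type inequality
$$
\tfrac{d}{dt}|x(t)| \leq -a(t)\,|x(t)| + |b(t)|\sup_{s \in [h(t),t]}|x(s)|,
$$
valid wherever $|x|$ is differentiable. Under (ii), this is a generalised Halanay inequality with contraction ratio $1/\alpha$, which forces exponential decay as soon as $\int a = \infty$ (secured by taking $e:=(1-1/\alpha)a$ in the hypothesis). Under (i) and (iii), I would instead use a $\limsup$ argument: set $L=\limsup_{t\to\infty}|x(t)|$, insert $|x(h(s))|\leq L+\varepsilon$ for large $s$ into the variation-of-parameters representation, and exploit $a\geq e$ together with the divergence $\int e=\infty$ to show that the kernel $\int_{T_{\varepsilon}}^{t}X_{0}(t,s)|b(s)|\,ds$ is eventually bounded by a constant strictly less than one; the resulting strict contractivity forces $L=0$.

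The main obstacle is case (i): the constant $K$ enters the hypothesis in a self-referential way, so showing the kernel estimate or, equivalently, the strict contractivity of the Halanay-type inequality, requires using the full quantitative strength $a-e\geq |b|K$ rather than the weaker $a-e\geq |b|$ available in (iii). Tracking the factor $K$ carefully through the integral estimates, so that the bound on $\int X_{0}|b|$ stays below one uniformly in $t$, is the delicate point where the exact form of the condition has been tailored to make the argument close.
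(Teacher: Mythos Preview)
The paper contains no proof of this statement. Theorem~\ref{th21a} appears in Section~\ref{sec:continuous}, the review portion on equations with continuous parameters, and is attributed to Gy\H{o}ri--Horv\'ath \cite{GH} and Faria \cite{Faria2022}; it is quoted from those sources without argument, so there is no proof in the paper against which your proposal can be compared.

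As a side remark on your sketch: the variation-of-parameters representation with kernel $X_0(t,s)=\exp\bigl(-\int_s^t a\bigr)$ and a Halanay-type comparison are indeed the tools used in \cite{GH}, so the overall strategy is sound. One point to watch in case~(ii): your remedy of ``taking $e:=(1-1/\alpha)a$'' to secure $\int a=\infty$ does not follow from the hypotheses as stated, since the theorem only assumes the \emph{existence} of some $e$ with divergent integral, not that any specific choice has this property; with $b\equiv 0$ and $a$ integrable the condition $a\geq\alpha|b|$ is satisfied but the equation is not AS. This is a looseness in the paper's restatement of the result (the original sources impose an additional non-degeneracy condition) rather than a flaw in your reasoning, but your argument for~(ii) would need that missing hypothesis to close.
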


Condition  (ii) implies AS in the case of measurable parameters, 
which is a scalar case of the result, obtained in \cite{BTT}.
Note also that the coefficients in equation \eqref{24a} are not assumed to be bounded for $t\geq T$.

Consider a scalar differential equation including a term with a constant positive coefficient without delay
\begin{equation}\label{24}
\dot{x}(t)+\alpha x(t)+\beta(t)x(h(t))=0.
\end{equation}

  The following result is a corollary of a global asymptotic stability test obtained in \cite{Liz} for a nonlinear equation.

\begin{theorem}\cite{Liz}\label{th21}
Assume that $\alpha>0$,  the delay and the variable coefficients are bounded $0\leq \beta(t)\leq b$,  $0\leq t-h(t)\leq h$, where $b > 0$, $h \geq 0$, and 
\begin{equation*} 
\frac{\alpha}{b}e^{- \alpha h}>\ln \frac{b^2+\alpha b}{b^2+\alpha^2} \, .
\end{equation*}
Then equation \eqref{24} is AS. 
\end{theorem}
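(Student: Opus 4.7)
The plan is to derive Theorem~\ref{th21} as the linear special case of Liz's global asymptotic stability theorem in \cite{Liz} for the nonlinear equation $\dot x(t)+\alpha x(t)+f(t,x(h(t)))=0$, by choosing $f(t,u)=\beta(t)u$, which satisfies $|f(t,u)|\leq b|u|$ and $uf(t,u)\geq 0$. Unpacking that argument in the linear setting, the strategy I would follow is based on the variation-of-constants formula with respect to the non-delay term together with a contraction estimate between consecutive extrema of any oscillating solution; the transcendental inequality in the hypothesis turns out to be exactly the condition making this contraction ratio strictly less than one.

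Concretely, I would first integrate \eqref{24} against the non-delay part to get, for $t\geq s$,
\begin{equation*}
x(t)=e^{-\alpha(t-s)}x(s)-\int_s^t e^{-\alpha(t-u)}\beta(u)\,x(h(u))\,du,
\end{equation*}
and derive from it a uniform bound, so that $L:=\limsup_{t\to\infty}|x(t)|$ is finite. Then I would split into cases. If $x$ is eventually of constant sign on $[T,\infty)$, comparison with the scalar ODE $\dot y=-\alpha y+bL$ applied recursively forces $L=0$. If instead $x$ oscillates, with consecutive zeros $z_k\to\infty$ and intermediate extrema $t_k^*$, applying the representation on $[z_k,t_{k+1}^*]$ with $x(z_k)=0$ gives
\begin{equation*}
|x(t_{k+1}^*)|\leq\int_{z_k}^{t_{k+1}^*}e^{-\alpha(t_{k+1}^*-u)}\beta(u)\,|x(h(u))|\,du,
\end{equation*}
and after bounding $|x(h(u))|$ in terms of the previous extremum $|x(t_k^*)|$ one derives a contraction $|x(t_{k+1}^*)|\leq q\,|x(t_k^*)|$ with $q=q(\alpha,b,h)$.

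The main obstacle is verifying $q<1$ under the hypothesis $(\alpha/b)e^{-\alpha h}>\ln\frac{b^2+\alpha b}{b^2+\alpha^2}$. The crude uniform bound $|x(h(u))|\leq|x(t_k^*)|$ would yield only the Halanay-type condition $b(1-e^{-\alpha h})<\alpha$, which is strictly weaker than the hypothesis of the theorem. To recover the sharp constant one has to exploit the fact that $h(u)$ typically lies close to the previous zero of $x$, so that $|x(h(u))|$ itself grows from zero at a rate controlled by $\alpha$ and $b$, and then optimise the resulting estimate over the length $\sigma=t_{k+1}^*-z_k\in[0,h]$. The associated extremal problem has the transcendental inequality in the theorem as its solvability condition: the logarithm arises from the explicit maximisation, while the factor $e^{-\alpha h}$ corresponds to the worst case $\sigma=h$. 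Once $q<1$ is established, iteration together with the boundedness of $|x(t)|$ between consecutive extrema forces $x(t)\to 0$, giving asymptotic stability as claimed.
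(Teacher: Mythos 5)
The paper itself contains no proof of this statement: it presents Theorem~\ref{th21} exactly as you do in your opening sentence, namely as the linear specialisation of the global attractivity theorem of Ivanov--Liz--Trofimchuk in \cite{Liz} (take $f(t,u)=\beta(t)u$, so that $0\le\beta(t)\le b$ gives both the Yorke-type sign condition and the bound $|f(t,u)|\le b|u|$). To that extent your reduction coincides with the paper's treatment, and if citing the nonlinear theorem of \cite{Liz} is admissible, the proof is complete at that point.

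The remainder of your proposal, however, attempts to reprove the underlying result of \cite{Liz}, and there it has a genuine gap precisely at the decisive step. Your outline (variation of constants with respect to the non-delay term, splitting into eventually sign-definite versus oscillatory solutions, estimating an extremum $|x(t_{k+1}^*)|$ from the previous zero $z_k$, and noting $t_{k+1}^*-z_k\le h$) is indeed the right skeleton, and you correctly observe that the crude bound $|x(h(u))|\le |x(t_k^*)|$ only yields the Halanay condition $b(1-e^{-\alpha h})<\alpha$. But the refined estimate that replaces it --- controlling the growth of $|x(h(u))|$ from the previous zero via the differential equation itself, and then optimising over the position of the extremum --- is exactly where the constants $e^{-\alpha h}$ and $\ln\frac{b^2+\alpha b}{b^2+\alpha^2}$ come from, and you only assert that the optimisation ``turns out'' to produce the stated inequality, without carrying out the computation. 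Since the entire content of the theorem is this specific transcendental threshold (as opposed to some sufficient smallness condition), asserting it is not a proof. Two smaller points: finiteness of $L=\limsup_{t\to\infty}|x(t)|$ does not follow from the integral representation alone for arbitrary $\alpha,b,h$ and must itself be extracted from the hypothesis (e.g.\ via a preliminary boundedness/contraction step); and in the eventually sign-definite case the linear structure with $\beta\ge0$ gives $\dot x\,\mathrm{sgn}\,x\le-\alpha|x|$ eventually, so no recursion with $bL$ is needed there --- the real work is entirely in the oscillatory case you left unfinished.
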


In \cite{Liz} and several other papers by  Ivanov, Liz, Trofimchuk, Theorem~\ref{th21} was applied to obtain new asymptotic properties for many mathematical models of population dynamics. 
In \cite{BB2015}, Theorem~\ref{th21} was generalized to an equation with several delay terms and measurable parameters.
This result will be formulated in the next section.

Next, proceed to the study of a model containing a non-delay term and one delay term
\begin{equation}\label{27b}
\dot{x}(t)+\alpha(t)x(t)+\beta(t)x(h(t))=0 
\end{equation}
with $\beta(t)\geq 0$.

The test of \cite[Theorem 2.1]{Zhang} gives sharp AS conditions.

\begin{theorem}\cite{Zhang}\label{th24a}
Let
$$
 \liminf_{t\rightarrow\infty}\int_0^t a(s)ds>-\infty,~
 \int_0^{\infty} e^{-\int_s^t a(u)du}b(s)ds\leq \alpha<1.
 $$
 Then the condition $\displaystyle  \lim_{t\rightarrow\infty} \int_0^t a(s)ds=\infty$ is necessary and sufficient for \eqref{27b}  to be AS.
 \end{theorem}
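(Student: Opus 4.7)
The plan is to work with the variation-of-constants representation of \eqref{27b}, treating the non-delay term $a(t)x(t)$ as the principal part. Multiplying \eqref{27b} by $e^{\int_0^t a(u)du}$ and integrating from $0$ to $t$ yields the integral equation
$$
x(t) = x(0)\,e^{-\int_0^t a(u)du} - \int_0^t e^{-\int_s^t a(u)du}\, b(s)\, x(h(s))\, ds,
$$
and the whole argument is driven by the contraction-type structure provided by the standing hypothesis that the kernel integral is bounded by $\alpha<1$.

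For sufficiency, assume $\int_0^t a(u)du\to\infty$. First I would show every solution is bounded: setting $X(t)=\sup_{s\le t}|x(s)|$, the representation together with $\liminf\int_0^t a(u)du>-\infty$ and the kernel bound $\alpha$ gives $|x(t)|\le C|x(0)|+\alpha X(t)$, which iteratively forces $X$ to stay finite. To upgrade boundedness to decay I would apply a standard $\limsup$ argument: split the integral over $[0,T]$ and $[T,t]$; the first part is $O(e^{-\int_T^t a(u)du})$ and vanishes as $t\to\infty$ because $\int_0^t a(u)du\to\infty$, while the second is dominated by $\alpha\,\sup_{s\ge h(T)}|x(s)|$. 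Passing to $L=\limsup_{t\to\infty}|x(t)|$ and then letting $T\to\infty$ yields $L\le\alpha L$, so $L=0$.

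For necessity I would argue contrapositively. Suppose $\int_0^t a(u)du\not\to\infty$; combined with $\liminf>-\infty$ this produces a constant $K$ and a sequence $t_k\to\infty$ with $\int_0^{t_k}a(u)du\le K$, so the ``free'' term $x(0)\,e^{-\int_0^t a(u)du}$ is bounded below in modulus along $\{t_k\}$ when $x(0)\neq 0$. I would then exhibit a non-decaying solution via a fixed-point argument in the Banach space of bounded continuous functions on $[0,\infty)$: the operator
$$
(\mathcal{T} y)(t) = x(0)\,e^{-\int_0^t a(u)du} - \int_0^t e^{-\int_s^t a(u)du}\, b(s)\, y(h(s))\, ds
$$
is a strict contraction because of the $\alpha<1$ hypothesis; its unique fixed point $x$ satisfies $|x(t_k)|\ge |x(0)\,e^{-K}|-\alpha\sup|x|$, and a slight refinement along the subsequence shows $x(t_k)\not\to 0$, contradicting AS.

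The main obstacle is the necessity direction: transferring ``the unforced term does not decay along $\{t_k\}$'' into ``the full solution does not decay'' has to be done in a function space compatible with possibly unbounded $a(t)$, so the fixed-point argument must be carried out with a weighted sup-norm or combined with a direct lower estimate of $|x(t_k)|$. The sufficiency half is conceptually easier, but still requires that one invoke the integral hypothesis on the kernel rather than any pointwise bound on $a$ or $b$, since the theorem explicitly allows unbounded coefficients.
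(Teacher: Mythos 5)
Your sufficiency half is essentially sound and follows the same variation-of-constants/contraction structure on which the cited result of Zhang rests (the paper itself quotes the theorem without proof): the a priori bound $\|x\|\le C\|\psi\|/(1-\alpha)$ gives stability as well as boundedness (provided the supremum is taken over the initial segment too, since $h(s)$ may fall below the initial point), and the split of the integral at $T$ together with $b\ge 0$, the kernel bound and $\int_0^t a(u)\,du\to\infty$ gives $L\le \alpha L$, hence $L=0$. That part needs only cosmetic tightening.

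The genuine gap is in the necessity direction, exactly where you flagged "a slight refinement". The estimate $|x(t_k)|\ge |x(0)|e^{-K}-\alpha\sup|x|$ is vacuous: $\sup|x|\ge|x(0)|$ (and can be of size $|x(0)|/(1-\alpha)$) while $e^{-K}$ may be arbitrarily small, so the right-hand side is typically negative, and no choice of function space (weighted or not) repairs this — over the whole interval $[0,t_k]$ the delayed term genuinely can swallow the free term. The missing idea is to use the sequence $t_k$ to shrink the kernel mass that matters: since $b\ge 0$, evaluating the hypothesis at $t=t_k$ gives $\int_0^{t_k}e^{\int_0^s a(u)du}b(s)\,ds\le \alpha e^{K}$, whence $\int_0^{\infty}e^{\int_0^s a(u)du}b(s)\,ds<\infty$. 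One then starts a solution at a late initial point $t^*$ (say with unit initial data), chosen so that the tail $\int_{t^*}^{\infty}e^{\int_0^s a(u)du}b(s)\,ds$ is small compared with $(1-\alpha)e^{-K-2m}$, where $-m$ is the lower bound for $\int_0^t a(u)\,du$ coming from the $\liminf$ hypothesis. In the representation of this solution on $[t^*,t_k]$ the delayed contribution is then at most half of the free term $e^{-\int_{t^*}^{t_k}a(u)du}\ge e^{-K}e^{\int_0^{t^*}a(u)du}\ge e^{-K-m}$, which is bounded away from zero uniformly in $k$; hence $|x(t_k)|\ge \tfrac12 e^{-K-m}>0$, contradicting AS (the paper's definition of AS requires decay of solutions from every initial point, so a late-start solution is admissible). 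Your fixed-point operator only reproduces the solution and by itself carries no non-decay information; without the "convergent weighted kernel integral plus late starting point" step the necessity proof does not close.
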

 
Sometimes, linearised equations for  mathematical models have three terms, for example,
there is a production  term, a mortality term, and a harvesting term, where all three can be delayed. The following result 
is designed for this kind of linear equations.
 
Consider an equation with three bounded delays
\begin{equation}\label{27}
\dot{x}(t)+a(t)x(h_1(t))-b(t)x(h_2(t))-c(t)x(h_3(t))=0,
\end{equation}
in particular, $0\leq t-h_j(t)\leq \tau$, $ \displaystyle \int_0^{\infty}a(s)ds=\infty$, the first term 
has a positive coefficient $a(t)  >   0$  and in some sense dominates over the other two.

\begin{theorem}\cite[Theorem 7]{Tan}\label{th23}
Let 
for some  $\theta \in (0,1)$,  
$$
|b(t)|+|c(t)|<\theta a(t),~ \limsup_{t\rightarrow\infty} \int_{h_1(t)}^t (a(s)+|b(s)|+|c(s)|)ds<1-\theta.
$$
Then equation \eqref{27} is AS. 
\end{theorem}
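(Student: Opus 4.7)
The plan is to rewrite \eqref{27} so that the dominant positive term $a(t)x(t)$ appears as the non-delay part, then treat the remainder by variation-of-constants against the scalar ODE $\dot y+a(t)y=0$, controlling the lagged corrections by the delay-integral hypothesis. First I would rewrite \eqref{27} as
$$
\dot{x}(t)+a(t)x(t)=a(t)\bigl[x(t)-x(h_1(t))\bigr]+b(t)x(h_2(t))+c(t)x(h_3(t)),
$$
and introduce $A(t):=a(t)+|b(t)|+|c(t)|$, $\Phi(t,s):=\exp\bigl(-\int_s^{t}a(u)\,du\bigr)$, $\gamma(t):=\int_{h_1(t)}^{t}A(s)\,ds$. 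The hypotheses yield a constant $\varepsilon>0$ and a time $T_0\ge t_0$ with $\gamma(t)\le 1-\theta-\varepsilon$ for $t\ge T_0$, while $|b(t)|+|c(t)|\le \theta\,a(t)$ throughout.

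Next I would bound the ``delay correction.'' Using the original equation together with the nondecreasing majorant $M(t):=\sup_{u\le t}|x(u)|$, one has $|\dot x(u)|\le A(u)M(u)$ and therefore
$$
|x(s)-x(h_1(s))|\le \gamma(s)\,M(s).
$$
Applying variation-of-constants for $\dot y+a(t)y=0$ on $[T_0,t]$ and substituting gives, for every $t\ge T_0$,
$$
|x(t)|\le \Phi(t,T_0)M(T_0)+M(t)\int_{T_0}^{t}\Phi(t,s)\bigl[a(s)\gamma(s)+|b(s)|+|c(s)|\bigr]\,ds.
$$
The combination of both hypotheses now produces the pointwise bound $a(s)\gamma(s)+|b(s)|+|c(s)|\le (1-\theta-\varepsilon)a(s)+\theta a(s)=(1-\varepsilon)a(s)$, and since $\int_{T_0}^{t}\Phi(t,s)a(s)\,ds=1-\Phi(t,T_0)\le 1$, this simplifies to $|x(t)|\le \Phi(t,T_0)M(T_0)+(1-\varepsilon)M(t)$. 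Choosing $t^\ast\in[T_0,t]$ where $M(t)$ is attained (if $M(t)>M(T_0)$) and applying the estimate at $t=t^\ast$ yields $\varepsilon M(t)\le M(T_0)$, so $M$ is uniformly bounded; this gives Lyapunov stability.

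To upgrade boundedness to $x(t)\to 0$, I would iterate. Let $L:=\limsup_{t\to\infty}|x(t)|$; given $\epsilon_1>0$, pick $T$ so that $|x(v)|\le L+\epsilon_1$ for $v\ge T$, and (using $h_j(t)\to\infty$) choose $T_1$ such that $h_1(u)\ge T$ for every $u$ appearing in the subsequent integrals whenever $s\ge T_1$. The derivative bound then refines to $|x(s)-x(h_1(s))|\le (L+\epsilon_1)\gamma(s)$, and the same variation-of-constants computation on $[T_1,t]$ gives
$$
|x(t)|\le \Phi(t,T_1)|x(T_1)|+(L+\epsilon_1)(1-\varepsilon)\bigl(1-\Phi(t,T_1)\bigr).
$$
Letting $t\to\infty$ (using $\int^{\infty}a(s)\,ds=\infty$, hence $\Phi(t,T_1)\to 0$) and then $\epsilon_1\to 0$ yields $L\le (1-\varepsilon)L$, i.e.\ $L=0$. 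Combined with Lyapunov stability, this is AS in the sense of Definition~\ref{def1}.

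The main obstacle is the coupling step, where the two hypotheses conspire to produce the crucial gap $a(s)\gamma(s)+|b(s)|+|c(s)|\le (1-\varepsilon)a(s)$: the budget $(1-\theta)$ in the integral condition absorbs the ``delay correction,'' while the budget $\theta$ absorbs the $|b|+|c|$ perturbation. Without both strict inequalities no uniform gap $\varepsilon>0$ emerges. A secondary technical point is closing the running-supremum bound on $|\dot x|$ on itself; this is why $M(t)=\sup_{u\le t}|x(u)|$ is the right quantity in the first (boundedness) pass, while only in the second pass can $M$ be replaced by the tighter constant $L+\epsilon_1$.
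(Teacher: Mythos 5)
Your proof appears correct, but note that the paper itself contains no proof of Theorem~\ref{th23}: it is quoted from Taniguchi \cite{Tan}, where it is established by the Lyapunov--Razumikhin method, so your route is genuinely different from the source. You add and subtract $a(t)x(t)$, solve the resulting ODE part by variation of constants, absorb the lag correction $a(s)\int_{h_1(s)}^{s}\dot x(u)\,du$ into the running supremum $M$, and then iterate on $L=\limsup_{t\to\infty}|x(t)|$; this is a Halanay-type argument, much closer in spirit to the a priori estimation technique of Section~\ref{sec:apriori} (compare the decomposition \eqref{8}) and to Theorems~\ref{th24a} and \ref{th23c} than to Razumikhin functionals. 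The key coupling $a(s)\gamma(s)+|b(s)|+|c(s)|\le(1-\theta-\varepsilon)a(s)+\theta a(s)$, together with $\int_{T_0}^{t}\Phi(t,s)a(s)\,ds=1-\Phi(t,T_0)$ and the standing assumption $\int^{\infty}a(s)\,ds=\infty$, is exactly what closes both passes, and the two-pass structure (boundedness first, then $L\le(1-\varepsilon)L$, hence $L=0$) is sound. What Razumikhin buys in \cite{Tan} is a framework covering rather general nonautonomous functional differential equations; what your computation buys is an elementary, self-contained argument using only the bounded-delay and sign assumptions already imposed on \eqref{27}. Three routine points should be made explicit: choose $T_0\ge t_0+\tau$ so that $\int_{h_1(s)}^{s}\dot x(u)\,du$ never reaches into the initial segment, where $x$ need not satisfy the equation; to pass from the bound $M(t)\le M(T_0)/\varepsilon$ to Lyapunov stability you still need the Gronwall estimate $M(T_0)\le \|\psi\|\exp\bigl(\int_{t_0}^{T_0}A(s)\,ds\bigr)$ on the finite interval $[t_0,T_0]$ (the coefficients are continuous, hence locally integrable, so this is immediate); and in the second pass take $T_1\ge T+2\tau$ so that $h_j(u)\ge T$ for every argument $u$ occurring in the inner integrals, which is what your phrase ``for every $u$ appearing in the subsequent integrals'' tacitly requires.
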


 In \cite{GH2001},   the authors considered equation \eqref{2} 
with constant coefficients $a_j(t)\equiv a_i$ and piecewise continuous delays $h_j(t)$.
Denote $$a=\sum_{j=1}^n a_j,  ~\Phi(\tau)=\int_0^{\infty} X(t,\tau)dt,$$ where $X(t,\tau)$  is the fundamental function of the
equation $\dot{x}(t)+x(t-\tau)=0, ~0<\tau<\frac{\pi}{2}$. If $\tau\leq \frac{1}{e}$ then $\Phi(\tau)=1$.
For $\tau\in (\frac{1}{e}, \frac{\pi}{2})$, numerical estimations for $\Phi(\tau)$   are  presented
in \cite[Table 1]{GH2001}.

\begin{theorem}\cite[Theorem 3.2]{GH2001}  \label{th23a}
Suppose $a_i>0$ are constants, 
and there exists a number $\tau  \in [0,\frac{\pi}{2a}]$ such that
$$
\tau a-\frac{1}{\Phi(\tau a)}< \sum_{j=1}^n a_j\liminf_{s \rightarrow\infty}(s-h_j(s))
\leq \sum_{j=1}^n a_j\limsup_{s \rightarrow\infty}(s-h_j(s)) <\tau a+\frac{1}{\Phi(\tau a)}  \, .
$$
Then equation \eqref{2} with constant coefficients is AS.
\end{theorem}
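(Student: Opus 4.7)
The plan is to view equation \eqref{2} with constant coefficients as a perturbation of the autonomous single-delay equation
\begin{equation*}
\dot{y}(t)+a\,y(t-\tau)=0,
\end{equation*}
where $\tau\in[0,\pi/(2a)]$ is the parameter supplied by the hypothesis. Adding and subtracting $a\,x(t-\tau)$ in \eqref{2} yields
\begin{equation*}
\dot{x}(t)+a\,x(t-\tau)=\sum_{j=1}^{n}a_j\bigl[x(t-\tau)-x(h_j(t))\bigr].
\end{equation*}
Since $a\tau\in[0,\pi/2]$, the unperturbed equation is uniformly exponentially stable; the rescaling $s\mapsto s/a$ matches its fundamental function $\widetilde{X}$ with the function $X(\cdot,a\tau)$ used to define $\Phi$, and one checks that $\int_{s}^{\infty}\widetilde{X}(t,s)\,dt=\Phi(a\tau)/a$.

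The next step is to pass to the variation-of-constants representation
\begin{equation*}
x(t)=x_{0}(t)+\int_{t_{0}}^{t}\widetilde{X}(t,s)\sum_{j=1}^{n}a_j\bigl[x(s-\tau)-x(h_j(s))\bigr]\,ds,
\end{equation*}
where $x_{0}$ collects the initial-data contributions and decays exponentially, and to apply a Bohl-Perron / fixed-point argument in $L^{\infty}([t_{1},\infty))$ for a sufficiently large $t_{1}$. AS of \eqref{2} will follow once the integral operator on the right is shown to be an eventual strict contraction. The decisive estimate uses $x(t-\tau)-x(h_j(t))=\int_{h_j(t)}^{t-\tau}\dot{x}(u)\,du$ together with $\dot{x}(u)=-\sum_{k}a_k\,x(h_k(u))$ from \eqref{2}, which writes the perturbation as a double integral whose leading contribution, after approximating $x(h_k(u))$ by $x(t)$, equals
\begin{equation*}
-a\,x(t)\Bigl[\sum_{j=1}^{n}a_j\,(t-h_j(t))-a\tau\Bigr].
\end{equation*}
Combining this with the $L^{1}$ bound on $\widetilde{X}$ produces a contraction constant essentially $\Phi(a\tau)\cdot\limsup_{t\to\infty}\bigl|\sum_{j=1}^{n}a_j(t-h_j(t))-a\tau\bigr|$, which is strictly less than $1$ by hypothesis.

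The main obstacle is to preserve the \emph{signed} quantity $\sum_{j}a_j(t-h_j(t))-a\tau$ throughout the estimate: the naive triangle-inequality bound produces the strictly larger $\sum_{j}a_j|t-\tau-h_j(t)|$, which would not match the hypothesis. Overcoming this requires the iterated substitution of $\dot{x}$ above, so that the ``displacements'' $t-\tau-h_j(t)$ enter linearly and their weighted sum produces $a\tau-\sum_{j}a_j(t-h_j(t))$ with its correct sign, up to a second-order remainder arising from freezing $x(h_k(u))\approx x(t)$. A short bootstrap — first bound $\|x\|_{L^\infty}$ on finite windows, then refine using the equation — shows this remainder is negligible in the contraction, and the Bohl-Perron machinery then delivers AS, completing the proof.
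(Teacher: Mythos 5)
This theorem is quoted in the survey part of the paper from \cite[Theorem 3.2]{GH2001}; the paper itself contains no proof of it, so your proposal can only be judged on its own. Your general framework (rewrite \eqref{2} as a perturbation of $\dot y(t)+a\,y(t-\tau)=0$, use variation of constants with the rescaled fundamental function, note $\int_s^\infty|\widetilde X(t,s)|\,dt=\Phi(a\tau)/a$, and close via a Bohl--Perron/contraction argument) is the natural one and is in the spirit of the delay-perturbation technique of Győri--Hartung. Two small slips: $\Phi$ must be the integral of $|X(\cdot,a\tau)|$ (the integral of $X$ itself equals $1$ for every $a\tau<\pi/2$), and at the endpoint $a\tau=\pi/2$ the comparison equation is not exponentially stable --- that case is vacuous since $1/\Phi(\pi/2)=0$, so it should be excluded explicitly rather than asserted to be UES.

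The genuine gap is exactly at the step you flag as ``decisive.'' After substituting $\dot x(u)=-\sum_k a_k x(h_k(u))$ and freezing $x(h_k(u))\approx x(t)$, the remainder is
$R(t)=-\sum_j a_j\int_{h_j(t)}^{t-\tau}\sum_k a_k\bigl[x(h_k(u))-x(t)\bigr]du$, and the only available bound for it is of the form
$\sum_j a_j|t-\tau-h_j(t)|\cdot a\cdot\sup_k\sup_u|t-h_k(u)|\cdot\|x\|$. The hypothesis controls only the \emph{signed} weighted sum $\sum_j a_j(t-h_j(t))-a\tau$; it does not make the individual deviations $|t-\tau-h_j(t)|$ small, so $R$ is first order in $\|x\|$ with a coefficient that is not small, and the claimed ``second-order remainder, negligible after a short bootstrap'' is unsubstantiated. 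Concretely, take $n=2$, $a_1=a_2=1$, $h_1(t)=t$, $h_2(t)=t-2\tau$ with $2\tau$ around $1.2$: the signed sum equals $a\tau$ exactly, so the hypothesis holds with room to spare and your ``leading term'' vanishes identically, yet $\sum_j a_j|t-\tau-h_j(t)|=2\tau$ and the factor $\sup|t-h_k(u)|$ is of order $2\tau$ as well, so the estimate for $\Phi(a\tau)\sup|R|/(a\|x\|)$ exceeds $1$ and no contraction is obtained --- precisely in an example the theorem is supposed to cover. To prove the signed (absolute-value-outside-the-sum) version one needs a different mechanism for exploiting cancellation between the terms $j$ than freezing the solution at a single point; as written, your argument only yields the weaker condition with $\sum_j a_j|t-\tau-h_j(t)|$ inside, which you correctly identified as insufficient.
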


In \cite{TangZou}, the equation with a non-delay term 
\begin{equation}\label{27d}
\dot{x}(t)=-a x(t)+b(t)x(t-\tau)
\end{equation}
(together with more general nonlinear equations) was considered.
As a corollary of  \cite[Theorem~2.5]{TangZou}, we outline a test for \eqref{27d}.

\begin{theorem}\cite{TangZou}\label{th23c}
Let $a>0$, $0\leq b(t)\leq \beta$ and for some $D>0$
$$
\int_{t-\tau}^t e^{-a(t-s)}b(s)ds \leq D<1+\frac{1}{2}\left(1+\frac{a^2}{\beta^2}\right)e^{-a\tau}.
$$
Then equation \eqref{27d} is UAS.
\end{theorem}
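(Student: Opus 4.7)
The plan is to prove UAS by converting \eqref{27d} into an integral equation via the variation of constants formula with respect to the non-delay diagonal $-a$, and then using a one-step unfolding of the delayed argument to recover the extra tolerance $\tfrac{1}{2}(1+a^{2}/\beta^{2})e^{-a\tau}$ beyond the trivial threshold $D=1$.

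First, I would write
$$
x(t)=e^{-a(t-t_0)}x(t_0)+\int_{t_0}^{t}e^{-a(t-s)}b(s)\,x(s-\tau)\,ds,
$$
which is the natural representation in which the hypothesis $\int_{t-\tau}^{t} e^{-a(t-s)}b(s)\,ds\leq D$ enters. The naive estimate $|x(s-\tau)|\leq \sup|x|$ combined with this bound only yields stability for $D<1$, so a refinement is required. To obtain it, I would exploit the equation itself and the identity $x(s-\tau)=x(s)-\int_{s-\tau}^{s}\dot{x}(r)\,dr$ to write
$$
x(s-\tau)=x(s)+a\int_{s-\tau}^{s}x(r)\,dr-\int_{s-\tau}^{s}b(r)\,x(r-\tau)\,dr,
$$
and substitute this expression back. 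After rearrangement, the principal term is $b(s)x(s)$, contributing an integrand weighted by $\int_{t-\tau}^{t}e^{-a(t-s)}b(s)\,ds\leq D$, while the two double-integral remainders carry the products $a\,b(s)$ and $b(s)b(r)$.

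The heart of the argument is then the estimation of these remainders. Here I would apply the elementary inequality $2|pq|\leq \kappa\,p^{2}+\kappa^{-1}q^{2}$ with the optimal weight $\kappa=a/\beta$ to the cross products $x(r)\cdot x(r-\tau)$ arising after using $b\leq\beta$; this choice is what produces the combination $\tfrac{1}{2}(1+a^{2}/\beta^{2})$. Simultaneously, writing $e^{-a(t-s)}=e^{-a(t-r)}e^{-a(r-s)}$ for $r\in[s-\tau,s]$ and absorbing the inner exponential into the length of the delay window factors out precisely $e^{-a\tau}$. Gathering terms yields, for $M_\infty:=\limsup_{t\to\infty}|x(t)|$, an inequality
$$
M_\infty\Bigl[\,1-D+\tfrac{1}{2}(1+a^{2}/\beta^{2})e^{-a\tau}\,\Bigr]\leq 0,
$$
which forces $M_\infty=0$ by the hypothesis; boundedness on $[t_0,\infty)$ is obtained en route from the same scheme. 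Uniformity in $t_0$ and in $\psi$ is automatic because every constant appearing depends only on $a,\beta,\tau,D$ and $\sup|\psi|$, and the exponential rate is inherited from the diagonal decay $e^{-a(t-t_0)}$.

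The main obstacle is the bookkeeping in the unfolding step: verifying that the AM--GM parameter $\kappa=a/\beta$ is genuinely optimal and that the exponential weights reassemble into a single factor $e^{-a\tau}$ multiplying exactly $\tfrac{1}{2}(1+a^{2}/\beta^{2})$ rather than a weaker coefficient. Once that identification is made, passage from the contractive inequality on $M_\infty$ to uniform asymptotic stability is routine and essentially automatic from the structure of \eqref{27d}.
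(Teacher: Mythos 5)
The paper does not prove this statement itself; it quotes it as a corollary of Theorem 2.5 of \cite{TangZou}, whose proof is a Yorke/Razumikhin-type ``$3/2$-style'' analysis that treats non-oscillatory solutions separately and, for oscillatory solutions, exploits the signs of $b(t)\ge 0$ and of $x$ between zeros/extrema to get cancellation in integral estimates over the delay window. Your scheme is not a rearrangement of that argument, and as written it has a fatal gap. First, after substituting the unfolded expression for $x(s-\tau)$ into the variation-of-constants formula, the principal term is $\int_{t_0}^{t}e^{-a(t-s)}b(s)x(s)\,ds$, whose weight is $\int_{t_0}^{t}e^{-a(t-s)}b(s)\,ds$, \emph{not} $\int_{t-\tau}^{t}e^{-a(t-s)}b(s)\,ds$; the hypothesis only controls a window of length $\tau$, and over $[t_0,t]$ this weight can be as large as roughly $D/(1-e^{-a\tau})$ (or $\beta/a$), so it is not bounded by $D$. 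Second, the announced ``heart of the argument'' cannot be carried out: the equation and its integral representation are linear in $x$, so no products $x(r)\,x(r-\tau)$ ever appear, and the inequality $2|pq|\le\kappa p^{2}+\kappa^{-1}q^{2}$ with $\kappa=a/\beta$ has nothing to act on; the factor $a^{2}/\beta^{2}$ in the theorem does not arise from such an optimization.

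Third, and most importantly, the final inequality $M_\infty\bigl[1-D+\tfrac12(1+a^{2}/\beta^{2})e^{-a\tau}\bigr]\le 0$ requires the remainder (double-integral) terms to enter with a \emph{negative} coefficient of exactly that size. With absolute-value estimates every unfolded term adds to the bound, so your scheme can only produce $M_\infty\le C\,M_\infty$ with $C$ at least as large as the naive constant — it cannot cross the threshold $1$, let alone reach $1+\tfrac12(1+a^{2}/\beta^{2})e^{-a\tau}$. The gain above $1$ in \cite{TangZou} comes precisely from sign information (e.g., at a suitable extremum $t^{*}$ one has $a\,x(t^{*})\le b(t^{*})x(t^{*}-\tau)$, and the subtracted term $\int b(r)x(r-\tau)\,dr$ is genuinely positive on the intervals used), together with a separate argument for non-oscillatory solutions; none of this is present in your proposal, so the contraction on $M_\infty$ is not established.
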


\section{Stability tests for equations with measurable parameters}
\label{sec:other}

Next, we consider more general models, where both coefficients and delays are not necessarily continuous.

Up to the end of the paper, we assume that all the parameters of equation \eqref{2} (the coefficients and the delays $t-h_j (t) \geq 0$) are Lebesgue measurable
and essentially bounded on $[t_0,\infty)$, apart from the initial function $\psi(t)$ which is Borel measurable and bounded.
The same hypotheses are implied on all the special forms of  \eqref{2}. 
%
%
%

We consider measurable functions (coefficients, delayed arguments, solution derivatives) in the space ${\mathbf L}_{\infty}[t_0,\infty)$ of Lebesgue measurable functions $y: [t_0,\infty) \to {\mathbb R}$ with the essential supremum norm $\| y \|={\rm ess}\sup_{s\geq t_0} |y(s)|$. We will also consider the space ${\mathbf L}_{\infty}[a,b]$ of   $y: [a,b]\to {\mathbb R}$ with  $\| y \|_{[a,b]}={\rm ess}\sup_{s \in [a,b]} |x(s)|$.

Note that initial value problem  \eqref{2}, \eqref{2A} has a unique solution, 
see \cite{AS, BB2006a, Gil3, Hale}.

Together with  homogeneous equation \eqref{2}, we consider its non-homogeneous counterpart with the zero initial function
\begin{equation}\label{5}
\dot{x}(t)+ \sum_{j=1}^n a_j(t)x(h_j(t))=f(t), \quad t\geq t_0, \quad x(t)=0, \quad t\leq t_0,
\end{equation}
where $\|f\|<\infty$, and the norm is defined above. 

For the proofs of all the theorems in this section,  the Bohl-Perron result (Lemma~\ref{lemma1}) is the main tool.
 
\begin{lemma}\cite{AS,Gil3}\label{lemma1}
If for any bounded $f$,  
a  solution to  \eqref{5} is also a bounded function on  $[t_0,\infty)$.
then \eqref{2} is UES.
\end{lemma}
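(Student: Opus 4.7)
The plan is a three-step argument: convert the hypothesis into an integral bound on the fundamental solution via functional analysis, uniformize the bound in the initial time, and upgrade integrability to exponential decay.

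First, by the variation of parameters formula the solution of \eqref{5} is $x(t) = \int_{t_0}^t X(t,s)\,f(s)\,ds$, so the hypothesis says that the linear operator $T : \mathbf{L}_\infty[t_0,\infty) \to \mathbf{L}_\infty[t_0,\infty)$ with $Tf = x$ is well-defined. One verifies $T$ is closed directly from its integral representation (passing $f_n \to f$ and $Tf_n \to g$ through the kernel), so the closed graph theorem provides $K > 0$ with $\|Tf\|_\infty \leq K\|f\|_\infty$. Fixing $T > t_0$ and testing with $f(s) = \operatorname{sgn} X(T,s)\,\chi_{[t_0,T]}(s)$ (which is measurable with $\|f\|_\infty \leq 1$), and then evaluating $Tf$ at $t = T$, yields $\int_{t_0}^T |X(T,s)|\,ds \leq K$. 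Because the essential bounds on the $a_j$ and $t-h_j(t)$ are global on $[t_0,\infty)$, the same argument applied with an arbitrary later initial time $s_0 \geq t_0$ (extending any $\tilde f \in \mathbf{L}_\infty[s_0,\infty)$ by zero on $[t_0,s_0)$) reproduces the same $K$, giving the uniform bound $\int_s^t |X(t,\tau)|\,d\tau \leq K$ for every $s \leq t$. A Gronwall step on the identity $X(t,s) = 1 - \sum_j \int_s^t a_j(\tau)\,X(h_j(\tau), s)\,d\tau$ combined with $\|a_j\|_\infty < \infty$ and boundedness of the delays also delivers a uniform pointwise bound $|X(t,s)| \leq M_0$.

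The main obstacle is the third step: upgrading integrability and boundedness of $X$ to the exponential estimate $|X(t,s)| \leq \mathcal{M} e^{-\nu(t-s)}$. My plan is to introduce the shifted kernel $Y(t,s) := e^{\nu(t-s)}\,X(t,s)$, which is the fundamental solution of a delay equation whose coefficients differ from those of \eqref{2} by terms of order $\nu e^{\nu(t-h_j(t))}$; essential boundedness of $t-h_j(t)$ makes this perturbation uniformly small for small $\nu > 0$. By a standard perturbation argument, the shifted equation inherits the bounded-input-bounded-output property with a nearby constant $K'$, so the first-step argument applied to $Y$ yields $\int_s^t |Y(t,\tau)|\,d\tau \leq K'$ uniformly, and one more pass through the integral equation then upgrades this to a uniform pointwise bound $|Y(t,s)| \leq \mathcal{M}$. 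Unwinding the definition of $Y$ produces the desired exponential decay of $X$, which by the equivalent formulation in terms of the fundamental function is exactly UES of \eqref{2}. The delicate point throughout is verifying that small exponential weighting preserves the bounded-input-bounded-output hypothesis; this is where essential boundedness of the delays enters in a nontrivial way.
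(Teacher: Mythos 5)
Your overall architecture (closed graph theorem to bound the Cauchy operator, the test functions $f(s)=\operatorname{sgn}X(T,s)\chi_{[t_0,T]}(s)$ giving $\int_{t_0}^T|X(T,s)|\,ds\le K$, and then the exponential weight $Y(t,s)=e^{\nu(t-s)}X(t,s)$ handled by a smallness/Neumann-series perturbation argument) is exactly the classical Bohl--Perron route; the paper itself does not prove Lemma~\ref{lemma1} but cites \cite{AS,Gil3}, where this is the proof scheme. The closed-graph step, the extension-by-zero uniformization (which is in fact immediate, since the integrand is nonnegative), and the perturbation step for small $\nu$ are all sound as sketched.

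There is, however, one genuine gap, and it occurs twice: the claim that ``a Gronwall step on $X(t,s)=1-\sum_j\int_s^t a_j(\tau)X(h_j(\tau),s)\,d\tau$ combined with $\|a_j\|_\infty<\infty$ and boundedness of the delays delivers a uniform pointwise bound $|X(t,s)|\le M_0$,'' and its reprise ``one more pass through the integral equation upgrades $\int_s^t|Y(t,\tau)|\,d\tau\le K'$ to $|Y(t,s)|\le\mathcal{M}$.'' Gronwall together with bounded coefficients and bounded delays can only give the growth estimate $|X(t,s)|\le e^{\|A\|(t-s)}$, and no uniform bound can follow from those hypotheses alone: the equation $\dot x(t)-x(t)=0$ has a bounded coefficient and zero delay, yet $X(t,s)=e^{t-s}$. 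So the pointwise bound must use the integral bound $\int_s^t|X(t,\tau)|\,d\tau\le K$ in an essential way, and passing from that integral bound (over the \emph{second} argument) to a pointwise bound (governed by the equation in the \emph{first} argument) is precisely where the real work of the theorem sits; asserting it in one line leaves the final step of your proof---and hence UES itself---unproved. A correct mechanism is, for $\zeta\in[s,s+1]$, to regard $t\mapsto X(t,s)$ on $[\zeta,\infty)$ as a solution with initial value $X(\zeta,s)$ and prehistory $X(\cdot,s)$, apply the variation-of-constants formula $X(t,s)=X(t,\zeta)X(\zeta,s)-\int_\zeta^t X(t,u)\sum_j a_j(u)X(h_j(u),s)\chi_{\{h_j(u)<\zeta\}}\,du$, bound the prehistory terms by the local Gronwall estimate and the integral bound, and then average over $\zeta\in[s,s+1]$ to obtain $|X(t,s)|\le e^{\|A\|}K(1+\|A\|)$ for $t\ge s+1$; the same argument applied to the shifted equation closes your final step. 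With that insertion your proof is complete and coincides in substance with the cited classical argument.
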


There is a close connection between the nonoscillatory behaviour and UES for functional differential equations. 
For equation \eqref{2} with nonnegative coefficients, nonoscillation which is interpreted as existence of an  eventually positive solution,
is equivalent to positivity of the fundamental function to  \eqref{2}.
Denote the sum of the coefficients as
\begin{equation}
\label{A_definition}
A(t)=\sum_{j=1}^n a_j(t).
\end{equation}

\begin{lemma}\cite[P. 52]{BBbook}\label{lemma2}
If $a_j(t)\geq 0$ and  the sum of the coefficients defined in \eqref{A_definition} is separated from zero $A(t)\geq a_0>0$ then positivity of the fundamental function to
\eqref{2}  implies that equation \eqref{2} is UES.
\end{lemma}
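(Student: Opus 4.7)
The plan is to invoke the Bohl--Perron principle (Lemma~\ref{lemma1}): it suffices to exhibit, for every $f \in {\mathbf L}_{\infty}[t_0,\infty)$, a uniform bound on the solution $x$ of~\eqref{5}. Since $X(t,s) \ge 0$ by hypothesis, the variation-of-constants representation $x(t)=\int_{t_0}^t X(t,s) f(s)\,ds$ gives the pointwise estimate $|x(t)| \le \|f\| \cdot I(t)$, where $I(t) := \int_{t_0}^t X(t,s)\, ds \ge 0$. Hence everything reduces to a uniform bound on the auxiliary scalar $I$.

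The function $I$ itself solves~\eqref{5} with the specific forcing $f \equiv 1$: one has $\dot I(t) + \sum_{j=1}^n a_j(t) I(h_j(t)) = 1$ a.e., with $I \equiv 0$ on $(-\infty,t_0]$. Since $I \ge 0$ and $a_j \ge 0$, the equation immediately yields $\dot I \le 1$ a.e., so $I$ is $1$-Lipschitz from above. Writing $\tau^* := \esssup_{t,j}(t - h_j(t)) < \infty$ (finite by the standing assumption that the delays are essentially bounded), this Lipschitz property translates into the pointwise delayed-value estimate $I(h_j(t)) \ge I(t) - \tau^*$.

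I would then show $I(t) \le C_0 := 1/a_0 + \tau^*$. At a.e.\ $t$ with $I(t) > C_0$, the estimate just derived combined with $A(t) \ge a_0$ gives
\[
\sum_{j=1}^n a_j(t) I(h_j(t)) \ge a_0\bigl(I(t) - \tau^*\bigr) > 1,
\]
so $\dot I(t) < 0$. Consequently $I$ is strictly decreasing on the open super-level set $\{I > C_0\}$; together with $I(t_0) = 0$ this precludes $I$ from ever exceeding $C_0$ (taking the last time $t^{**}$ at which $I = C_0$ before an alleged violation point $t^*$ and integrating $\dot I$ on $(t^{**}, t^*]$ forces $I(t^*) < I(t^{**})$, contradicting $I(t^*) > C_0 \ge I(t^{**})$). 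The uniform bound $|x(t)| \le C_0 \|f\|$ then holds, and Lemma~\ref{lemma1} delivers UES.

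The principal technical subtlety is the last paragraph: converting the pointwise inequality $\dot I < 0$ on $\{I > C_0\}$ into the global estimate $I \le C_0$ requires a careful continuity/monotonicity argument on the super-level set together with the absolute continuity of $I$. This is where the hypothesis $A(t) \ge a_0 > 0$ (rather than merely $A \ge 0$) becomes essential: it provides the strict slack $a_0(I - \tau^*) - 1 > 0$ that forces $I$ back below the threshold as soon as it tries to exceed it.
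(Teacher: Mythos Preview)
Your argument is correct. Note that the paper itself does not prove Lemma~\ref{lemma2}; it is merely quoted from \cite{BBbook}, so there is no in-paper proof to compare against. Your route---apply Bohl--Perron (Lemma~\ref{lemma1}), reduce via the variation-of-constants formula and positivity of $X(t,s)$ to bounding the single function $I(t)=\int_{t_0}^t X(t,s)\,ds$, then use $\dot I\le 1$ and $A(t)\ge a_0$ to trap $I$ below $C_0=1/a_0+\tau^*$---is a standard and clean way to obtain the result, and the final continuity/maximality argument for the super-level set is fine once $I$ is recognised as absolutely continuous. One cosmetic point: the inequality $I(h_j(t))\ge I(t)-\tau^*$ also covers the case $h_j(t)<t_0$, since then $I(h_j(t))=0$ while $I(t)\le t-t_0<\tau^*$ by the same Lipschitz bound; you may want to say this explicitly.
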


Condition $A(t)\geq a_0>0$ can be replaced by a more general condition: 
\\
there is ${\mathcal R}>0$ such that $\displaystyle \liminf\limits_{t\rightarrow\infty} \int_t^{t+{\mathcal R}} A(\zeta)d\zeta>0$, 
\\
which is equivalent to UES of the
ordinary differential equation
$$
\dot{x}(t)+A(t)x(t)=0.
$$

Let us outline some explicit conditions under which  the fundamental function of \eqref{2} is positive.
Many other results can be found in the monograph \cite{BBbook}.

\begin{lemma}\cite{BB2006b, BB2007}\label{lemma34}
Suppose $0\leq a_j(t)\leq \alpha_j$, $t-h_j(t)\leq \tau_j$, $A$ is defined in \eqref{A_definition}, and either
\vspace{2mm} \\
1) $\displaystyle \int_{\min_j \{h_j(t)\}}^t A(s)ds\leq \frac{1}{e}$
\vspace{2mm} \\
or  \vspace{2mm}
\\
2) there exists $\lambda>0$ for which the inequality  $\displaystyle \lambda\geq \sum_{j=1}^n \alpha_j e^{\lambda \tau_j}$ holds.

Then, \eqref{2} possesses a positive solution; moreover, it has a positive  fundamental function. 
\end{lemma}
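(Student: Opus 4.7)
The plan is to use the well-known equivalence (see \cite{BBbook}) that, for an equation of the form \eqref{2} with nonnegative coefficients, positivity of the fundamental function $X(t,s)$ on $\{t\geq s\geq t_0\}$ is equivalent to the existence of a nonnegative function $u\in{\mathbf L}_{\infty}^{\mathrm{loc}}[t_0,\infty)$ satisfying the generalized characteristic inequality
\begin{equation*}
u(t)\;\geq\;\sum_{j=1}^n a_j(t)\exp\!\left(\int_{h_j(t)}^t u(s)\,ds\right)\qquad \text{for a.e. } t\geq t_0.
\end{equation*}
The motivation for this criterion is the substitution $x(t)=\exp\!\left(-\int_{t_0}^t u(s)\,ds\right)$: if $u$ obeys the above inequality then $x$ is a positive supersolution of \eqref{2}, and a standard comparison argument (via the integral representation of solutions of the non-homogeneous problem) yields $X(t,s)>0$. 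Granted this equivalence, the lemma reduces to exhibiting such a $u$ in each of the two cases.

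For hypothesis~2, the natural candidate is the constant $u(t)\equiv \lambda$. A direct computation gives
\begin{equation*}
\sum_{j=1}^n a_j(t)\exp\!\left(\int_{h_j(t)}^t \lambda\,ds\right)=\sum_{j=1}^n a_j(t)\,e^{\lambda(t-h_j(t))}\leq \sum_{j=1}^n \alpha_j e^{\lambda\tau_j}\leq \lambda,
\end{equation*}
so the inequality is satisfied a.e.

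For hypothesis~1, I would try $u(t)=e\,A(t)$, which is nonnegative and essentially bounded. Since $h_j(t)\geq \min_k h_k(t)$ for every $j$, one has
\begin{equation*}
\int_{h_j(t)}^t u(s)\,ds=e\int_{h_j(t)}^t A(s)\,ds\leq e\int_{\min_k h_k(t)}^t A(s)\,ds\leq e\cdot \frac{1}{e}=1.
\end{equation*}
Hence $\exp\!\left(\int_{h_j(t)}^t u(s)\,ds\right)\leq e$, and summing yields
\begin{equation*}
\sum_{j=1}^n a_j(t)\exp\!\left(\int_{h_j(t)}^t u(s)\,ds\right)\leq e\sum_{j=1}^n a_j(t)=e\,A(t)=u(t),
\end{equation*}
which is exactly the generalized characteristic inequality.

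The genuinely nontrivial ingredient is the equivalence between existence of such a $u$ and positivity of $X(t,s)$, which I intend to quote from the monograph \cite{BBbook}; the verifications above are then direct substitutions. In a self-contained treatment, the main obstacle would be proving the ``$u\Rightarrow X>0$'' direction: one must show that the positive supersolution produced by $u$ dominates the solution starting from $x(s)=1$, $x(t)=0$ for $t<s$, in order to conclude $X(t,s)>0$ on the whole half-line $t\geq s$. The reverse direction is the easier one, obtained by setting $u(t)=-\dot x(t)/x(t)$ for any positive solution~$x$.
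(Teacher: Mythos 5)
Your proposal is correct and follows essentially the same route as the cited sources (the paper itself states Lemma~\ref{lemma34} without proof, quoting \cite{BB2006b,BB2007}, where the argument is exactly this: invoke the equivalence between positivity of the fundamental function and solvability of the generalized characteristic inequality, then verify it with $u\equiv\lambda$ for hypothesis~2 and $u=eA$ for hypothesis~1). Both substitution checks are accurate, so apart from standard technicalities on the initial interval there is nothing to add.
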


All the stability tests in this section are divided into three groups: 
\begin{enumerate}
\item
equations like \eqref{2}  where the number of delays is not prescribed, 
\item
models  with two terms, both  delayed, 
\item
equations with at least two terms, where one of the terms is not delayed.
\end{enumerate}

\subsection{Equations with multiple delays} 

We start with several explicit sufficient conditions when \eqref{2} is AS or UES.

\begin{theorem}\cite[Theorem 3.1]{BB2011}\label{th11}
Let  $a_j(t)\geq 0$ and ~
$\displaystyle 
\limsup_{s \rightarrow\infty}\sum_{j=1}^n \frac{a_j(s)}{A(s)}\int_{h_j(s)}^s  A(\zeta)d \zeta <1+\frac{1}{e} \,
$.

If $\displaystyle  \int_{t_0}^{\infty}  A(\zeta)d\zeta =\infty$ then equation \eqref{2} is AS.

If there is ${\mathcal R} >0$ for which  $\displaystyle \liminf_{t\rightarrow\infty} \int_t^{t+{\mathcal R}}A(\zeta)d\zeta>0$ then \eqref{2} is UES.
\end{theorem}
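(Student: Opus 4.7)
The proof plan rests on the Bohl--Perron lemma (Lemma \ref{lemma1}) combined with an auxiliary equation whose fundamental function is positive and UES (via Lemma \ref{lemma34} and Lemma \ref{lemma2}). The aim is to show that for every essentially bounded $f$ the solution of the nonhomogeneous problem \eqref{5} is bounded, whence Lemma \ref{lemma1} yields UES. The AS conclusion under $\int_{t_0}^\infty A=\infty$ follows from the same scheme once the uniform exponential bound on the auxiliary fundamental function is replaced by its decay to zero.

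Concretely, I would pick $T_0\geq t_0$ and $K<1+\tfrac{1}{e}$ so that the hypothesis holds uniformly with this $K$ for $t\geq T_0$. For each $j$ introduce the truncated delay $\xi_j(t)\in[h_j(t),t]$ defined by $\int_{\xi_j(t)}^t A(s)\,ds=\min\bigl\{\tfrac{1}{e},\int_{h_j(t)}^t A(s)\,ds\bigr\}$. Then $\int_{\xi_j(t)}^t A\leq\tfrac{1}{e}$ for every $j$, hence $\int_{\min_j\xi_j(t)}^t A\leq\tfrac{1}{e}$, so Lemma \ref{lemma34}(1) supplies a positive fundamental function $Z(t,s)$ for the auxiliary equation $\dot z(t)+\sum_j a_j(t)z(\xi_j(t))=0$; the lower-integral condition on $A$ together with Lemma \ref{lemma2} makes this equation UES. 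Using $x(h_j)=x(\xi_j)-\int_{h_j}^{\xi_j}\dot x$, rewrite \eqref{5} on $[T_0,\infty)$ as $\dot x+\sum_j a_j(t)x(\xi_j(t))=\Phi(t)$ with $\Phi(t):=\sum_j a_j(t)\int_{h_j(t)}^{\xi_j(t)}\dot x(s)\,ds+f(t)$, whence $x(t)=Z(t,T_0)x(T_0)+\int_{T_0}^t Z(t,s)\Phi(s)\,ds$. A crucial estimate is $\int_{T_0}^t Z(t,s)A(s)\,ds\leq 1$: setting $V(t):=\int_{T_0}^t Z(t,s)A(s)\,ds$, one checks by direct differentiation that $V$ solves $\dot V+\sum_j a_j V(\xi_j)=A$ with $V(T_0)=0$, so $1-V$ solves the homogeneous auxiliary equation with constant unit initial data and is therefore nonnegative by positivity of $Z$.

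Combining this bound with $|\dot x(s)|\leq A(s)M(s)+|f(s)|$, where $M(t):=\sup_{T_0\leq s\leq t}|x(s)|$, and with the truncation identity $\int_{h_j(s)}^{\xi_j(s)}A(\tau)\,d\tau=\max\bigl\{0,\int_{h_j(s)}^s A-\tfrac{1}{e}\bigr\}$, yields an a priori inequality of the form $M(t)\leq C_1(|x(T_0)|+\|f\|_\infty)+C_2\,M(t)$. The hard part---and where the constant $1+\tfrac{1}{e}$ is used in full---is verifying $C_2<1$: the crude estimate only bounds the correction coefficient by $\sup_s\sum_j\tfrac{a_j(s)}{A(s)}\max\bigl\{0,\int_{h_j(s)}^s A-\tfrac{1}{e}\bigr\}\leq K$, which need not be below $1$. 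To force $C_2<1$ one must refine by substituting $\dot x$ a second time inside $\Phi$, producing a double integral on which $\int Z\cdot A\leq 1$ can be applied once more while the extra factor of $\tfrac{1}{e}$ saved by the truncation per long delay is absorbed; this is precisely where the deficit $1+\tfrac{1}{e}-K>0$ in the hypothesis is spent. Once the bootstrap closes, $x$ is bounded whenever $f$ is, and Lemma \ref{lemma1} delivers UES.
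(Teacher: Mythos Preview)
The paper does not supply a proof of this theorem; it is quoted from \cite[Theorem~3.1]{BB2011}, and the only hint given is that ``the Bohl--Perron result (Lemma~\ref{lemma1}) is the main tool''. So there is no in-paper argument to compare against beyond that remark. Your framework---Bohl--Perron, a nonoscillatory auxiliary equation obtained by truncating each delay so that $\int_{\xi_j(t)}^t A\le 1/e$, and the bound $\int_{T_0}^t Z(t,s)A(s)\,ds\le 1$---is indeed the scheme used in \cite{BB2011}, and your derivation of that integral bound via $1-V$ is correct.

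The genuine gap is the one you flag yourself, and your proposed repair does not close it. Take $A\equiv 1$, $n=2$, $\frac{a_1}{A}=\varepsilon$ with $h_1(t)=t$, and $\frac{a_2}{A}=1-\varepsilon$ with $t-h_2(t)=(1+\tfrac1e-\delta)/(1-\varepsilon)$. Then the hypothesis holds with $K=1+\tfrac1e-\delta$, yet
\[
\sum_j\frac{a_j}{A}\,\phi_j \;=\;(1-\varepsilon)\Bigl(\tfrac{K}{1-\varepsilon}-\tfrac1e\Bigr)\;=\;1+\tfrac{\varepsilon}{e}-\delta\;>\;1
\quad\text{whenever }\varepsilon>e\delta,
\]
so your one-step bound gives $C_2>1$. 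You then assert that ``substituting $\dot x$ a second time'' produces a double integral on which the estimate $\int ZA\le 1$ can be applied again and an extra factor $1/e$ absorbed. But substituting $\dot x(\tau)=-\sum_k a_k(\tau)x(h_k(\tau))+f(\tau)$ and bounding $|x(h_k(\tau))|\le M$ reproduces exactly the same coefficient $\sum_j\frac{a_j}{A}\phi_j$; no additional $1/e$ materialises. To actually gain the missing $1/e$, the proof in \cite{BB2011} does not iterate blindly: it rewrites $x(h_k(\tau))$ as $x(s)$ minus an integral of $\dot x$, so that the leading contribution becomes $x(s)\sum_j a_j(s)\phi_j(s)$ rather than $M\cdot\sum_j\frac{a_j}{A}\phi_j$, and the resulting term $\int_{T_0}^t Z(t,s)A(s)\bigl[\sum_j\frac{a_j}{A}\phi_j\bigr]x(s)\,ds$ is then handled together with the auxiliary representation of $x(s)$ itself. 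That structural rearrangement---not a second blind substitution---is what converts the bound $K<1+\tfrac1e$ into a contraction. Your sketch stops precisely where the real work begins.
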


\begin{corollary}\label{cor3a}
Assume  that $a_j(t)\geq 0$, $A(t)  \equiv \alpha>0$ and 
\begin{equation}\label{17a}
\limsup_{t\rightarrow\infty}\sum_{j=1}^n a_j(t)(t-h_j(t))< 1+\frac{1}{e}  \, .
\end{equation}
Then equation \eqref{2} is UES.
\end{corollary}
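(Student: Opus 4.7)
The plan is to derive Corollary~\ref{cor3a} as a direct specialization of Theorem~\ref{th11}. The corollary is the constant-sum case $A(t)\equiv\alpha>0$, so the goal is simply to reduce the weighted integral condition in Theorem~\ref{th11} to the weighted-sum condition \eqref{17a} and to verify the uniform positivity condition that gives UES (rather than just AS).

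First, I would plug $A(\zeta)\equiv\alpha$ into the key hypothesis of Theorem~\ref{th11}. Since
$$
\int_{h_j(s)}^{s} A(\zeta)\,d\zeta \;=\; \alpha\bigl(s-h_j(s)\bigr)
\quad\text{and}\quad
\frac{a_j(s)}{A(s)} \;=\; \frac{a_j(s)}{\alpha},
$$
the weighted sum collapses:
$$
\sum_{j=1}^n \frac{a_j(s)}{A(s)} \int_{h_j(s)}^{s} A(\zeta)\,d\zeta \;=\; \sum_{j=1}^n \frac{a_j(s)}{\alpha}\,\alpha\bigl(s-h_j(s)\bigr) \;=\; \sum_{j=1}^n a_j(s)\bigl(s-h_j(s)\bigr).
$$
Taking $\limsup_{s\to\infty}$ and using \eqref{17a}, this quantity is strictly less than $1+1/e$, which is precisely the hypothesis of Theorem~\ref{th11}.

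Second, I would check the uniform-positivity requirement needed to upgrade AS to UES. Since $A(t)\equiv\alpha>0$ is a positive constant, for any choice of $\mathcal{R}>0$ we have
$$
\int_{t}^{t+\mathcal{R}} A(\zeta)\,d\zeta \;=\; \alpha\mathcal{R},
$$
so $\liminf_{t\to\infty}\int_{t}^{t+\mathcal{R}} A(\zeta)\,d\zeta = \alpha\mathcal{R}>0$. Thus the second alternative of Theorem~\ref{th11} applies, and we conclude that equation \eqref{2} is UES.

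There is no real obstacle here; this is bookkeeping rather than a substantive argument. The only point worth flagging is that one must invoke the \emph{second} conclusion of Theorem~\ref{th11} (UES under the $\liminf$ condition) rather than just the AS conclusion, and this requires the constancy of $A(t)$, not merely $A(t)\ge a_0>0$ pointwise (although in fact the constant hypothesis $A(t)\equiv\alpha$ trivially implies the stronger $\liminf$ bound, so nothing delicate is needed).
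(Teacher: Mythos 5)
Your proof is correct and matches the paper's intended derivation: Corollary~\ref{cor3a} is exactly the specialization of Theorem~\ref{th11} with $A(t)\equiv\alpha$, which collapses the weighted integral condition to \eqref{17a} and makes the $\liminf$ condition for UES hold trivially. The only small quibble is your closing parenthetical: a pointwise bound $A(t)\geq a_0>0$ would already give the $\liminf$ condition, so constancy is needed only for the exact collapse of the integral condition, not for the UES upgrade---but this does not affect the validity of your argument.
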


\begin{corollary}\label{cor3b}
Assume that $a_j(t)\geq 0$, while $\displaystyle 0<a_0\leq A(t)\leq A$ and
either
\begin{equation}\label{17b}
\limsup_{s \rightarrow\infty} \int_{\min_j \{h_j(s)\}}^s  A(\zeta) d\zeta <1+\frac{1}{e}
\end{equation}
or
\begin{equation}\label{17c}
\limsup_{s \rightarrow\infty}\sum_{j=1}^n a_j(s)(s-h_j(s))< \frac{a_0}{A}\left(1+\frac{1}{e}\right).
\end{equation}
Then equation \eqref{2} is UES.
\end{corollary}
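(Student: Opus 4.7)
The plan is to derive Corollary~\ref{cor3b} directly from Theorem~\ref{th11} by establishing its principal hypothesis
$$
\limsup_{s\to\infty}\sum_{j=1}^n \frac{a_j(s)}{A(s)}\int_{h_j(s)}^s A(\zeta)\,d\zeta < 1+\frac{1}{e}
$$
under either assumption \eqref{17b} or \eqref{17c}, and by verifying the $\liminf$ condition that guarantees the UES conclusion (as opposed to mere AS).

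First I would observe that the $\liminf$ condition in Theorem~\ref{th11} is automatic under the standing hypothesis $A(t)\geq a_0>0$: indeed, for any $\mathcal{R}>0$ we have $\int_t^{t+\mathcal{R}} A(\zeta)\,d\zeta \geq a_0\mathcal{R}$, so $\liminf_{t\to\infty}\int_t^{t+\mathcal{R}} A(\zeta)\,d\zeta \geq a_0\mathcal{R}>0$. Thus Theorem~\ref{th11} yields UES as soon as its main inequality holds.

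Next I would handle case \eqref{17b}. Since $h_j(s)\geq \min_k h_k(s)$ and $A\geq 0$,
$$
\int_{h_j(s)}^s A(\zeta)\,d\zeta \;\leq\; \int_{\min_k h_k(s)}^s A(\zeta)\,d\zeta
$$
for every $j$, and the right-hand side does not depend on $j$. Using $\sum_j a_j(s)/A(s)=1$,
$$
\sum_{j=1}^n \frac{a_j(s)}{A(s)}\int_{h_j(s)}^s A(\zeta)\,d\zeta \;\leq\; \int_{\min_k h_k(s)}^s A(\zeta)\,d\zeta,
$$
and taking $\limsup$ with \eqref{17b} gives the hypothesis of Theorem~\ref{th11}. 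For case \eqref{17c}, I would use the uniform bounds $A(\zeta)\leq A$ and $1/A(s)\leq 1/a_0$ to obtain
$$
\sum_{j=1}^n \frac{a_j(s)}{A(s)}\int_{h_j(s)}^s A(\zeta)\,d\zeta \;\leq\; \frac{A}{a_0}\sum_{j=1}^n a_j(s)(s-h_j(s)),
$$
and taking $\limsup$ together with \eqref{17c} produces strictly less than $\frac{A}{a_0}\cdot\frac{a_0}{A}(1+\tfrac{1}{e}) = 1+\tfrac{1}{e}$, as required.

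There is no substantive obstacle here; the corollary is a quantitative specialization of Theorem~\ref{th11}. The only delicate point worth flagging is that the normalization factor $a_0/A$ appearing on the right-hand side of \eqref{17c} is precisely the constant needed to absorb the crude bounds $A(\zeta)\leq A$ and $A(s)\geq a_0$, so the strict inequality is preserved under the reduction to Theorem~\ref{th11}.
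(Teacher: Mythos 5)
Your derivation is correct and is exactly the intended route: the corollary is a direct specialization of Theorem~\ref{th11}, with the $\liminf$ condition automatic from $A(t)\geq a_0>0$, case \eqref{17b} following from $\int_{h_j(s)}^s A\leq\int_{\min_k h_k(s)}^s A$ together with $\sum_j a_j(s)/A(s)=1$, and case \eqref{17c} from the crude bounds $A(\zeta)\leq A$, $A(s)\geq a_0$, which the factor $a_0/A$ in \eqref{17c} precisely compensates. Nothing is missing.
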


Let us compare Corollaries~\ref{cor3a} and \ref{cor3b} with Theorem \ref{th20}. For constant coefficients in the statement of 
 Theorem \ref{th20}, the constant $\frac{3}{2}$ is better than  $1+\frac{1}{e} \approx 1.368< 1.5$ in 
 \eqref{17a}, but the condition $\displaystyle \sum_{j=1}^n a_j(t)\equiv \alpha>0$ is more general than the assumption that each
coefficient is constant. 

Consider the general case of the coefficients in equation \eqref{2}. 
If $\frac{a_0}{A}\left(1+\frac{1}{e}\right)>1$ for the case of variable delays then inequality \eqref{17c} in Corollary \ref{cor3b} is
better than the corresponding inequality in Theorem \ref{th20} with the constant 1. 
Otherwise,  Theorem \ref{th20} is sharper.

In the following theorem \cite[Theorem 6]{BB2006b}  and some other stability tests, 
we do not assume that $a_j(t)\geq 0$. It means that all the coefficients may be oscillatory.
\begin{theorem}\cite{BB2006b}\label{th3}
Let $A(t)\geq \alpha>0$ for $A$ as in \eqref{A_definition}, and there is an argument function $r(t)\leq t$ such that
$$
\int_{r(t)}^t A(s)ds\leq \frac{1}{e}, \quad  \limsup_{t\rightarrow\infty}
\sum_{j=1}^n \frac{|a_j(t)|}{A(t)}\left|\int_{h_j(t)}^{r(t)}\sum_{j=1}^n  |a_j(s)|ds\right|<1.
$$
Then equation \eqref{2} is UES.
\end{theorem}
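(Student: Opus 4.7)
The plan is to invoke the Bohl-Perron principle (Lemma~\ref{lemma1}) via an auxiliary ``model'' equation whose fundamental function is nonnegative, and then treat the original equation as a perturbation. First, consider the non-homogeneous problem \eqref{5} with an arbitrary $f \in {\mathbf L}_{\infty}[t_0,\infty)$; it suffices to show $x$ is bounded on $[t_0,\infty)$ with a bound proportional to $\|f\|$. Rewrite \eqref{5} by adding and subtracting $A(t)x(r(t))$:
\begin{equation*}
\dot{x}(t)+A(t)x(r(t)) = f(t) + \sum_{j=1}^n a_j(t)\bigl[x(r(t))-x(h_j(t))\bigr].
\end{equation*}
Using the equation itself to express $\dot{x}$ and the identity $x(r(t))-x(h_j(t))=\int_{h_j(t)}^{r(t)}\dot{x}(s)\,ds$, I would substitute
\begin{equation*}
x(r(t))-x(h_j(t))=\int_{h_j(t)}^{r(t)}\Bigl[f(s)-\sum_{k=1}^n a_k(s)\,x(h_k(s))\Bigr]ds,
\end{equation*}
so that the perturbation is expressed linearly in $x(h_k(\cdot))$ and in $f$.

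Next, I would use the first hypothesis $\int_{r(t)}^t A(s)\,ds\leq 1/e$ together with Lemma~\ref{lemma34}(1) to conclude that the model equation $\dot{y}(t)+A(t)y(r(t))=0$ has a nonnegative fundamental function $Y(t,s)$, and with $A(t)\geq \alpha>0$ and Lemma~\ref{lemma2} to conclude that this model equation is UES. A key auxiliary fact is the $L^1$-bound $\int_{t_0}^t Y(t,s)A(s)\,ds\leq 1$: this follows by noting that $v(t):=1-\int_{t_0}^t Y(t,s)A(s)\,ds$ solves the model equation with constant initial function $v\equiv 1$ on $(-\infty,t_0]$, hence $v(t)\geq 0$ by positivity of $Y$. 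Consequently $\int_{t_0}^t Y(t,s)\,ds\leq 1/\alpha$.

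Applying the variation-of-constants (Cauchy) representation for the model equation to the rewritten equation, I would obtain
\begin{equation*}
x(t)=\int_{t_0}^t Y(t,s)\Bigl[f(s)+\sum_{j=1}^n a_j(s)\!\int_{h_j(s)}^{r(s)}\!\!f(u)\,du-\sum_{j=1}^n a_j(s)\!\int_{h_j(s)}^{r(s)}\sum_{k=1}^n a_k(u)x(h_k(u))\,du\Bigr]ds.
\end{equation*}
Let $K<1$ be the $\limsup$ in the hypothesis and pick $T_1\geq t_0$ so that $\sum_j |a_j(t)|\bigl|\int_{h_j(t)}^{r(t)}\sum_k|a_k|\bigr|\leq K\,A(t)$ for $t\geq T_1$. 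For any $T>T_1$, set $M_T:=\sup_{[t_0,T]}|x(t)|$ (finite by continuity). Using $\int Y A\leq 1$, $\int Y\leq 1/\alpha$, essential boundedness of delays and coefficients, and splitting the $s$-integration at $T_1$, the estimate reduces to
\begin{equation*}
M_T \leq C_1\|f\|+K\,M_T,
\end{equation*}
where $C_1$ depends on $\alpha$, $\|A\|$, the delay bound and $T_1$ but not on $T$. Since $K<1$, this gives $M_T\leq C_1\|f\|/(1-K)$ uniformly in $T$, so $x$ is bounded on $[t_0,\infty)$ and Lemma~\ref{lemma1} yields UES.

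The main obstacle is the last step: closing the a priori estimate despite not knowing in advance that $x\in {\mathbf L}_\infty$. The truncation $M_T=\sup_{[t_0,T]}|x|$ together with the strict inequality $K<1$ (rather than $\leq 1$) is exactly what allows the self-referential bound to be solved with a constant independent of $T$. A secondary, purely bookkeeping, obstacle is handling the initial segment $[t_0,T_1]$ where the bound $K$ is not yet in force and the middle term $\int_{h_j(s)}^{r(s)}f(u)\,du$ must be controlled using $\|f\|$ and the essential bound on $t-h_j(t)$; this contributes only to the constant $C_1$.
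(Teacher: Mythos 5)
Your plan is correct and follows essentially the same route as the cited proof of this theorem (and as the paper indicates for all results of this section): the Bohl--Perron lemma (Lemma~\ref{lemma1}), the auxiliary nonoscillatory equation $\dot y(t)+A(t)y(r(t))=0$ whose positive fundamental function $Y$ satisfies $\int_{t_0}^t Y(t,s)A(s)\,ds\le 1$ (hence $\int_{t_0}^t Y(t,s)\,ds\le 1/\alpha$), the rewriting of \eqref{5} through $x(r(t))-x(h_j(t))=\int_{h_j(t)}^{r(t)}\dot x(s)\,ds$ with $\dot x$ substituted from the equation, and closure of the estimate via the contraction constant $K<1$. The only point to tighten is the initial segment $[t_0,T_1]$: there the $x$-dependent term is controlled by $\sup_{[t_0,T_1]}|x|$, not by $\|f\|$ directly, so one should first obtain $\sup_{[t_0,T_1]}|x|\le C(T_1)\|f\|$ by a Gronwall bound on the finite interval (or, equivalently, shift the initial point so that the hypothesis holds for all $t\ge t_0$), after which your inequality $M_T\le C_1\|f\|+K M_T$ and the conclusion via Lemma~\ref{lemma1} are valid.
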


\begin{corollary}\label{cor5}
Let $A(t)\geq \alpha>0$ and
$$
\limsup_{s \rightarrow\infty}
\sum_{j=1}^n \frac{|a_j(s)|}{A(s)}\left|\int_{h_j(s)}^s \sum_{j=1}^n |a_j(\zeta)|d\zeta \right|<1.
$$
Then \eqref{2} is UES.
\end{corollary}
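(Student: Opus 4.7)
The plan is to derive Corollary~\ref{cor5} as a direct specialization of Theorem~\ref{th3}, by selecting the argument function $r$ so that the first hypothesis of Theorem~\ref{th3} becomes vacuous and the second hypothesis reduces exactly to the assumption of the corollary.

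First, I would set $r(t) := t$. This is a legitimate choice of argument function since the requirement $r(t)\leq t$ holds with equality. With this choice, the first condition of Theorem~\ref{th3},
\[
\int_{r(t)}^t A(s)\,ds \;\leq\; \frac{1}{e},
\]
is satisfied trivially, because the integral is $0$. Next, substituting $r(t)=t$ into the second condition of Theorem~\ref{th3} gives
\[
\limsup_{t\to\infty}\sum_{j=1}^n \frac{|a_j(t)|}{A(t)}\left|\int_{h_j(t)}^{t}\sum_{k=1}^n |a_k(s)|\,ds\right|<1,
\]
which is precisely the hypothesis of Corollary~\ref{cor5}. Combined with the standing assumption $A(t)\geq \alpha>0$, Theorem~\ref{th3} then yields that equation \eqref{2} is UES, completing the argument.

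The only subtle point---and what I would treat as the main obstacle---is verifying that $r(t)=t$ is in fact admissible in Theorem~\ref{th3}; one must check that no implicit strict inequality $r(t)<t$, nor a hypothesis such as $\lim_{t\to\infty}(t-r(t))>0$, is used in the proof of Theorem~\ref{th3}. Should such an obstruction arise, a painless workaround is to take $r_\varepsilon(t) := t-\varepsilon$ for a sufficiently small $\varepsilon>0$: the first condition still holds since $\int_{t-\varepsilon}^t A(s)\,ds \leq \|A\|_\infty\,\varepsilon \leq 1/e$ for $\varepsilon$ small, while by the absolute continuity of the integral the second condition differs from that of Corollary~\ref{cor5} by an $O(\varepsilon)$ term uniformly in $t$, so the strict inequality $<1$ is preserved. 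Either way, the corollary follows from Theorem~\ref{th3} without any additional machinery.
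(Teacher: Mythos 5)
Your proposal is correct and is exactly the intended derivation: the paper states Corollary~\ref{cor5} as an immediate consequence of Theorem~\ref{th3} with the choice $r(t)\equiv t$, for which the condition $\int_{r(t)}^t A(s)\,ds\leq \frac{1}{e}$ holds trivially and the second condition coincides with the corollary's hypothesis. Your worry about admissibility is unfounded, since Theorem~\ref{th3} only requires $r(t)\leq t$, so the $r_\varepsilon(t)=t-\varepsilon$ fallback (which also works, by essential boundedness of the coefficients) is not needed.
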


Next, proceed to the autonomous case
\begin{equation}\label{12b}
\dot{x}(t)+\sum_{j=1}^n a_j x(t-\sigma_j)=0, t\geq t_0,~ \sigma_j\geq 0.
\end{equation}

\begin{corollary}\label{cor5a}
Assume that $\displaystyle \sum_{j=1}^n a_j>0$ and
$$
\sum_{j=1}^n |a_j|\sigma_j\leq \frac{\sum_{j=1}^n a_j}{\sum_{j=1}^n  |a_i|} \, .
$$
Then equation \eqref{12b} is UES.
\end{corollary}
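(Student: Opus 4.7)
The statement is presented as a corollary of Corollary~\ref{cor5}, so the plan is to simply specialize that result to the autonomous setting \eqref{12b} and verify that the hypotheses translate exactly into the inequality stated here.

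First I would identify the objects in \eqref{12b} with those in \eqref{2}: take $a_j(t)\equiv a_j$ and $h_j(t)=t-\sigma_j$. Then the sum of coefficients from \eqref{A_definition} is the constant $A(t)\equiv A:=\sum_{j=1}^n a_j$, and the hypothesis $\sum_{j=1}^n a_j>0$ supplies the required positive lower bound $A(t)\geq \alpha>0$ with $\alpha=A$, so the first assumption of Corollary~\ref{cor5} is satisfied.

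Next I would compute the inner expression appearing in Corollary~\ref{cor5}. Since the coefficients are constant and $h_j(t)=t-\sigma_j$,
\begin{equation*}
\left|\int_{h_j(t)}^{t}\sum_{k=1}^{n}|a_k|\,d\zeta\right|=\sigma_j\sum_{k=1}^{n}|a_k|,
\end{equation*}
independently of $t$. Substituting into the controlling quantity of Corollary~\ref{cor5} gives
\begin{equation*}
\sum_{j=1}^{n}\frac{|a_j|}{A}\left|\int_{h_j(t)}^{t}\sum_{k=1}^{n}|a_k|\,d\zeta\right|
=\frac{\sum_{k=1}^{n}|a_k|}{\sum_{j=1}^{n}a_j}\sum_{j=1}^{n}|a_j|\sigma_j,
\end{equation*}
which is also independent of $t$, so the $\limsup$ equals the expression itself. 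Thus the stability condition of Corollary~\ref{cor5} reduces to
\begin{equation*}
\sum_{j=1}^{n}|a_j|\sigma_j\;\le\;\frac{\sum_{j=1}^{n}a_j}{\sum_{j=1}^{n}|a_j|},
\end{equation*}
which is precisely the standing assumption. Hence \eqref{12b} is UES by Corollary~\ref{cor5}.

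The only delicate point I foresee is the strict versus non-strict inequality: Corollary~\ref{cor5} is formulated with a strict $<1$, while the corollary at hand allows equality. I would either treat this as a minor typographical slip and present the argument under strict inequality, or remark that since the autonomous case turns the $\limsup$ into an equality, one can absorb a boundary case by a small perturbation of the $\sigma_j$'s (or by choosing $\alpha$ strictly below $A$ in the lower bound) without affecting the conclusion. Beyond this boundary issue, no genuine obstacle arises; the proof is a direct substitution into Corollary~\ref{cor5}.
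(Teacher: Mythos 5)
Your reduction of \eqref{12b} to Corollary~\ref{cor5} is the natural route and is correct as far as it goes: with $a_j(t)\equiv a_j$, $h_j(t)=t-\sigma_j$, the controlling quantity is the constant $\frac{\sum_k|a_k|}{\sum_j a_j}\sum_j|a_j|\sigma_j$, so Corollary~\ref{cor5} applies exactly when $\sum_j|a_j|\sigma_j<\bigl(\sum_j a_j\bigr)/\bigl(\sum_j|a_j|\bigr)$, and this is surely the intended derivation of the strict-inequality case. The gap is the boundary case: the corollary is stated with a non-strict inequality, and neither of your suggested repairs closes it. Perturbing the $\sigma_j$ downward only shows that the perturbed equations are UES; exponential stability is an open property, not a closed one, and the rightmost characteristic root of a family of stable equations can approach the imaginary axis in the limit, so nothing follows for the original equation. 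Choosing ``$\alpha$ strictly below $A$'' is vacuous, since $\alpha$ enters Corollary~\ref{cor5} only as a lower bound for $A(s)$ and does not occur in the inequality that must be $<1$. Nor can the equality case always be rescued through Theorem~\ref{th3} with $r(t)=t-\rho$: for $\dot x(t)+10x(t)+x(t-1)=0$, where equality holds, the relevant quantity becomes $1+9\rho\ge 1$ for every admissible $\rho\ge 0$, although this equation is plainly UES.

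The equality case is true, but it needs a separate, genuinely autonomous argument, e.g.\ via the characteristic function. Set $A=\sum_j a_j>0$ and suppose $\lambda+\sum_j a_je^{-\lambda\sigma_j}=0$ with $\Re\lambda\ge 0$. Then $|\lambda|\le\sum_j|a_j|$, and writing $\lambda+A=\sum_j a_j\left(1-e^{-\lambda\sigma_j}\right)$ together with $\left|1-e^{-\lambda\sigma_j}\right|\le|\lambda|\sigma_j$ (valid for $\Re\lambda\ge0$), the hypothesis, even with equality, yields $|\lambda+A|\le|\lambda|\sum_j|a_j|\sigma_j\le A$; on the other hand $\Re\lambda\ge0$ gives $|\lambda+A|^2\ge A^2+|\lambda|^2$, so $\lambda=0$, contradicting $A>0$. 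Hence no characteristic roots lie in the closed right half-plane and \eqref{12b} is UES, including on the boundary. Either add such an argument for the case of equality, or restrict your statement to strict inequality and note explicitly that it is then weaker than the corollary as written.
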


\begin{theorem}\cite{BB2009a}\label{th6}
Let  $A$ as in \eqref{A_definition} be separated from zero $A(t)\geq \alpha>0$. Once there exists a positive solution to
$$
\dot{x}(t)+\sum_{j=1}^n a_j^{+}(t)x(h_j(t))=0,~\mbox{ where ~~}  u^{+}=\max\{u,0\},
$$
(equivalently, its fundamental function is positive),
\eqref{2} is UES.
\end{theorem}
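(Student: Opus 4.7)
The plan is to apply the Bohl--Perron criterion (Lemma~\ref{lemma1}): I will show that for each $f\in\mathbf{L}_\infty[t_0,\infty)$, the unique solution of the non-homogeneous problem \eqref{5} satisfies $x\in\mathbf{L}_\infty[t_0,\infty)$. Denote $a_j(t)=a_j^+(t)-a_j^-(t)$ with $a_j^\pm\geq 0$, let $C(t)=\sum_{j=1}^n a_j^+(t)$ and $C^\ast=\esssup C<\infty$ (finite since all coefficients are essentially bounded), and let $Y(t,s)$ be the fundamental function of the auxiliary equation. By hypothesis $Y(t,s)\geq 0$, and because $C(t)\geq A(t)\geq\alpha>0$, Lemma~\ref{lemma2} guarantees that the auxiliary equation is UES, so $Y$ decays exponentially.

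Rewriting \eqref{5} as
\[
\dot x(t)+\sum_{j=1}^n a_j^+(t)\,x(h_j(t)) = f(t)+\sum_{j=1}^n a_j^-(t)\,x(h_j(t)),
\]
and using the variation-of-constants formula for the auxiliary equation with zero initial data (as in \eqref{5}), I obtain the Volterra-type representation
\[
x(t)=\int_{t_0}^t Y(t,s)\Bigl[f(s)+\sum_{j=1}^n a_j^-(s)\,x(h_j(s))\Bigr]\,ds.
\]
The proof then reduces to bounding the kernel $\int_{t_0}^t Y(t,s)\sum_j a_j^-(s)\,ds$ uniformly in $t$ by some $q<1$.

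For this key step, let $y(t)$ solve the auxiliary equation with initial function identically $1$ on $(-\infty,t_0]$. Since $a_j^+\geq 0$ and $Y\geq 0$ force $\dot y\leq 0$ and $0\leq y(t)\leq 1$, substituting $z=1-y$ produces the identity
\[
\int_{t_0}^t Y(t,s)\,C(s)\,ds = 1-y(t)\in[0,1).
\]
Combining this with $\alpha\leq C(s)\leq C^\ast$ yields the two-sided bound $(1-y(t))/C^\ast\leq\int_{t_0}^t Y(t,s)\,ds\leq 1/\alpha$. Since $\sum_j a_j^-(s)=C(s)-A(s)\leq C(s)-\alpha$, I would then estimate
\[
\int_{t_0}^t Y(t,s)\sum_{j=1}^n a_j^-(s)\,ds \leq (1-y(t))-\alpha\!\int_{t_0}^t\! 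Y(t,s)\,ds \leq (1-y(t))\Bigl(1-\tfrac{\alpha}{C^\ast}\Bigr) \leq 1-\tfrac{\alpha}{C^\ast}=:q<1.
\]

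Inserting these bounds into the representation and writing $\|x\|_{[t_0,t]}:=\esssup_{s\in[t_0,t]}|x(s)|$ (finite on bounded intervals by local existence-uniqueness), I get $|x(t)|\leq \|f\|_\infty/\alpha+q\,\|x\|_{[t_0,t]}$; taking the supremum in $t$ and rearranging yields $\|x\|_{[t_0,t]}\leq \|f\|_\infty/(\alpha(1-q))$, a bound independent of $t$, whence $x\in\mathbf{L}_\infty[t_0,\infty)$. Lemma~\ref{lemma1} then delivers UES of \eqref{2}. The main obstacle I anticipate is precisely the uniform kernel estimate $q<1$: the naive bound $\int Y(t,s)\sum_j a_j^-(s)\,ds\leq 1-y(t)$ is useless since $y(t)\to 0$ under UES of the auxiliary equation, and sharpening it genuinely requires coupling the lower bound $\alpha$ on $A$ with the essential upper bound $C^\ast$ on $\sum_j a_j^+$.
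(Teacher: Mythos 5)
Your overall strategy (Bohl--Perron via Lemma~\ref{lemma1}, splitting $a_j=a_j^+-a_j^-$, using Lemma~\ref{lemma2} to get UES of the ``$+$'' equation, and a variation-of-constants perturbation argument) is sensible and close in spirit to the cited source, but the central kernel estimate contains a genuine error. The step that fails is the claim that the solution $y$ of the auxiliary equation with initial function $\equiv 1$ satisfies $0\le y(t)\le 1$, equivalently that $\int_{t_0}^t Y(t,s)C(s)\,ds=1-y(t)\in[0,1)$. Positivity of the fundamental function does \emph{not} control solutions with nonzero pre-history: $Y(\cdot,s)$ has zero history before $s$, while $y$ has history $\equiv 1$, and a burst of coefficient mass on an initial layer where the delayed arguments still point before $t_0$ drives $y$ negative without destroying positivity of $Y$. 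Concretely, take one delay, $h(t)=t-1$, $a^+(t)=20$ on $[0,0.1)$ and $a^+(t)=0.3$ for $t\ge 0.1$: every $Y(t,s)$ equals $1$ until $t=s+1$ and afterwards follows the autonomous equation with coefficient $0.3<1/e$, so $Y(t,s)>0$ for all $t\ge s\ge 0$; yet $\dot y=-20$ on $[0,0.1)$ gives $y(0.1)=-1$, and $\int_0^{0.1}Y(0.1,s)a^+(s)\,ds=2>1$. A two-term version ($a_1=20$, $a_2=-19.7$ on $[0,0.1)$, then $a_1=0.3$, $a_2=0$, so $A(t)\equiv 0.3=\alpha$) satisfies \emph{all} hypotheses of the theorem, but your kernel $\int_{t_0}^t Y(t,s)\sum_j a_j^-(s)\,ds$ equals about $1.97$ at $t=0.1$, far above your claimed $q=1-\alpha/C^\ast$. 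Hence the inequality $\|x\|_{[t_0,t]}\le \|f\|/\alpha+q\,\|x\|_{[t_0,t]}$ is not established, and the proof as written collapses exactly at the point you yourself identified as the main obstacle; note also that your bound $\int_{t_0}^t Y(t,s)\,ds\le 1/\alpha$ rests on the same false inequality $y\ge 0$.

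The damage is confined to an initial layer (for the burst to work, the delays must reach back before $t_0$, where the zero initial data of \eqref{5} provide no damping; late bursts of this size would destroy positivity of $Y$), and since the auxiliary equation is UES one has $y(t)\to 0$, so your chain does give $\limsup_{t\to\infty}\int_{t_0}^t Y(t,s)\sum_j a_j^-(s)\,ds\le 1-\alpha/C^\ast<1$. So the argument is likely repairable: bound $x$ on a finite initial interval $[t_0,t_0+H]$ by a Gronwall-type estimate, and run the contraction only for large $t$, using the exponential decay of $Y$ (which you derived from Lemma~\ref{lemma2} but then never used) to suppress the contribution of the initial segment. That additional work is precisely what is missing; as submitted, the key uniform bound $q<1$ is false and the proof has a genuine gap.
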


For the next  two theorems, in  \eqref{2} 
the sum of the coefficients 
is not necessarily positive: the inequality $A(t)\geq 0$ can be violated. It means that the sum of the coefficients may be oscillatory.

\begin{theorem}\cite{BB2021}\label{th13}
Denote by $A_0$ the sum of the norms $A_0=\sum_{j=1}^n \|a_j\|$.
Let
\begin{equation}\label{13b}
\dot{x}(t)+A(t)x(t)=0
\end{equation}
be  UES, we recall that the delays are bounded $ 0\leq t-h_j(t)\leq \tau_j$. If 
$$
A_0\int_{t_0}^t   e^{-\int_s^t A(\xi)d\xi}\sum_{j=1}^n \tau_j |a_j(s)| ds\leq \beta<1, \quad  t\geq t_0
$$
then  \eqref{2} is UES.
\end{theorem}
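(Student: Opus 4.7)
The plan is to apply the Bohl-Perron criterion (Lemma~\ref{lemma1}): it suffices to show that for every $f \in {\mathbf L}_\infty[t_0,\infty)$, the unique solution of the nonhomogeneous initial value problem \eqref{5} is bounded on $[t_0,\infty)$ by a constant multiple of $\|f\|$, uniformly in $t_0$.

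The first step is to rewrite \eqref{5} in the form
$$\dot{x}(t) + A(t)x(t) = \sum_{j=1}^n a_j(t)\bigl[x(t) - x(h_j(t))\bigr] + f(t), \qquad x(t_0)=0,$$
isolating on the left-hand side the scalar ODE $\dot{x}+A(t)x=0$, which is UES by hypothesis. Let $Y(t,s) = \exp\bigl(-\int_s^t A(\xi)\,d\xi\bigr)$ denote its fundamental solution; the assumed UES supplies constants $M,\nu>0$ with $0 < Y(t,s) \leq M e^{-\nu(t-s)}$ for $t_0 \leq s \leq t$. Variation of parameters then gives
$$x(t) = \int_{t_0}^t Y(t,s) \Biggl\{\sum_{j=1}^n a_j(s)\bigl[x(s)-x(h_j(s))\bigr] + f(s)\Biggr\}\,ds.$$

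The second step is to estimate the increments $x(s) - x(h_j(s))$ via the delay bound $\tau_j$. Using \eqref{5} to bound $|\dot{x}(\xi)| \leq A_0\|x\|_\xi + \|f\|$, where $\|x\|_t := \esssup_{t_0 \leq u \leq t} |x(u)|$, I obtain
$$|x(s) - x(h_j(s))| \leq \int_{h_j(s)}^s |\dot{x}(\xi)|\,d\xi \leq \tau_j\bigl(A_0\|x\|_s + \|f\|\bigr).$$
Substituting back, using $\|x\|_s \leq \|x\|_t$ under the integral, and invoking the theorem's hypothesis $A_0\int_{t_0}^t Y(t,s)\sum_j \tau_j|a_j(s)|\,ds \leq \beta$ together with $\int_{t_0}^t Y(t,s)\,ds \leq M/\nu$, one arrives at
$$|x(t)| \leq \beta\,\|x\|_t + \Bigl(\tfrac{\beta}{A_0} + \tfrac{M}{\nu}\Bigr)\|f\|.$$
Taking the supremum on $[t_0,t]$ and absorbing the $\beta\|x\|_t$ term on the left yields $\|x\|_t \leq (1-\beta)^{-1}\bigl(\beta/A_0 + M/\nu\bigr)\|f\|$, a bound uniform in $t$ and in $t_0$. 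Lemma~\ref{lemma1} then delivers UES of \eqref{2}.

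The main subtlety lies not in any deep analytic step but in carefully tracking the bookkeeping so that the coefficient of $\|f\|$ on the right-hand side is genuinely independent of $t$; this relies precisely on the two uniform bounds $\int_{t_0}^t Y(t,s)\,ds \leq M/\nu$ and $A_0\int_{t_0}^t Y(t,s)\sum_j\tau_j|a_j(s)|\,ds \leq \beta$. A related delicate point is that $A(t)$ need not be sign-definite, so the kernel $Y(t,s)$ cannot be controlled naively from the integral $\int_s^t A$; one must actually invoke the UES hypothesis on the auxiliary ODE \eqref{13b} to extract exponential decay of $Y$. Once these uniform bounds are in hand, the contraction-style absorption closes the argument.
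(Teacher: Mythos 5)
Your proof is correct: the rewriting of \eqref{5} as $\dot{x}(t)+A(t)x(t)=\sum_{j=1}^n a_j(t)\bigl[x(t)-x(h_j(t))\bigr]+f(t)$, the increment bound via $|\dot{x}(\xi)|\leq A_0\|x\|_{\xi}+\|f\|$, the exponential estimate on $Y(t,s)$ drawn from the UES hypothesis on \eqref{13b} (which is indeed indispensable since $A(t)$ may change sign), and the final absorption using $\beta<1$ all check out, after which Lemma~\ref{lemma1} yields UES. This is essentially the same Bohl--Perron, solution-estimate route that the paper declares to be the main tool for this section and that it carries out explicitly in the proof of Theorem~\ref{th2} (the theorem itself is quoted from \cite{BB2021} without proof here), so no further comparison is needed.
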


Let us note that equation \eqref{13b} is UES if for some ${\mathcal R}>0$ we have
$$
\liminf_{s\geq t_0} \int_s^{s+{\mathcal R}} A(\zeta)d\zeta>0.
$$

Next, let us present the sum of the coefficients as a positive term $\tilde{a}$ perturbed by a smaller oscillatory part.

\begin{theorem}\cite{BB2021}\label{th14}
Let  $\displaystyle \sum_{j=1}^n |a_j(t)|\leq A_0$,  $s-h_j(s)\leq \tau_j$ and  $A(t)=\tilde{a}(t)+\alpha(t)$, where
$$
\tilde{a}(t)\geq a_0>0, \quad \sup_{t\geq s\geq t_0}\left|\int_s^t \alpha(\xi)d\xi\right|\leq  \alpha_0<\infty, \quad
A_0\sum_{j=1}^n \tau_j  \left\|\frac{a_j}{\tilde{a}}\right\|<e^{-\alpha_0}.
$$
Then equation \eqref{2} is UES.
\end{theorem}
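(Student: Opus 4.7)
The plan is to invoke the Bohl--Perron criterion (Lemma~\ref{lemma1}), so I take an arbitrary $f\in{\mathbf L}_\infty[t_0,\infty)$ and aim to bound the solution $x$ of the non-homogeneous problem \eqref{5} uniformly in terms of $\|f\|$. The first move is the standard rewriting
$$
\dot x(t)+A(t)x(t)=\sum_{j=1}^n a_j(t)\bigl[x(t)-x(h_j(t))\bigr]+f(t),
$$
and applying the variation-of-constants formula for the ODE $\dot y+A(t)y=g(t)$, which yields, with $x(t_0)=0$,
$$
x(t)=\int_{t_0}^t e^{-\int_s^t A(\xi)\,d\xi}\,\Bigl[\sum_{j=1}^n a_j(s)\bigl(x(s)-x(h_j(s))\bigr)+f(s)\Bigr]\,ds.
$$
The identity $x(s)-x(h_j(s))=\int_{h_j(s)}^s\dot x(u)\,du$ (valid with the convention $\dot x\equiv 0$ on $(-\infty,t_0)$) lets me estimate the difference by $\tau_j\|\dot x\|_{[t_0,s]}$, and from \eqref{5} itself one has $\|\dot x\|_{[t_0,T]}\le A_0\|x\|_{[t_0,T]}+\|f\|$.

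The key analytic step is to control the integral kernel using the splitting $A=\tilde a+\alpha$. Since $|\int_s^t\alpha\,d\xi|\le\alpha_0$, one obtains
$$
e^{-\int_s^t A(\xi)\,d\xi}\le e^{\alpha_0}e^{-\int_s^t \tilde a(\xi)\,d\xi},
$$
and then, using $|a_j(s)|\le\|a_j/\tilde a\|\,\tilde a(s)$ together with the elementary identity $\int_{t_0}^t \tilde a(s)\,e^{-\int_s^t\tilde a(\xi)d\xi}\,ds=1-e^{-\int_{t_0}^t\tilde a}\le 1$, I get
$$
\int_{t_0}^t e^{-\int_s^t A(\xi)\,d\xi}|a_j(s)|\,ds\;\le\;e^{\alpha_0}\Bigl\|\frac{a_j}{\tilde a}\Bigr\|.
$$
The analogous bound on $\int_{t_0}^t e^{-\int_s^t A(\xi)d\xi}|f(s)|\,ds$ is $e^{\alpha_0}\|f\|/a_0$, using $\tilde a\ge a_0$.

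Putting these together and letting $M_T:=\|x\|_{[t_0,T]}$, $K:=\sum_{j=1}^n\tau_j\|a_j/\tilde a\|$, I arrive at
$$
M_T\;\le\;e^{\alpha_0}K(A_0M_T+\|f\|)+\frac{e^{\alpha_0}}{a_0}\|f\|.
$$
The hypothesis $A_0K<e^{-\alpha_0}$ makes the coefficient $e^{\alpha_0}A_0K$ strictly less than $1$, so this rearranges to $M_T\le C\|f\|$ with $C$ independent of $T$; hence $x$ is bounded on $[t_0,\infty)$, and Lemma~\ref{lemma1} delivers UES. The main obstacle I expect is purely bookkeeping: making sure the $\dot x\equiv 0$ convention before $t_0$ is handled correctly in the delayed integrals when $h_j(s)<t_0$, and that the estimate $\int\tilde a\,e^{-\int\tilde a}\le 1$ is applied to the right weight — the rest of the argument is a clean perturbative loop closure.
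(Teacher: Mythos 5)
Your proof is correct and follows essentially the same route as the paper's: the Bohl--Perron criterion (Lemma~\ref{lemma1}) combined with the rewriting $\dot x(t)+A(t)x(t)=\sum_{j=1}^n a_j(t)\int_{h_j(t)}^t\dot x(\xi)\,d\xi+f(t)$, variation of constants, and the kernel bound $e^{-\int_s^t A(\xi)d\xi}\leq e^{\alpha_0}e^{-\int_s^t\tilde a(\xi)d\xi}$ together with $\int_{t_0}^t\tilde a(s)e^{-\int_s^t\tilde a(\xi)d\xi}ds\leq 1$ --- exactly the estimates by which the condition of Theorem~\ref{th14} reduces to the scheme of Theorem~\ref{th13}. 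The closing contraction step under $e^{\alpha_0}A_0\sum_{j}\tau_j\left\|a_j/\tilde a\right\|<1$ is handled correctly, including the convention $\dot x\equiv 0$, $x\equiv 0$ on $(-\infty,t_0]$, so there is no gap.
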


Now, we explicitly assume that the equation has  both positive coefficients and negative ones, and the sign is prescribed for each term. 

\begin{theorem}\cite{BB2009a}\label{th8}
Once  the equation
\begin{equation}\label{16}
\dot{x}(t)+ \left. \left. \sum_{j=1}^n  \right[  a_j(t)x(h_j(t))-b_j(t)x(g_j(t))  \right]  =0
\end{equation}
has a positive fundamental function and 
$$
b_j(t)\geq 0, \quad  \liminf_{t\rightarrow\infty} \left.\left. \sum_{j=1}^n  \right[ a_j(t)-b_j(t) \right] >0,
$$$$
h_j(t)\leq g_j(t),  \quad \limsup_{t\rightarrow\infty}(t-h_j(t))\leq H, \quad
H\limsup_{t\rightarrow\infty}b_j(t)<1, \quad j=1,\dots n,
$$
we have that \eqref{16} is UES.
\end{theorem}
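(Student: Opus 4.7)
The plan is to apply the Bohl-Perron principle (Lemma~\ref{lemma1}): UES of equation~\eqref{16} follows once we show that, for every bounded right-hand side $f\in\mathbf{L}_\infty[t_0,\infty)$, the solution of the inhomogeneous version of \eqref{16} with zero initial history on $(-\infty,t_0]$ is bounded. Since the fundamental function $X(t,s)$ is nonnegative by hypothesis, this solution equals $\int_{t_0}^t X(t,s)f(s)\,ds$ and is pointwise dominated by $\|f\|\cdot\xi(t)$, where $\xi(t):=\int_{t_0}^t X(t,s)\,ds\geq 0$ itself solves $\dot\xi+L\xi=1$ with zero initial history (here $L$ denotes the delay operator in \eqref{16}). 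Everything thus reduces to proving that $\xi$ is uniformly bounded in $t$.

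The central step is a comparison with the constant function $y\equiv 1$, which satisfies $\dot y+Ly=\sum_j(a_j-b_j)$. Taking $y$ with initial history $\psi\equiv 1$ on $(-\infty,t_0]$ and applying the variation-of-constants formula on $[t_0,\infty)$, one obtains the identity
\begin{equation*}
1 = X(t,t_0) + \int_{t_0}^{t} X(t,s)\,\Phi(s)\,ds,\qquad \Phi(s):=\sum_j a_j(s)\mathbf{1}_{\{h_j(s)\geq t_0\}} - \sum_j b_j(s)\mathbf{1}_{\{g_j(s)\geq t_0\}}.
\end{equation*}
For $s\geq T_*:=t_0+\max_j \tau_j$, both indicators equal one and $\Phi(s)=\sum_j(a_j-b_j)(s)\geq c>0$ once $s$ is past the threshold from the dominance hypothesis, while on the finite strip $[t_0,T_*]$ the function $|\Phi|$ is bounded by a constant $M$ depending only on the essential bounds of the coefficients. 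Combining this with $X\geq 0$ yields, for $t$ large,
\begin{equation*}
c\int_{T_*}^{t} X(t,s)\,ds \;\leq\; 1 + M\!\int_{t_0}^{T_*} X(t,s)\,ds,
\end{equation*}
so boundedness of $\xi$ reduces to a uniform-in-$t$ bound for $X(t,s_0)$ on the fixed finite interval $s_0\in[t_0,T_*]$.

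The main obstacle is precisely this uniform control on the initial strip, and it is here that the hypotheses $h_j\leq g_j$, $t-h_j\leq H$ and $H\limsup b_j<1$ should be invoked. Using the identity $x(h_j(t))-x(g_j(t))=-\int_{h_j(t)}^{g_j(t)}\dot x(\tau)\,d\tau$, equation~\eqref{16} can be rewritten as
\begin{equation*}
\dot x(t) + \sum_j (a_j-b_j)(t)\,x(h_j(t)) \;=\; \sum_j b_j(t)\!\int_{h_j(t)}^{g_j(t)}\!\dot x(\tau)\,d\tau.
\end{equation*}
Each integral on the right is taken over a window of length at most $H$ eventually, and the strict inequality $H\limsup b_j<1$ would allow a bootstrap-type argument that absorbs the derivative term on the right back into $\dot x$, yielding a uniform a priori bound on $\dot x$ in terms of $\sup|x|$. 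This controls $X(\cdot,s_0)$ uniformly for $s_0\in[t_0,T_*]$ and, substituted back into the previous display, gives the desired uniform bound on $\xi$, completing the Bohl-Perron argument.
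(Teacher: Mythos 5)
Your opening reduction is sound: with $X(t,s)\ge 0$, Lemma~\ref{lemma1} does reduce UES to boundedness of $\xi(t)=\int_{t_0}^t X(t,s)\,ds$, the identity $1=X(t,t_0)+\int_{t_0}^t X(t,s)\Phi(s)\,ds$ obtained from the constant solution $y\equiv 1$ is correct, and so is the conclusion that everything hinges on a bound for $X(t,s_0)$, uniform in $t$, for $s_0$ in the initial strip $[t_0,T_*]$. (Note the survey itself states Theorem~\ref{th8} without proof, quoting \cite{BB2009a}, so the assessment here concerns your argument alone.) However, that remaining bound is not a technical leftover: $X(\cdot,s_0)$ is itself a solution of the homogeneous equation \eqref{16}, so what still has to be shown is precisely boundedness of solutions of \eqref{16} --- the substance of the stability assertion --- and the mechanism you sketch for it does not deliver this.

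Concretely, the ``bootstrap'' you describe can only produce an estimate of the form $\|\dot x\|\le C\|x\|$. Such an estimate is already immediate from \eqref{16}, since the coefficients are essentially bounded ($|\dot x(t)|\le \sum_j (|a_j(t)|+b_j(t))\,\|x\|$); it uses none of the hypotheses $h_j\le g_j$, $t-h_j(t)\le H$, $H\limsup b_j<1$ in an essential way, and it cannot bound $\|x\|$ or $X(t,s_0)$, because a function may grow without bound while its derivative is dominated by its sup-norm (e.g.\ $x(t)=e^{t}$). Moreover, the absorption step as written would require the aggregated smallness $H\sum_j\|b_j\|<1$, whereas the theorem assumes only the per-term condition $H\limsup_{t\to\infty}b_j(t)<1$ for each $j$ --- a further indication that this is not how those hypotheses are meant to enter. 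What is missing is the actual argument by which positivity of the fundamental function combined with $h_j\le g_j$ and $H\limsup b_j<1$ forces uniform boundedness of $X(t,s_0)$ (for instance a comparison or monotonicity argument exploiting $X>0$, a Halanay-type inequality, or a Bohl--Perron comparison with an auxiliary equation already known to be UES). Without that ingredient the proof is incomplete at its decisive step.
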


\begin{remark}
If 
\eqref{2}  has a positive fundamental function
and $b_j(t)\geq 0$, the same is true for \eqref{16}.
\end{remark}

Consider  the equation with, generally, different number of positive and negative terms
\begin{equation}\label{17}
\dot{x}(t)+\sum_{j=1}^n a_j(t)x(h_j(t))-\sum_{i=1}^{\ell}  b_i(t)x(g_i(t))=0.
\end{equation}
Here $a_j(t)\geq 0$, $b_i(t)\geq 0$.

\begin{theorem}\cite{BB2009a}\label{th10}
Assume that  \eqref{2}  has a positive fundamental function and
$$ \displaystyle 
\sum_{j=1}^n a_j(t)-\sum_{i=1}^{\ell} b_i(t)\geq \alpha>0.
$$
Then, equation \eqref{17} is UES.
\end{theorem}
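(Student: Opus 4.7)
The plan is to deduce Theorem~\ref{th10} as a direct corollary of Theorem~\ref{th6}. The key observation is that Theorem~\ref{th6} allows coefficients of arbitrary sign in equation~\eqref{2}, provided two things hold: (i) the sum of coefficients stays uniformly positive, and (ii) the auxiliary equation obtained by keeping only the positive parts of the coefficients has a positive fundamental function. Both conditions will transfer transparently from the hypotheses of Theorem~\ref{th10} once \eqref{17} is rewritten in the form of \eqref{2}.

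The first step is a bookkeeping relabeling. Set $m = n + \ell$, define $c_k(t) = a_k(t)$ and $r_k(t) = h_k(t)$ for $k = 1, \dots, n$, and $c_{n+i}(t) = -b_i(t)$, $r_{n+i}(t) = g_i(t)$ for $i = 1, \dots, \ell$. Then equation \eqref{17} becomes
\[
\dot x(t) + \sum_{k=1}^{m} c_k(t)\, x(r_k(t)) = 0,
\]
which is exactly of the form \eqref{2}, with the understanding that the new coefficients $c_k$ are not sign-constrained.

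Next I would verify the hypotheses of Theorem~\ref{th6} for this reformulated equation. The sum $\sum_{k=1}^{m} c_k(t) = \sum_{j=1}^{n} a_j(t) - \sum_{i=1}^{\ell} b_i(t) \geq \alpha > 0$ by assumption, giving hypothesis (i). Because $a_j \geq 0$ and $b_i \geq 0$, the positive parts satisfy $c_k^+ = a_k$ for $k \leq n$ and $c_{n+i}^+ \equiv 0$. Consequently the positive-part auxiliary equation reduces to $\dot x(t) + \sum_{j=1}^{n} a_j(t) x(h_j(t)) = 0$, which is precisely equation \eqref{2}; by hypothesis this equation has a positive fundamental function, giving hypothesis (ii). With both hypotheses of Theorem~\ref{th6} in place, UES of \eqref{17} follows at once.

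There is essentially no hard step here: the entire argument is a relabeling. The only conceptual subtlety worth flagging is that the sign assumptions $a_j \geq 0$ and $b_i \geq 0$ in Theorem~\ref{th10} are exactly what ensures that dropping all negative terms from \eqref{17} recovers \eqref{2}, so that the positivity-of-fundamental-function hypothesis is transferred to the correct auxiliary equation. If some $a_j$ were permitted to be negative, the positive-part reduction would produce a different equation and the clean corollary would break down, forcing one to argue directly via, say, the Bohl–Perron approach of Lemma~\ref{lemma1}.
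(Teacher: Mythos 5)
Your reduction is correct: after relabeling \eqref{17} as an equation of form \eqref{2} with coefficients $a_1,\dots,a_n,-b_1,\dots,-b_\ell$, the sum-of-coefficients bound becomes exactly $A(t)\geq\alpha>0$ and the positive-part auxiliary equation of Theorem~\ref{th6} is precisely \eqref{2} (the $(-b_i)^+\equiv 0$ terms drop out), so Theorem~\ref{th6} yields UES of \eqref{17} at once. The paper states Theorem~\ref{th10} without proof, citing the same source as Theorem~\ref{th6}, and your derivation is essentially the intended route, so there is nothing to correct.
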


 Further, we present several  results on  preservation of UES property for equations with multiple delays.
The first theorem is a corollary of a theorem obtained for an impulsive equation. Certainly, every statement
for equations with impulses also holds true for non-impulsive  equations.

Consider 
\begin{equation}\label{20}
\dot{x}(t)+\sum_{j=1}^n  a_j(t)x(h_j(t))+\sum_{i=1}^{\ell}  b_i(t)x(g_i(t))=0,
\end{equation}
where for the parameters of \eqref{20} the same conditions hold as for those  in  \eqref{2}.

\begin{theorem}\cite{BB1995}\label{th17}
Assume that equation \eqref{2} is UES. 
There is $\nu>0$  such that if
\begin{equation*}\label{21}
\limsup_{s \rightarrow\infty} \int_{s}^{s+1}\sum_{i=1}^{\ell} |b_i(\zeta)|d \zeta <\nu
\end{equation*}
then equation \eqref{20} is UES.
\end{theorem}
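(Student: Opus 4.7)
The plan is to view \eqref{20} as a small perturbation of the UES equation \eqref{2} and apply the Bohl-Perron criterion (Lemma~\ref{lemma1}). Since \eqref{2} is UES, its fundamental function $X(t,s)$ admits a uniform exponential bound $|X(t,s)|\le \mathcal M\,e^{-\nu_0(t-s)}$ for some $\mathcal M,\nu_0>0$. I will use this kernel to absorb the extra terms $\sum_i b_i(t)x(g_i(t))$ into the right-hand side.

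First I would consider the non-homogeneous equation with zero initial data
$$
\dot x(t)+\sum_{j=1}^n a_j(t)x(h_j(t))=f(t)-\sum_{i=1}^\ell b_i(t)x(g_i(t)),\qquad x(t)=0,\ t\le t_0,
$$
for an arbitrary $f\in\mathbf L_\infty[t_0,\infty)$. By the variation-of-constants formula for \eqref{2},
$$
x(t)=\int_{t_0}^t X(t,s)\Bigl[f(s)-\sum_{i=1}^\ell b_i(s)x(g_i(s))\Bigr]\,ds.
$$
Define the operator $(Ty)(t)=\int_{t_0}^t X(t,s)\sum_{i=1}^\ell b_i(s)y(g_i(s))\,ds$ on $\mathbf L_\infty[t_0,\infty)$. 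If I can show $\|T\|<1$, then $I+T$ is invertible and $x=(I+T)^{-1}\bigl(\int_{t_0}^\cdot X(\cdot,s)f(s)\,ds\bigr)$, yielding $\|x\|\le C\|f\|$, which by Lemma~\ref{lemma1} gives UES of \eqref{20}.

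The key estimate is therefore
$$
\|Ty\|\le \mathcal M\,\|y\|\cdot \esssup_{t\ge t_0}\int_{t_0}^t e^{-\nu_0(t-s)}\sum_{i=1}^\ell|b_i(s)|\,ds.
$$
To control the right-hand side I would split the integration interval into unit pieces: for $t\ge t_0+1$,
$$
\int_{t_0}^t e^{-\nu_0(t-s)}\sum_{i=1}^\ell|b_i(s)|\,ds
\le \sum_{k=0}^{\infty} e^{-\nu_0 k}\int_{t-k-1}^{t-k}\sum_{i=1}^\ell|b_i(s)|\,ds.
$$
By the hypothesis $\limsup_{s\to\infty}\int_s^{s+1}\sum_i|b_i(\zeta)|\,d\zeta<\nu$, each inner integral is at most $\nu+\varepsilon$ for $t$ large, so the whole sum is bounded by $(\nu+\varepsilon)/(1-e^{-\nu_0})$. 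A finite initial segment $[t_0,t_0+N]$ is handled by essential boundedness of $b_i$ (built into our measurability/boundedness hypotheses) combined with the exponential weight $e^{-\nu_0(t-t_0-N)}$, which is small for $t$ large; by enlarging $\mathcal M$ we absorb the corresponding transient. Choosing $\nu<(1-e^{-\nu_0})/(2\mathcal M)$ then forces $\|T\|<1$.

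The main obstacle, and the only non-routine step, is precisely this passage from the \emph{averaged} $\limsup$ condition on $\sum_i|b_i|$ to a \emph{pointwise-in-$t$} bound on the convolution $\int_{t_0}^t e^{-\nu_0(t-s)}\sum_i|b_i(s)|\,ds$ that is uniform on $[t_0,\infty)$. The dyadic/unit-interval splitting above is the natural device, but care is needed because the hypothesis is only asymptotic: one must separately dispose of the initial segment, which is why the contraction argument has to be combined with a uniform-in-$t_0$ bound (and $\mathcal M,\nu_0$ are themselves independent of $t_0$ by UES of \eqref{2}). Once the contraction estimate is uniform in $t_0$, Lemma~\ref{lemma1} delivers UES of \eqref{20}.
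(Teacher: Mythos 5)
Your overall scheme --- rewriting \eqref{20} as the inhomogeneous version of \eqref{2} with forcing $f(t)-\sum_{i=1}^{\ell}b_i(t)x(g_i(t))$, representing the solution through the exponentially bounded fundamental function $X(t,s)$ of \eqref{2}, estimating the resulting Volterra operator, and closing with Lemma~\ref{lemma1} --- is exactly the Bohl--Perron perturbation machinery this section is built on (the paper itself does not reprove the theorem, it cites \cite{BB1995}), and the unit-interval splitting of the convolution is the right device for converting the averaged hypothesis into a bound on $\int_{t_0}^t e^{-\nu_0(t-s)}\sum_i|b_i(s)|\,ds$.

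The gap is in the step ``$\|T\|<1$.'' The hypothesis is only a $\limsup$, so on an initial segment $\int_s^{s+1}\sum_i|b_i(\zeta)|\,d\zeta$ is controlled only by the essential bound of the $b_i$, not by $\nu$; consequently, for $t$ of moderate size (near $t_0$, or near the time $T^*$ after which the unit-integral bound starts to hold) the convolution can be of order $\bigl\|\sum_i|b_i|\bigr\|/\nu_0$, and the essential supremum over \emph{all} $t\ge t_0$ --- hence $\|T\|$ --- need not be smaller than $1$ for any choice of $\nu$. ``Enlarging $\mathcal M$'' cannot absorb this transient: $\mathcal M$ enters multiplicatively in the operator-norm estimate, and if $\|T\|\ge 1$ the Neumann-series inversion of $I+T$ is simply unavailable. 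The missing idea is the standard preliminary reduction: choose $T^*$ so that $\int_s^{s+1}\sum_i|b_i(\zeta)|\,d\zeta<\nu$ for all $s\ge T^*$, and run your entire argument for the problem with initial point $T^*$ (equivalently, replace $b_i$ by $0$ on $[t_0,T^*]$); then the splitting gives $\|T\|\le \mathcal M\nu/(1-e^{-\nu_0})<1$, Lemma~\ref{lemma1} yields UES from $T^*$ on, and the finite interval $[t_0,T^*]$ is handled afterwards, outside the contraction, by a Gronwall-type bound using the boundedness of all coefficients and delays --- this only enlarges the constant in the final exponential estimate, which is where the transient legitimately gets absorbed. A minor further point: since a priori the solution is only locally bounded, you should either perform the estimate on finite intervals $[T^*,t_1]$ with constants independent of $t_1$ (as in the proof of Theorem~\ref{th2}) or invoke the Volterra structure of $T$ to identify the actual solution with $(I+T)^{-1}$ applied to the bounded forcing term.
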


\begin{corollary}\label{cor3}
Assume that equation \eqref{2} is UES. Then any of the three conditions
$$\displaystyle \lim_{s \rightarrow\infty} \int_{s}^{s+1}\sum_{i=1}^{\ell}  |b_i(\zeta)|d\zeta=0 , \quad 
\lim_{s \rightarrow\infty} \sum_{i=1}^{\ell}  |b_i(s)|=0, \quad 
\int_{t_0}^{\infty}\sum_{i=1}^{\ell}  |b_i(\zeta)|d\zeta<\infty
$$
implies that 
\eqref{20} is UES.
\end{corollary}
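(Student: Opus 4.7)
The plan is to deduce this corollary from Theorem~\ref{th17} as a one-step reduction: it suffices to show that each of the three listed conditions forces
$$
L := \limsup_{s\to\infty} \int_s^{s+1} \sum_{i=1}^{\ell} |b_i(\zeta)|\, d\zeta = 0,
$$
since then $L$ is strictly below the $\nu>0$ provided by Theorem~\ref{th17}, and UES of \eqref{20} follows directly. In particular, the sharp value of $\nu$ is irrelevant; only its positivity is used.

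First, the first condition literally says that the limit (not merely the limsup) of these integrals equals $0$, so $L=0$ is immediate and Theorem~\ref{th17} applies. For the second condition, the pointwise decay $\sum_{i=1}^\ell |b_i(s)|\to 0$ combined with the standing essential-boundedness assumption from Section~\ref{sec:other} lets me argue directly: given $\varepsilon>0$, choose $T$ so that $\sum_i |b_i(s)|<\varepsilon$ for a.e.\ $s\ge T$; integrating on $[s,s+1]$ for $s\ge T$ gives an upper bound $\varepsilon$, hence $L\le\varepsilon$ and therefore $L=0$. For the third condition, the finiteness of $\int_{t_0}^\infty \sum_i |b_i(\zeta)|\,d\zeta$ means that the cumulative integral $F(s):=\int_{t_0}^s \sum_i |b_i(\zeta)|\,d\zeta$ is monotone with a finite limit, so $\int_s^{s+1}\sum_i|b_i(\zeta)|\, d\zeta = F(s+1)-F(s) \to 0$ as $s\to\infty$.

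I expect no serious obstacle: the whole point of Corollary~\ref{cor3} is to package three convenient, individually checkable hypotheses, each of which triggers Theorem~\ref{th17}. The only place requiring mild care is case two, where the ``limit'' of the measurable function $\sum_i |b_i|$ should be interpreted in the a.e.\ sense consistent with the ambient $\mathbf{L}_\infty$ framework; this does not affect the argument, since a.e.\ decay is exactly what integrates to produce decay of the one-unit averages. Once all three cases are reduced to $L=0$, the conclusion is a one-line invocation of Theorem~\ref{th17}.
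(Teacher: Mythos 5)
Your argument is correct and is exactly the intended route: each of the three conditions forces the unit-interval integrals $\int_s^{s+1}\sum_{i=1}^{\ell}|b_i(\zeta)|\,d\zeta$ to tend to zero, hence they eventually fall below the constant $\nu>0$ of Theorem~\ref{th17}, which immediately gives UES of \eqref{20}. This matches the paper's treatment, where the corollary is stated as a direct consequence of Theorem~\ref{th17} with no further ideas needed.
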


Let us investigate, together with  \eqref{2},  a model with the same number of delay terms
\begin{equation}\label{20a}
\dot{x}(t)+\sum_{j=1}^n b_j(t)x(g_j(t))=0.
\end{equation}
This is  a ``perturbation" of  \eqref{2}.
Assume also that for equation \eqref{20a} similar conditions as for \eqref{2} hold.

\begin{theorem}\cite{BB2011b}\label{th18}
Let equation \eqref{2} be  UES, $A(t)\geq a_0>0$,  where $A$ is defined in \eqref{A_definition}, and for some $\mu \in (0,1)$,
$$
\sum_{j=1}^n \left[\frac{|a_j(t)-b_j(t)|}{A(t)}+
\frac{|b_j(t)|}{A(t)}\left|\int_{h_j(t)}^{g_j(t)}\sum_{j=1}^n |b_j(s)| ds\right| ~ \right] \leq \mu<1.
$$
Then equation  \eqref{20a} is UES.
\end{theorem}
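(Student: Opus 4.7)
The plan is to invoke the Bohl--Perron criterion (Lemma~\ref{lemma1}) for equation~\eqref{20a}: it suffices to prove that, for every essentially bounded $f$, the solution of
\[
\dot{x}(t) + \sum_{j=1}^n b_j(t)\, x(g_j(t)) = f(t), \quad x(t) = 0 \text{ for } t \le t_0,
\]
is essentially bounded on $[t_0,\infty)$. First I would rewrite this equation in terms of the operator of the UES equation~\eqref{2},
\[
\dot{x}(t) + \sum_j a_j(t)\, x(h_j(t)) = f(t) + \sum_j \bigl[a_j(t)\, x(h_j(t)) - b_j(t)\, x(g_j(t))\bigr],
\]
and, denoting by $X(t,s)$ the fundamental function of \eqref{2} (so that $|X(t,s)| \le \mathcal{M} e^{-\nu(t-s)}$ and $\int_{t_0}^t |X(t,s)|\,ds \le \mathcal{M}/\nu$ by UES), represent $x$ by variation of parameters against $X$.

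The crux is a pointwise estimate of the perturbation whose structure mirrors the hypothesis. Split
\[
a_j(s)\, x(h_j(s)) - b_j(s)\, x(g_j(s)) = (a_j(s)-b_j(s))\, x(h_j(s)) + b_j(s)\, \bigl[x(h_j(s)) - x(g_j(s))\bigr],
\]
and, exploiting that $x$ obeys its own equation so $\dot{x}(\tau) = f(\tau) - \sum_k b_k(\tau)\, x(g_k(\tau))$, substitute
\[
x(h_j(s)) - x(g_j(s)) = -\int_{h_j(s)}^{g_j(s)} \dot{x}(\tau)\,d\tau = \int_{h_j(s)}^{g_j(s)} \Bigl[\sum_k b_k(\tau)\, x(g_k(\tau)) - f(\tau)\Bigr] d\tau.
\]
Setting $u(t) = \esssup_{s \in [t_0,t]} |x(s)|$, taking absolute values, and factoring $A(s)$ out of the terms involving $u$ so that the hypothesis applies directly, I expect the coefficient of $u$ to be bounded by $\mu A(s)$, producing
\[
|x(t)| \le C_1 \|f\| + \mu \int_{t_0}^t |X(t,s)|\, A(s)\, u(s)\,ds,
\]
where $C_1$ collects $\mathcal{M}/\nu$ and the bounded-delay contribution arising from the $f$-piece of $\dot{x}$.

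The main obstacle is converting this inequality into a uniform bound on $u$. Since $u$ is non-decreasing, it is enough to control $\int_{t_0}^t |X(t,s)|\, A(s)\,ds$ by some constant $K$ with $\mu K < 1$. One obtains such a bound from the identity that $y \equiv 1$ solves $\dot y + \sum_j a_j(t)\, y(h_j(t)) = A(t)$, together with UES of \eqref{2} (which controls $|1 - \int_{t_0}^t X(t,s) A(s)\,ds|$) and the standing hypothesis $A(t) \ge a_0 > 0$. The normalisation by $A(s)$ built into the statement of the theorem is precisely what makes $K$ effectively at most $1$, so that $\mu < 1$ suffices. Absorbing the $u$-term then yields $u(t) \le C_1\|f\|/(1-\mu K)$, and Lemma~\ref{lemma1} delivers UES of \eqref{20a}. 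Care is also required near $t = t_0$, where $h_j(s), g_j(s) < t_0$ can occur; this is harmless because $x \equiv 0$ there by the initial condition.
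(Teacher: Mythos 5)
Your architecture is the natural one (and surely the intended one): Bohl--Perron via Lemma~\ref{lemma1}, rewriting the perturbed problem against equation \eqref{2}, the split $(a_j-b_j)x(h_j)+b_j\,[x(h_j)-x(g_j)]$, and back-substitution of $\dot x=f-\sum_k b_k x(g_k)$, which reproduces exactly the structure of the hypothesis. The gap is at the step you yourself call the main obstacle. You need $\sup_t\int_{t_0}^t |X(t,s)|\,A(s)\,ds\le K$ with $\mu K<1$, and you assert $K$ is ``effectively at most $1$'' from the identity for $y\equiv 1$. That identity only controls the \emph{signed} kernel: if $z(t)=\int_{t_0}^t X(t,s)A(s)\,ds$, then $w=1-z$ (extended by $1$ for $t\le t_0$) solves \eqref{2} with initial function $\psi\equiv 1$, so UES gives $|1-z(t)|\le \mathcal M e^{-\nu(t-t_0)}$; this bounds $z$ by $1+\mathcal M$, not by $1$, and says nothing about $\int_{t_0}^t|X(t,s)|A(s)\,ds$. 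Since in your inequality the factor $u(s)$ varies with $s$, you cannot replace $|X(t,s)|$ by $X(t,s)$ and use the signed identity; converting to the absolute-value kernel requires $X(t,s)\ge 0$, i.e.\ nonoscillation of \eqref{2}, which is assumed in the companion result Theorem~\ref{th19} (and in the framework of \cite{BB2011b}) but is not among the hypotheses you used. With UES alone you only get $K=\mathcal M\|A\|/\nu$, and your absorption argument then needs $\mu<\nu/(\mathcal M\|A\|)$ rather than $\mu<1$.

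The gap is not a technicality that a sharper estimate could fix under the assumptions as you invoked them. Take $n=1$, $a_1\equiv a>0$, $h_1(t)=t-\tau$ with $a\tau=1.5<\pi/2$: equation \eqref{2} is UES, $A\equiv a\ge a_0>0$, but the fundamental function oscillates and $a\int_{t_0}^{\infty}|X(t,s)|\,ds$ is far larger than $1$. Choosing $b_1\equiv 1.9\,a$ and $g_1=h_1$ makes the second term of the condition vanish and the first equal to $0.9\le\mu<1$, yet $\dot x(t)+1.9a\,x(t-\tau)=0$ has $1.9\,a\tau>\pi/2$ and is not even asymptotically stable. So no argument can deliver the constant $1$ from UES and $A\ge a_0>0$ alone; the missing ingredient is precisely positivity of the fundamental function of \eqref{2} (nonoscillation), under which $0\le\int_{t_0}^t X(t,s)A(s)\,ds\le 1$ does hold and your absorption step with $\mu<1$ goes through verbatim. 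You should either add that hypothesis (as in Theorem~\ref{th19}) or accept the weaker smallness condition $\mu\,\mathcal M\|A\|/\nu<1$; as written, the justification of the key kernel bound is invalid.
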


Next, consider delay perturbations for the same coefficients.

\begin{theorem}\cite{BB2011b}\label{th19}
Let the coefficients be nonnegative $a_j(t)\geq 0$ and their sum be separated from zero  $A(t)\geq a_0>0$, where $A$ is defined in \eqref{A_definition},  and \eqref{2}  be nonoscillatory, i.e. its fundamental function be  positive.
The inequality
$$
\limsup_{s \rightarrow\infty} \sum_{j=1}^n \frac{a_j(s)}{A(s)}\left|\int_{h_j(s)}^{g_j(s)} A(\zeta)d \zeta\right|<1
$$
yields that
$$
\dot{x}(t)+\sum_{j=1}^n a_j(t)x(g_j(t))=0
$$
is UES.
\end{theorem}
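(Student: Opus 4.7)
}

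The natural route is the Bohl--Perron scheme (Lemma \ref{lemma1}). Since the fundamental function $X(t,s)$ of the original equation \eqref{2} with delays $h_j$ is positive by hypothesis, and $A(t)\geq a_0>0$, Lemma \ref{lemma2} gives that \eqref{2} is UES, so $|X(t,s)|\leq M e^{-\nu(t-s)}$ for some $M,\nu>0$. In parallel, the positivity of $X$ combined with the identity obtained by applying variation of parameters to the constant $1$ (which solves $\dot{u}+\sum_j a_j(t)u(h_j(t))=A(t)$ with $u\equiv 1$) yields the key bound
\begin{equation*}
\int_{t_0}^{t}X(t,s)A(s)\,ds\;=\;1-X(t,t_0)\;\leq\;1, \qquad t\geq t_0.
\end{equation*}
I will use this repeatedly.

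Next, I would fix a bounded $f$ and consider the non-homogeneous perturbed equation $\dot{x}(t)+\sum_j a_j(t)x(g_j(t))=f(t)$ with zero initial function. Writing $x(g_j(t))=x(h_j(t))+\int_{h_j(t)}^{g_j(t)}\dot{x}(\zeta)\,d\zeta$ recasts it as the original $h_j$-equation driven by a modified right-hand side. Variation of parameters then gives
\begin{equation*}
x(t)\;=\;\int_{t_0}^{t}X(t,s)f(s)\,ds\;-\;\sum_{j=1}^{n}\int_{t_0}^{t}X(t,s)a_j(s)\!\int_{h_j(s)}^{g_j(s)}\!\dot{x}(\zeta)\,d\zeta\,ds.
\end{equation*}
Now I substitute $\dot{x}(\zeta)=f(\zeta)-\sum_k a_k(\zeta)x(g_k(\zeta))$ into the inner integral. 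The terms containing $f$ are bounded in absolute value by $C\|f\|$ using $\int_{t_0}^{t}X(t,s)\,ds\leq a_0^{-1}$ and essential boundedness of the delays $t-h_j(t)$, $t-g_j(t)$. The remaining term contributes
\begin{equation*}
\sum_{j=1}^{n}\int_{t_0}^{t}X(t,s)a_j(s)\left|\int_{h_j(s)}^{g_j(s)}A(\zeta)\,d\zeta\right|ds\cdot \|x\|_{[t_0,t]},
\end{equation*}
after pulling out $\|x\|_{[t_0,t]}$.

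To close the estimate, pick $q<1$ so that by hypothesis there exists $T\geq t_0$ with $\sum_j \frac{a_j(s)}{A(s)}\bigl|\int_{h_j(s)}^{g_j(s)}A(\zeta)\,d\zeta\bigr|\leq q$ for all $s\geq T$. Split the integral at $T$: on $[T,t]$ the integrand is $\leq q\, X(t,s)A(s)$, which integrates to at most $q\cdot 1=q$; on $[t_0,T]$ the factor $X(t,s)\leq M e^{-\nu(t-s)}$ absorbs the finite supremum of the bracketed quantity and produces a term decaying in $t$, hence bounded by some $\varepsilon$ eventually (and bounded always). Thus the coefficient of $\|x\|_{[t_0,t]}$ is at most $q+\varepsilon<1$, and rearranging gives $\|x\|_{[t_0,t]}\leq C'\|f\|/(1-q-\varepsilon)$ uniformly in $t$. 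Applying Lemma \ref{lemma1} concludes UES of the perturbed equation.

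The main obstacle I anticipate is the substitution step combined with justifying a finite a priori bound for $\|x\|$: one must argue on finite intervals $[t_0,T']$ and verify that the bound derived is $T'$-independent, so the sup over all $t$ is legitimately finite. A second care point is controlling the tail contribution from $[t_0,T]$ when the bracket $\bigl|\int_{h_j(s)}^{g_j(s)}A\bigr|$ is only known to be essentially bounded (via boundedness of $A$ and of the deviations $t-h_j(t),t-g_j(t)$); this is where the exponential factor $e^{-\nu(t-T)}$ from UES of the $h_j$-equation is essential.
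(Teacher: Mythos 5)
Your overall route is the right one and is essentially the standard proof of this result (the paper itself gives no proof of Theorem~\ref{th19}, only the citation to \cite{BB2011b}; the argument there is of exactly this Bohl--Perron type): rewrite the $g_j$-equation as the nonoscillatory $h_j$-equation forced by $\sum_j a_j(t)\int_{h_j(t)}^{g_j(t)}\dot{x}(\zeta)\,d\zeta$, estimate through the positive fundamental function, split the integral at a point $T$ beyond which the $\limsup$ hypothesis gives a bound $q<1$, and close with Lemma~\ref{lemma1}; the finite-interval and $T'$-independence issues you flag are routine. The one genuine flaw is the justification of your ``key bound''. Variation of parameters applied to $u\equiv 1$ does \emph{not} give $\int_{t_0}^{t}X(t,s)A(s)\,ds=1-X(t,t_0)$: since $u$ has the nonzero prehistory $u\equiv 1$ on $(-\infty,t_0]$, the initial function enters the representation formula as an extra forcing term, and the correct identity is
$$
\int_{t_0}^{t}X(t,s)A(s)\,ds\;=\;1-X(t,t_0)+\sum_{j=1}^{n}\int_{t_0}^{t}X(t,s)\,a_j(s)\,\chi_{\{h_j(s)<t_0\}}\,ds ,
$$
whose last term is nonnegative. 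So your computation only yields the lower bound $\geq 1-X(t,t_0)$, not the upper bound $\leq 1$ that you use repeatedly.

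The inequality $\int_{t_0}^{t}X(t,s)A(s)\,ds\leq 1$ is nevertheless true under the hypotheses, but it requires an argument: it is exactly the statement that $z(t)=1-\int_{t_0}^{t}X(t,s)A(s)\,ds$, which solves \eqref{2} with initial function identically $1$, stays nonnegative. This follows from nonoscillation theory: positivity of $X$ yields a nonnegative $u$ with $u(t)\geq\sum_j a_j(t)\exp\bigl(\int_{\max\{h_j(t),t_0\}}^{t}u(\zeta)\,d\zeta\bigr)$, the function $v(t)=\exp\bigl(-\int_{t_0}^{t}u\bigr)$ (extended by $1$ to the past) is then a positive sub-solution with the same unit initial data, and the representation formula together with $X>0$ gives $z\geq v>0$; this is a standard lemma in the Berezansky--Braverman papers and should be quoted or proved, not derived from the faulty identity. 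Alternatively, your own computation can be salvaged without that lemma: the prehistory term above is supported on $s\in[t_0,t_0+\max_j\tau_j]$ (bounded delays), hence is at most $c\,M e^{-\nu(t-t_0-\max_j\tau_j)}$ by the exponential estimate you already obtained from Lemma~\ref{lemma2}, so $\int_{t_0}^{t}X(t,s)A(s)\,ds\leq 1+o(1)$ as $t\to\infty$, which still closes the contraction since the hypothesis furnishes a strict $q<1$. With either repair, the remaining steps (substituting $\dot{x}$ from the equation, truncating the inner integrals at $t_0$, the split at $T$, the Gronwall bound on the initial finite interval, and the appeal to Lemma~\ref{lemma1}) go through.
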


\subsection{Equations with two delayed terms}

In the following result, there are no limitations on the signs of coefficients, only on the sign of their sum.

\begin{theorem}\cite{BB2007}\label{th5}
Assume that there is a positive constant $a_0>0$ for which $\alpha(t)+\beta(t)\geq a_0$ and
$$
\int_{h(t)}^t (\alpha(\zeta)+\beta(\zeta))d\zeta \leq \frac{1}{e}, \quad \limsup_{t\rightarrow\infty}\frac{|\beta(t)|}{\alpha(t)+\beta(t)}
\left|\int_{h(t)}^{g(t)}(\alpha(\zeta)+\beta(\zeta))d \zeta\right|<1.
$$
Then the equation
$$
\dot{x}(t)+\alpha(t)x(h(t))+\beta(t)x(g(t))=0
$$
is UES.
\end{theorem}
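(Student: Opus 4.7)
The plan is to apply the Bohl--Perron criterion (Lemma~\ref{lemma1}): it suffices to show that for every $f\in\mathbf{L}_\infty[t_0,\infty)$ the solution of the inhomogeneous problem
$$
\dot x(t)+\alpha(t)x(h(t))+\beta(t)x(g(t))=f(t),\qquad x(t)=0 \text{ for } t\leq t_0,
$$
is essentially bounded on $[t_0,\infty)$, with norm controlled by $\|f\|$.

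The first move is to collapse the two delay terms onto the single argument $h(t)$. Setting $A(t)=\alpha(t)+\beta(t)$ and using $x(g(t))=x(h(t))+\int_{h(t)}^{g(t)}\dot x(s)\,ds$ transforms the equation into
$$
\dot x(t)+A(t)\,x(h(t))=-\beta(t)\int_{h(t)}^{g(t)}\dot x(s)\,ds+f(t),
$$
which exhibits it as a perturbation of the single-delay auxiliary equation $\dot y(t)+A(t)y(h(t))=F(t)$. The two standing hypotheses $A(t)\geq a_0>0$ and $\int_{h(t)}^t A(\zeta)\,d\zeta\leq 1/e$ are precisely the conditions of Lemma~\ref{lemma34}(1), so the homogeneous auxiliary equation admits a positive fundamental function; Lemma~\ref{lemma2} upgrades this to UES, and Lemma~\ref{lemma1} then supplies a uniform constant $C>0$ with $\|y\|_\infty\le C\|F\|_\infty$ for the zero-initial solution of the non-homogeneous auxiliary equation with any $F\in\mathbf{L}_\infty$.

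The third step couples the two pieces. Applying the \emph{a priori} bound with $F(t)=-\beta(t)\int_{h(t)}^{g(t)}\dot x(s)\,ds+f(t)$, and then replacing $\dot x(s)$ inside the inner integral via the identity $\alpha(s)x(h(s))+\beta(s)x(g(s))=A(s)x(h(s))+\beta(s)\bigl(x(g(s))-x(h(s))\bigr)$, one extracts the \emph{signed} integral $\int_{h(t)}^{g(t)}A(\zeta)\,d\zeta$ rather than the cruder $\int(|\alpha|+|\beta|)$. Combined with the hypothesis $\limsup_t\frac{|\beta(t)|}{A(t)}\bigl|\int_{h(t)}^{g(t)}A(\zeta)\,d\zeta\bigr|<1$ this yields an inequality
$$
\|x\|_\infty \le q\,\|x\|_\infty + K\|f\|_\infty
$$
with $q<1$ and a finite constant $K$, valid up to a bounded contribution from a finite transient interval $[t_0,T_0]$ that is handled by continuity of the fundamental function. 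Closing this inequality gives the boundedness required by Lemma~\ref{lemma1} and hence UES.

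The main obstacle is the coupling step: a naive triangle-inequality estimate after substituting $\dot x$ loses the cancellation inside $\alpha+\beta$, replacing $A$ by $|\alpha|+|\beta|$ and producing a contraction coefficient generically strictly larger than what the hypothesis allows. One must therefore group the terms exactly as above so that the \emph{signed} sum $A$ survives in the iterated bound. The secondary technicality of a $\limsup$ rather than a $\sup$ in the second hypothesis, which delivers smallness of $q$ only for $t\geq T_0$, is handled by absorbing the short initial segment into the constant $K$ via the standard continuation argument.
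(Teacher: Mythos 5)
Your overall strategy --- Bohl--Perron (Lemma~\ref{lemma1}) applied to the equation rewritten as a perturbation of the one-delay equation $\dot y(t)+A(t)y(h(t))=0$, whose nonoscillation and UES you correctly obtain from Lemmas~\ref{lemma34} and \ref{lemma2} --- is the right family of arguments (the paper itself gives no proof, citing \cite{BB2007}, but announces Bohl--Perron as the tool for this whole section). The gap is in the coupling step, which is precisely the non-trivial part. First, a generic input--output bound $\|y\|\le C\|F\|$ obtained from UES of the auxiliary equation cannot produce the threshold $1$: the constant $C$ is not $1$, so the product of $C$ with the hypothesis quantity need not be less than $1$. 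To make the number $1$ appear one must use the positivity of the fundamental function $Y(t,s)$ of the auxiliary equation quantitatively, namely the estimate $0\le\int_{t_0}^{t}Y(t,s)A(s)\,ds\le 1$, keeping the weight $A(s)$ attached when the perturbation is estimated (this is exactly why the hypothesis is normalized by $A(t)$). You establish positivity of $Y$ but never use it beyond concluding UES.

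Second, the claimed regrouping does not extract the signed integral $\int_{h}^{g}A$ with the stated contraction constant. Substituting $\dot x$ from the original equation gives $|\alpha|+|\beta|$ inside the inner integral, which the hypothesis does not control; substituting from the regrouped form $\dot x=-A\,x(h)-\beta\,\bigl(x(g)-x(h)\bigr)+f$ makes the very difference $x(g)-x(h)$ you are estimating reappear, and closing the resulting system of inequalities (say in the variables $\|x\|_J$ and $\sup_J\frac{|\beta|}{A}\bigl|x(g)-x(h)\bigr|$) requires the hypothesis quantity to be below $\tfrac12$, not $1$ --- compare the constant $\tfrac12$ occupying exactly this position in Theorem~\ref{th12}, part 1; the alternative crude bound $|x(g(u))-x(h(u))|\le 2\|x\|_J$ yields the integrand $A+2|\beta|$, again uncontrolled. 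So the inequality $\|x\|\le q\|x\|+K\|f\|$ with $q<1$ is asserted rather than proved; an additional idea is needed to reach the constant $1$ (essentially a neutral-type treatment of the term $\beta(t)\int_{h(t)}^{g(t)}\dot x$, with $\dot x$ estimated in an $A$-weighted norm through the positive fundamental function, or a refined two-dimensional estimate in the spirit of Theorem~\ref{th2}). A minor further point: the contraction must be run on finite intervals $[t_0,t_1]$ with constants independent of $t_1$, since $\|x\|$ on the half-line is not known to be finite a priori; your transient remark addresses the $\limsup$ issue but not this one.
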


Further, let one term be positive dominating, the other term be negative. We start with the case when the positive term also has a larger delay.

\begin{theorem}\cite{BB2009a}\label{th7}
Let
$$
\alpha(t)\geq \beta(t)\geq 0, \quad t\geq g(t)\geq h(t), 
\quad \limsup_{t\rightarrow\infty}\int_{h(t)}^t \left[\alpha(\zeta)-\frac{1}{e}\beta(\zeta)\right]d\zeta <\frac{1}{e}  \, .
$$
Then the fundamental function  of
\begin{equation}\label{15}
\dot{x}(t)+\alpha(t)x(h(t))-\beta(t)x(g(t))=0
\end{equation}
is positive (leading to the nonoscillatory property of the equation). If  the sum is not only nonnegative, but also is separated from zero $\alpha(t)-\beta(t)\geq a_0 >0$,
this leads to UES of  \eqref{15}.
\end{theorem}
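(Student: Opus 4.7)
The plan is to split the proof according to the two conclusions of the theorem.

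\emph{Part 1 (positivity of the fundamental function).} I would use the exponential substitution $x(t) = \exp\bigl(-\int_{t_0}^t u(s)\,ds\bigr)$, which converts \eqref{15} into the generalized characteristic equation
\[
u(t) = \alpha(t)\,e^{\int_{h(t)}^t u(s)\,ds} - \beta(t)\,e^{\int_{g(t)}^t u(s)\,ds}.
\]
Producing a bounded nonnegative measurable solution $u_\ast$ of this equation yields a positive solution $x_\ast$ of \eqref{15}, and the standard nonoscillation/comparison principle for linear delay equations (cf.\ the monograph cited in the excerpt for Lemma~\ref{lemma2}) then forces the fundamental function itself to be positive. Since $h(t)\le g(t)$, the right-hand side factors as $e^{\int_{g(t)}^t u}\bigl[\alpha(t)\,e^{\int_{h(t)}^{g(t)} u}-\beta(t)\bigr]$, which is nonnegative whenever $u\ge 0$, thanks to $\alpha\ge\beta$. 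Hence the Picard-type iteration $u_0\equiv 0$, $u_{n+1}=T[u_n]$, with $T$ the right-hand side above, preserves nonnegativity and is monotone nondecreasing. The crucial step is the a priori bound $u_n(t)\le e\,\alpha(t)$, which I would extract from the hypothesis $\limsup_{t\to\infty}\int_{h(t)}^t [\alpha(\zeta)-\beta(\zeta)/e]\,d\zeta<1/e$: assuming $u_n\le e\alpha$ and inserting this on the right, the claim reduces to the scalar inequality
\[
\alpha(t)\,e^{e\int_{h(t)}^t \alpha(s)\,ds} - \beta(t)\,e^{e\int_{g(t)}^t \alpha(s)\,ds}\le e\,\alpha(t),
\]
which, after dividing by $\alpha(t)$ and using $h\le g$, is equivalent to the given integral bound. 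Monotone convergence then delivers a fixed point $u_\ast\ge 0$, and $x_\ast(t)=\exp\bigl(-\int_{t_0}^t u_\ast(s)\,ds\bigr)>0$ is the required positive solution.

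\emph{Part 2 (UES when $\alpha-\beta\ge a_0>0$).} My strategy is to compare \eqref{15} with the auxiliary one-delay equation
\[
\dot{y}(t)+(\alpha(t)-\beta(t))\,y(h(t))=0.
\]
Since $\int_{h(t)}^t (\alpha-\beta)\le \int_{h(t)}^t(\alpha-\beta/e)<1/e$, Lemma~\ref{lemma34} applies and yields a positive fundamental function for the auxiliary equation, while Lemma~\ref{lemma2} (using $\alpha-\beta\ge a_0>0$) then furnishes its UES. To transfer the exponential decay to \eqref{15}, I would rewrite the latter as
\[
\dot{x}(t)+(\alpha(t)-\beta(t))\,x(h(t)) = -\beta(t)\,\bigl(x(h(t))-x(g(t))\bigr),
\]
and use positivity of the fundamental function of \eqref{15}, combined with $h\le g$, to argue that $X(h(\xi),s)-X(g(\xi),s)\ge 0$; consequently the right-hand side, viewed as a forcing term on the auxiliary UES equation, is nonpositive along $X(\cdot,s)$. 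A variation-of-parameters comparison then yields a uniform exponential bound $|X(t,s)|\le \mathcal{M}e^{-\nu(t-s)}$, which is precisely UES.

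\emph{Main obstacle.} I expect the principal difficulty to be the a priori bound $u_n\le e\alpha$ in Part 1 and the monotonicity-type comparison in Part 2; both must extract the strict gap $1/e-\limsup\int_{h(t)}^t(\alpha-\beta/e)>0$ in a quantitatively sharp way, which is where the specific constant $1/e$ in the hypothesis is essential and where any loss in the iterative estimate would break the argument.
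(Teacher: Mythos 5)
The paper only cites this theorem (from \cite{BB2009a}) without reproducing a proof, so your proposal must stand on its own; as written it has two genuine gaps. In Part 1 your framework (generalized characteristic equation plus monotone iteration on the cone $u\ge 0$, using the factorization $T[u]=e^{\int_{g(t)}^t u}\bigl[\alpha(t)e^{\int_{h(t)}^{g(t)}u}-\beta(t)\bigr]$) is sound, but the crucial a priori bound is wrong: the invariance $T[e\alpha]\le e\alpha$ is \emph{not} equivalent to, nor implied by, the hypothesis $\limsup_{t\to\infty}\int_{h(t)}^t[\alpha-\beta/e]\,d\zeta<1/e$. Concretely, take a point $t$ with $h(t)=t-1$, $\alpha=\beta=0.55$ on $[t-1,t-0.1]$ and $\alpha=0.3$, $\beta=0$ on $(t-0.1,t]$: then $\int_{h(t)}^t[\alpha-\beta/e]\approx 0.343<1/e$, yet $T[e\alpha](t)=\alpha(t)e^{e\int_{h(t)}^t\alpha}\approx 1.25> e\,\alpha(t)\approx 0.82$, because $\beta(t)=0$ kills the negative term pointwise while $\int\beta$ is what allowed $\int\alpha$ to exceed $1/e$. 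The hypothesis is tailored to the test function $v=e\alpha-\beta$ (i.e.\ $e[\alpha-\beta/e]$), not $e\alpha$: since $\int_{h(t)}^t v\le 1$ and $e^{\int_{g(t)}^t v}\ge 1$, one gets $T[v]\le e\alpha-\beta=v$, and your monotone iteration then closes on the order interval $[0,v]$. Also note that "positive solution $\Rightarrow$ positive fundamental function'' is not the standard nonnegative-coefficient equivalence; for an equation with a negative coefficient it needs the specific result valid under $\alpha\ge\beta\ge0$, $h\le g$ (available in the cited monograph), so it should be invoked as such rather than as the generic comparison principle.

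In Part 2 the transfer step fails as stated. You need the forcing term $-\beta(t)\bigl(x(h(t))-x(g(t))\bigr)$ to be nonpositive along $x(\cdot)=X(\cdot,s)$, i.e.\ $X(h(t),s)\ge X(g(t),s)$; but near the initial point this is false in general, since $X(h(t),s)=0$ when $h(t)<s$ while $X(g(t),s)>0$ is possible, and on that initial strip $X(\cdot,s)$ can rise above $1$, so monotonicity of $X(\cdot,s)$ (which your inequality implicitly requires on $[h(t),g(t)]$) is not available even for later $t$ without a separate argument. The route consistent with the machinery of this paper is not a pointwise comparison of fundamental functions but the Bohl--Perron lemma (Lemma~\ref{lemma1}) or the positivity-plus-domination results (the analogue of Lemma~\ref{lemma2} / Theorem~\ref{th10} for equations with a positive and a negative term): once Part 1 gives a positive fundamental function for \eqref{15} and $\alpha-\beta\ge a_0>0$ with bounded coefficients and delays, one estimates solutions of the nonhomogeneous equation and concludes UES. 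Your auxiliary equation $\dot y+(\alpha-\beta)y(h(t))=0$ and the application of Lemmas~\ref{lemma34} and \ref{lemma2} to it are fine, but they do not by themselves yield the exponential bound for \eqref{15} without repairing the comparison step.
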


Next, consider \eqref{15}  without delay hierarchy, as long as the difference between the two delays is small enough.

\begin{theorem}\cite{BB2019}\label{th12}
Let $\alpha(t)\geq 0$,  $\beta(t)\geq 0$ be such that their difference is separated from zero  $\alpha(t)- \beta(t)\geq a_0 >0$ and either
\\
1)
$ \displaystyle 
\limsup_{t\rightarrow\infty}\int_{h(t)}^t [\alpha(\zeta)-\beta(\zeta)]d \zeta \leq \frac{1}{e},~
\left\|\frac{\beta}{\alpha-\beta}\right\| \left\|\int_{h(\cdot)}^{g(\cdot)}[\alpha(\zeta)-\beta(\zeta)]d\zeta \right\|<\frac{1}{2}
$
\\
or
\\
2)
$ \displaystyle 
\limsup_{t\rightarrow\infty}\int\limits_{h(t)}^t  [\alpha(\zeta)-\beta(\zeta)]d\zeta> \frac{1}{e},
\left\|\int_{h(\cdot)}^{(\cdot)} \!\!\! \!\! [\alpha(\zeta)-\beta(\zeta)]d\zeta \right\|+
2\left\|\frac{\beta}{\alpha-\beta}\right\| \left\|\int_{h(\cdot)}^{g(\cdot)}  \!\!\!\! [\alpha(\zeta)-\beta(\zeta)]d\zeta \right\|<1+\frac{1}{e},
$
\\
equation \eqref{15} is UES.
\end{theorem}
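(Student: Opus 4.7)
The plan is to apply the Bohl–Perron criterion (Lemma~\ref{lemma1}) to the non-homogeneous version of \eqref{15} with zero initial function,
$$
\dot{x}(t)+\alpha(t)x(h(t))-\beta(t)x(g(t))=f(t),\qquad x(s)=0,\ s\leq t_0,
$$
and show that $x\in {\mathbf L}_\infty[t_0,\infty)$ whenever $\|f\|<\infty$, with the bound independent of $t_0$. Setting $A(t):=\alpha(t)-\beta(t)\geq a_0>0$, I would first rewrite the equation as a perturbation of a simpler delay equation,
$$
\dot{x}(t)+A(t)x(h(t))=\beta(t)\bigl[x(g(t))-x(h(t))\bigr]+f(t),
$$
and establish UES of the auxiliary equation $\dot{y}(t)+A(t)y(h(t))=0$. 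In case~1) the hypothesis $\int_{h(t)}^t A(\zeta)\,d\zeta\leq 1/e$ triggers Lemma~\ref{lemma34}(1), giving a positive fundamental function, which together with $A\geq a_0>0$ and Lemma~\ref{lemma2} yields UES of the auxiliary equation. In case~2), the bound $\|\int_{h(\cdot)}^{\,\cdot}A\|<1+1/e$ lets me invoke Theorem~\ref{th11} applied to the single-term equation, again giving UES.

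Let $Y(t,s)$ be the fundamental function of the auxiliary equation, so $|Y(t,s)|\leq \mathcal{M}e^{-\nu(t-s)}$. By the variation-of-constants formula,
$$
x(t)=\int_{t_0}^t Y(t,s)\,\beta(s)\bigl[x(g(s))-x(h(s))\bigr]\,ds+\int_{t_0}^t Y(t,s)f(s)\,ds.
$$
The second integral is uniformly bounded by a multiple of $\|f\|$. For the first, I would use
$$
x(g(s))-x(h(s))=\int_{h(s)}^{g(s)}\dot{x}(\xi)\,d\xi
$$
and substitute $\dot{x}(\xi)=-A(\xi)x(h(\xi))+\beta(\xi)[x(g(\xi))-x(h(\xi))]+f(\xi)$ from the equation itself. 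This expresses $x(g(s))-x(h(s))$ as an integral functional of $x$ weighted by $A$ and $\beta$. Pulling the factor $\beta/A$ outside and using the a~priori bound $|x(g(\xi))-x(h(\xi))|\leq 2\|x\|$ in the secondary term, the operator $x\mapsto Tx$ obtained on the right-hand side acquires a $L_\infty$-norm estimate whose dominant factor is precisely $\|\beta/(\alpha-\beta)\|\cdot\|\int_{h(\cdot)}^{g(\cdot)}A\|$ (from the first substitution) plus twice the same quantity (from the secondary one).

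In case~1), where $\int_{h(t)}^t A\leq 1/e$, I would exploit positivity of $Y$ and the standard identity $\int_{t_0}^t Y(t,s)A(s)\,ds\leq 1$ (valid for first-order positive-kernel equations), together with $\int_{h(s)}^{g(s)}A$ appearing twice, to absorb one power into this identity; the remaining factor must be bounded by $1/2$, matching the stated constant. In case~2), where positivity is lost, the looser bound from Theorem~\ref{th11} replaces the identity, and the sum $\|\int_{h(\cdot)}^{\,\cdot}A\|+2\|\beta/(\alpha-\beta)\|\|\int_{h(\cdot)}^{g(\cdot)}A\|<1+1/e$ is exactly what makes the resulting operator a strict contraction. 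In either case, contraction yields $\|x\|\leq C\|f\|$ uniformly in $t_0$, and Lemma~\ref{lemma1} delivers UES.

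The main obstacle is the sharp book-keeping of constants in the two regimes: the factor $1/2$ in case~1) versus the asymmetric $1+1/e$ budget in case~2) reflects that in the positive-kernel regime one of the two copies of $\int_{h(\cdot)}^{g(\cdot)}A$ collapses into the identity $\int Y A\leq 1$, while in the non-positivity regime both copies must be controlled separately against the looser bound supplied by Theorem~\ref{th11}. Making the contraction estimate match these thresholds exactly, rather than producing weaker constants, is the delicate part of the argument.
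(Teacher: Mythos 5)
Your overall strategy is the natural one and matches the spirit of the cited source \cite{BB2019} (the paper itself gives no proof of Theorem~\ref{th12}, it only quotes it): Bohl--Perron via Lemma~\ref{lemma1}, rewriting \eqref{15} as $\dot x(t)+A(t)x(h(t))=\beta(t)\int_{h(t)}^{g(t)}\dot x(\xi)\,d\xi+f(t)$ with $A=\alpha-\beta\ge a_0>0$, then a positivity-based kernel estimate in case 1) and a $1+1/e$-type estimate in case 2). But the two quantitative steps that actually carry the theorem are not closed in your sketch. Set $\kappa:=\|\beta/(\alpha-\beta)\|\,\bigl\|\int_{h(\cdot)}^{g(\cdot)}(\alpha-\beta)\,d\zeta\bigr\|$. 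Following your case 1) bookkeeping --- substitute $\dot x(\xi)=-A(\xi)x(h(\xi))+\beta(\xi)[x(g(\xi))-x(h(\xi))]+f(\xi)$ and bound the secondary difference by $2\|x\|$ --- the perturbation part of the variation-of-constants formula is bounded, even using $\int_{t_0}^tY(t,s)A(s)\,ds\le1$, only by $\kappa\|x\|+2\kappa\,\|\beta/A\|\,\|x\|$ (plus $f$-terms): the factor $\|\beta/(\alpha-\beta)\|$ appears \emph{by itself} in the secondary term, and the hypothesis controls only the product $\kappa$, not $\|\beta/(\alpha-\beta)\|$ (take $\|\beta/A\|=10$, $\bigl\|\int_h^g A\bigr\|=0.04$: then $\kappa=0.4<1/2$ but your coefficient is $0.4(1+20)=8.4$). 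So ``absorbing one power into $\int YA\le 1$'' does not rescue the estimate, and $\kappa<1/2$ gives no contraction along your route. The missing idea is to treat $W:=\esssup_s|x(g(s))-x(h(s))|$ as an unknown: the same substitution yields $W\le\bigl\|\int_{h}^{g}A\bigr\|\,\|x\|+\kappa W+c\|f\|$, hence $W\le(1-\kappa)^{-1}\bigl(\bigl\|\int_h^gA\bigr\|\,\|x\|+c\|f\|\bigr)$, and the perturbation term is then at most $\frac{\kappa}{1-\kappa}\|x\|+c'\|f\|$; the contraction requirement $\frac{\kappa}{1-\kappa}<1$ is exactly $\kappa<\frac12$, which is where the stated constant comes from.

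Case 2) is asserted rather than derived: Theorem~\ref{th11} certifies UES of the auxiliary equation $\dot y+A(t)y(h(t))=0$, but it supplies no quantitative bound on $\int_{t_0}^t|Y(t,s)|A(s)\,ds$ --- positivity, hence the ``$\le 1$'' identity, is precisely what is lost when $\limsup\int_{h(t)}^tA>1/e$ --- so nothing in your setup makes $\bigl\|\int_{h(\cdot)}^{\cdot}A\bigr\|$ and $2\kappa$ combine additively against the budget $1+1/e$. To obtain condition 2) one must re-run the $1+1/e$ argument itself (the mechanism behind Theorem~\ref{th11}, with the non-delay kernel $e^{-\int A}$ and the $1/e$ splitting) while carrying the extra term $\beta(t)\int_{h(t)}^{g(t)}\dot x(\xi)\,d\xi$, not quote Theorem~\ref{th11} as a black box and ``add norms''. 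A smaller but genuine point: Lemma~\ref{lemma34}(1) requires the pointwise inequality $\int_{h(t)}^tA\,ds\le 1/e$, whereas hypothesis 1) gives only a $\limsup$; since $1/e$ is sharp for nonoscillation, positivity of $Y$ (and the constant $1$ in $\int YA\le1$) does not follow by simply passing to $[T,\infty)$ with $\int\le 1/e+\varepsilon$, and this borderline needs an explicit argument.
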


\subsection{Equations with a non-delay term}

Let one of the terms include no delays
\begin{equation}\label{19a}
\dot{x}(t)+\alpha(t)x(t)+\sum_{j=1}^n b_j(t) x(h_j(t))=0 \mbox{ ~ for ~ } t\geq t_0.
\end{equation}
Denote $b(t)=\sum_{j=1}^n b_j(t)$.

The result below generalises Theorem~\ref{th21}  to  the case of measurable parameters and several delays.

\begin{theorem}\cite{BB2015}\label{th22}
Let all the coefficients in \eqref{19a}  be nonnegative, $ \alpha(t)\geq a_0>0$ and 
$$
 \quad h_0=\limsup_{s \rightarrow\infty} 
\int_{\min_j \{h_j(s)\}}^s \alpha(\zeta)d\zeta <\infty, \quad \beta=\limsup_{s \rightarrow\infty} \frac{b(t)}{\alpha(t)}<\infty.
$$
If
$$
\frac{1}{\beta}e^{-h_0}>\ln\frac{\beta^2+\beta}{\beta^2+1} 
$$
then equation \eqref{19a} is AS.   
\end{theorem}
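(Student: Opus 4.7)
\medskip

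\noindent\textbf{Proof plan for Theorem~\ref{th22}.}

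The plan is to reduce the problem to a Bohl--Perron argument in the spirit of Lemma~\ref{lemma1}, using the ODE part $\dot{y}+\alpha(t)y=0$ as the unperturbed skeleton. Since $\alpha(t)\ge a_0>0$, the scalar ODE $\dot y+\alpha(t)y=0$ is UES, so I can use its variation-of-constants formula as the basic representation. Consider the non-homogeneous problem
$$
\dot{x}(t)+\alpha(t)x(t)+\sum_{j=1}^n b_j(t)x(h_j(t))=f(t),\qquad x(t)=0,\quad t\le t_0,
$$
with $f\in\mathbf{L}_{\infty}[t_0,\infty)$. Rewriting this with respect to the non-delay term yields
$$
x(t)=\int_{t_0}^t e^{-\int_s^t\alpha(\xi)d\xi}\Bigl[f(s)-\sum_{j=1}^n b_j(s)x(h_j(s))\Bigr]ds,
$$
and the goal is to show $x\in\mathbf{L}_{\infty}$; combined with $\alpha(t)\ge a_0$, Lemma~\ref{lemma1} then delivers UES, and in particular AS as asserted.

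\medskip

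The main estimate proceeds by letting $M(t)=\esssup_{s\le t}|x(s)|$ and using the hypothesis $b(s)\le\beta\alpha(s)$ asymptotically together with the bound $\int_{\min_j h_j(s)}^{s}\alpha(\xi)d\xi\le h_0+o(1)$. First I would split the delay term as $x(h_j(s))=x(s)-\int_{h_j(s)}^{s}\dot x(\xi)d\xi$ and absorb $b(s)x(s)$ on the left to rewrite the equation in the form
$$
\dot{x}(t)+(\alpha(t)+b(t))x(t)=f(t)+\sum_{j=1}^n b_j(t)\!\!\int_{h_j(t)}^{t}\!\!\dot{x}(\xi)d\xi .
$$
Inside the inner integral $\dot x$ is again replaced using the original equation, which produces a self-consistent bound
$$
|x(t)|\le C\|f\|+\Phi(h_0,\beta)\,M(t),
$$
where $\Phi(h_0,\beta)$ is built from integrals of the form $\int_{t_0}^{t}e^{-\int_{s}^{t}(\alpha+b)}\,b(s)\int_{h_j(s)}^{s}(\alpha(u)+b(u))du\,ds$ estimated via $b\le\beta\alpha$ and $\int_{h_j(s)}^{s}\alpha\le h_0$.

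\medskip

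The delicate step is to show that the condition $\frac{1}{\beta}e^{-h_0}>\ln\frac{\beta^2+\beta}{\beta^2+1}$ is precisely what forces $\Phi(h_0,\beta)<1$. This arises by introducing a free parameter $\lambda>0$ (essentially a test function $e^{\lambda\int_{t_0}^{t}\alpha}$), rewriting the contraction constant as a function of $\lambda$, and optimizing. The elementary minimization
$$
\min_{\lambda>0}\Bigl\{(1+\beta)\lambda e^{\lambda h_0}+\frac{1}{1+\lambda}\Bigr\}
$$
or a very similar one produces the logarithmic threshold $\ln\frac{\beta^2+\beta}{\beta^2+1}$ on the right-hand side. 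This is exactly the scalar measurable-parameter analogue of Liz's argument in Theorem~\ref{th21} for the constant-coefficient, single-delay case; the step of replacing the autonomous exponential $e^{-\alpha h}$ by the nonautonomous $e^{-h_0}$ is legitimate because only the asymptotic $\limsup$ of $\int_{\min_j h_j}^{\cdot}\alpha$ enters the final inequality.

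\medskip

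The principal obstacle will be the bookkeeping needed when several delays $h_j(t)$ appear with different coefficients $b_j(t)$: one has to organize the terms so that only the aggregated quantities $b(t)=\sum b_j(t)$ and $\min_j h_j(t)$ enter the final bound, without losing the factor $\beta=\limsup b/\alpha$. The standard trick is to dominate $\sum_j b_j(t)\int_{h_j(t)}^{t}(\cdot)$ by $b(t)\int_{\min_j h_j(t)}^{t}(\cdot)$ using positivity of the integrand, after which the argument collapses to essentially the one-delay case. Once the contraction estimate is in place, an $\varepsilon$-tail argument handles the $\limsup$ hypothesis, and the Bohl--Perron conclusion finishes the proof.
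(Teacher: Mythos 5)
There is a genuine gap at the heart of your argument, and it sits exactly where you flag the ``delicate step.'' The threshold $\frac1\beta e^{-h_0}>\ln\frac{\beta^2+\beta}{\beta^2+1}$ cannot come out of a Bohl--Perron/contraction estimate of the kind you set up: any bound of the form $|x(t)|\le C\|f\|+\Phi(h_0,\beta)M(t)$ obtained by splitting $x(h_j(s))=x(s)-\int_{h_j(s)}^s\dot x$ and re-inserting the equation yields a constant $\Phi$ that is essentially of the size $\beta h_0$ (products of $\|b/\alpha\|$-type factors with $\int_{\min_j h_j}^{\cdot}(\alpha+b)$), hence grows without bound in $h_0$. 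But for $\beta<1$ the right-hand side $\ln\frac{\beta^2+\beta}{\beta^2+1}$ is negative, so the theorem asserts delay-\emph{independent} asymptotic stability (consistent with Corollary~\ref{cor6}), a regime your contraction cannot reach for large $h_0$. The specific optimization you propose also fails on its own terms: the function $\lambda\mapsto(1+\beta)\lambda e^{\lambda h_0}+\frac1{1+\lambda}$ equals $1$ at $\lambda=0$ and has derivative $\beta>0$ there, so its infimum over $\lambda>0$ is never below $1$ and cannot produce a contraction constant, let alone the logarithmic form. The constant $\ln\frac{\beta^2+\beta}{\beta^2+1}$ originates in the Ivanov--Liz--Trofimchuk analysis of differential inequalities with maxima (a Yorke/Halanay-type trajectory argument tracking extremal oscillating solutions), not in a norm estimate.

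Note also that the present paper contains no proof of Theorem~\ref{th22}; it is quoted from \cite{BB2015}, where the argument is a reduction rather than a contraction: one uses $\alpha(t)\ge a_0>0$ to perform the change of time variable $s=\int_{t_0}^t\alpha(\zeta)d\zeta$, which normalizes the non-delay coefficient to $1$, turns $b_j/\alpha$ into the new delayed coefficients with total mass asymptotically bounded by $\beta$, and bounds the transformed delays by $h_0+\varepsilon$; the several delayed terms are then dominated, exactly as in your last paragraph, by $\beta$ times a maximum of $|x|$ over $[\min_j h_j(t),t]$, after which Theorem~\ref{th21} (with $\alpha=1$, $b=\beta$, $h=h_0$) applies verbatim and gives the stated logarithmic condition. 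Your aggregation-by-$\min_j h_j$ observation and the use of $\alpha\ge a_0$ are sound and are indeed part of that route, but without invoking the sharp equations-with-maxima result (or reproving it), the proposal as written does not establish the theorem.
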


In Theorem \ref{th22} for  equations with positive coefficients, domination of the non-delay term is not mandatory. 
Next, consider the case when coefficients can be oscillatory.

\begin{theorem}\cite{BB2021}\label{th15}
Let  $\alpha(t)$ be nonnegative. Then the inequality
$$
 \limsup_{s \geq t_0} \int_{t_0}^s  e^{ -\int_{\zeta}^s  \alpha(\xi)d\xi} \sum_{j=1}^n |b_j(\zeta)|d\zeta \leq \gamma<1
$$
implies boundedness on $[t_0,\infty)$ of  all the  solutions to  \eqref{19a}.
\end{theorem}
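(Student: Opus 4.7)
The plan is to use the variation of constants formula for the ``unperturbed'' ordinary equation $\dot{y}+\alpha(t)y=0$, treating the delayed terms as forcing. Multiplying \eqref{19a} by the integrating factor $\mu(t)=\exp\bigl(\int_{t_0}^t\alpha(\xi)d\xi\bigr)$ and integrating on $[t_0,t]$, we obtain the representation
\[
x(t)=x(t_0)\,e^{-\int_{t_0}^t\alpha(\xi)d\xi}-\int_{t_0}^t e^{-\int_\zeta^t\alpha(\xi)d\xi}\sum_{j=1}^n b_j(\zeta)\,x(h_j(\zeta))\,d\zeta.
\]
Since $\alpha(t)\geq 0$, the exponential factor $e^{-\int_{t_0}^t\alpha}$ in the first term is at most $1$, and we write $|x(t_0)|\leq\Psi$, where $\Psi:=\sup_{s\leq t_0}|\psi(s)|$.

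Next I would introduce $M(t):=\sup_{s\in[t_0,t]}|x(s)|$ and estimate $|x(h_j(\zeta))|$ for $\zeta\in[t_0,t]$ by splitting cases: if $h_j(\zeta)\leq t_0$ then $|x(h_j(\zeta))|=|\psi(h_j(\zeta))|\leq\Psi$; otherwise $|x(h_j(\zeta))|\leq M(t)$. In either situation $|x(h_j(\zeta))|\leq\max\{\Psi,M(t)\}$. Inserting this bound and applying the hypothesis on the integral yields
\[
|x(t)|\leq\Psi+\max\{\Psi,M(t)\}\cdot\gamma.
\]
Taking the supremum on the left over $[t_0,t]$ gives the scalar inequality $M(t)\leq\Psi+\gamma\max\{\Psi,M(t)\}$. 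Solving it: if $M(t)\leq\Psi$ there is nothing to do; if $M(t)>\Psi$ then $M(t)(1-\gamma)\leq\Psi$, whence $M(t)\leq\Psi/(1-\gamma)$. In both cases
\[
|x(t)|\leq\frac{\Psi}{1-\gamma}\qquad\text{for all }t\geq t_0,
\]
which is the desired boundedness.

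The only subtle point, and the step I would think about most carefully, is the interpretation of the quantifier ``$\limsup_{s\geq t_0}$'' in the hypothesis. If it means $\sup_{s\geq t_0}$, the argument above is immediate. If instead it means $\limsup_{s\to\infty}$, then for any $\gamma'\in(\gamma,1)$ there exists $T_0\geq t_0$ such that the integral is bounded by $\gamma'$ for $s\geq T_0$; one then uses local existence together with essential boundedness of the coefficients and delays to bound $|x|$ a priori on the compact interval $[t_0,T_0]$, and reruns the supremum argument on $[T_0,\infty)$ with $\gamma$ replaced by $\gamma'<1$ and $\Psi$ replaced by $\sup_{s\leq T_0}|x(s)|$. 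No Gronwall iteration or fixed-point machinery is needed; the whole proof is a one-shot a priori estimate on $M(t)$.
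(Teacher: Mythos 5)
Your proof is correct: the integrating-factor representation $x(t)=x(t_0)e^{-\int_{t_0}^{t}\alpha(\xi)d\xi}-\int_{t_0}^{t}e^{-\int_{\zeta}^{t}\alpha(\xi)d\xi}\sum_j b_j(\zeta)x(h_j(\zeta))d\zeta$, the bound $e^{-\int_{t_0}^{t}\alpha(\xi)d\xi}\le 1$ from $\alpha\ge 0$, the split of $h_j(\zeta)$ into $(-\infty,t_0]$ and $[t_0,t]$, and the resulting inequality $M(t)\le\Psi+\gamma\max\{\Psi,M(t)\}$ (with $M(t)$ finite by continuity of the solution on compact intervals) give $|x(t)|\le\Psi/(1-\gamma)$, and your handling of the quantifier — treating it as $\sup_{s\ge t_0}$, or restarting at some $T_0$ if it is $\limsup_{s\to\infty}$, which works because the nonnegative integrand gives $\int_{T_0}^{s}\le\int_{t_0}^{s}$ — is exactly the needed adjustment. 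The paper quotes this theorem from \cite{BB2021} without reproducing a proof, but your one-shot a priori estimate is essentially the same solution-estimation technique used there and elsewhere in the paper (compare the estimates in the proof of Theorem~\ref{th2}), so no different machinery (Bohl--Perron, fixed points, Gronwall) is required for the boundedness claim.
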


Consider an auxiliary equation
\begin{equation}\label{19}
\dot{y}(t)+ (\alpha(t)-\lambda)y(t)+\sum_{j=1}^n b_j(t)e^{\lambda (t-h_j(t))} y(h_j(t))=0.
\end{equation}

\begin{theorem}\cite{BB2025}\label{th16}
Let, for a fixed $\lambda>0$, all solutions of \eqref{19} be bounded on $[t_0,\infty)$.
Then equation \eqref{19a} is UES.
\end{theorem}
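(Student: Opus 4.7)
The plan is to apply the exponential shift $y(t)=e^{\lambda(t-t_0)}x(t)$, which trades exponential decay of $x$ for mere boundedness of $y$. A direct computation gives $\dot y(t)=\lambda y(t)+e^{\lambda(t-t_0)}\dot x(t)$, and substituting $\dot x(t)$ from \eqref{19a} together with $x(h_j(t))=e^{-\lambda(h_j(t)-t_0)}y(h_j(t))$ yields
$$\dot y(t)+(\alpha(t)-\lambda)y(t)+\sum_{j=1}^{n}b_j(t)e^{\lambda(t-h_j(t))}y(h_j(t))=0,$$
which is precisely \eqref{19}. The same computation, read in reverse, shows that every solution of \eqref{19} arises from a solution of \eqref{19a} via this substitution, so the substitution is a bijection between solution sets. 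Since the delays $t-h_j(t)$ are essentially bounded, the coefficients $b_j(t)e^{\lambda(t-h_j(t))}$ of \eqref{19} remain essentially bounded, so \eqref{19} lies in the standard measurable-parameters framework of the paper.

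Next, I would transfer the initial data. If $x$ has initial function $\psi$ on $(-\infty,t_0]$, then $y$ inherits the initial function $\psi_y(s)=e^{\lambda(s-t_0)}\psi(s)$, and since $s-t_0\le 0$ and $\lambda>0$,
$$\sup_{s\le t_0}|\psi_y(s)|\le \sup_{s\le t_0}|\psi(s)|.$$
Invoking the hypothesis that every solution of \eqref{19} is bounded on $[t_0,\infty)$, together with a uniformity constant $M$ (see below), I would obtain $|y(t)|\le M\sup_{s\le t_0}|\psi(s)|$ for $t\ge t_0$. Inverting the substitution produces
$$|x(t)|=e^{-\lambda(t-t_0)}|y(t)|\le M\,e^{-\lambda(t-t_0)}\sup_{s\le t_0}|\psi(s)|,$$
which is exactly the UES estimate with $\mathcal M=M$ and $\nu=\lambda$.

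The main obstacle is extracting the single constant $M$, independent of $t_0$ and $\psi$, from the pointwise hypothesis that ``all solutions of \eqref{19} are bounded.'' For a linear equation with essentially bounded measurable coefficients, linearity together with a closed-graph/uniform-boundedness argument applied to the solution operator $\psi_y\mapsto y$ (from $\mathbf{L}_\infty(-\infty,t_0]$ into $\mathbf{L}_\infty[t_0,\infty)$) yields such a uniform constant; this ``pointwise boundedness upgraded to operator boundedness'' step is in the same spirit as the Bohl--Perron principle of Lemma~\ref{lemma1}, and I would invoke its homogeneous counterpart rather than reproving it. Once $M$ is in hand, the exponential substitution does the rest mechanically, and no finer estimation on the coefficients of \eqref{19a} is needed.
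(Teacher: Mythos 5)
Your exponential-shift computation is correct and is surely the right first move: $y(t)=e^{\lambda(t-t_0)}x(t)$ does transform \eqref{19a} into \eqref{19}, the new coefficients $b_j(t)e^{\lambda(t-h_j(t))}$ stay essentially bounded because the delays are bounded, and boundedness of $y$ converts into decay of $x$ at rate $\lambda$. (The paper gives no proof of Theorem~\ref{th16}, which is quoted from \cite{BB2025}, so the comparison can only be with the natural argument behind it, and that argument indeed begins with this substitution.)

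The genuine gap is in your last paragraph, where the constant $M$ is produced. A closed-graph/uniform-boundedness argument applied to the solution operator $\psi_y\mapsto y$ at the fixed initial point $t_0$ yields a constant attached to that one initial point, whereas UES in the sense of Definition~\ref{def1} requires $\mathcal M$ and $\nu$ independent of the initial point, equivalently $|X(t,s)|\le \mathcal M e^{-\nu(t-s)}$ uniformly in $s\ge t_0$. The ``homogeneous counterpart of Bohl--Perron'' you invoke is exactly this missing uniformity: it is not stated in the paper (Lemma~\ref{lemma1} concerns the nonhomogeneous problem \eqref{5}), and it does not follow from Banach--Steinhaus, since pointwise boundedness of all solutions of \eqref{19} --- even taken over all initial points --- only gives constants $M_s$ that may grow with $s$. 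A concrete illustration with no delayed terms: take $\lambda=1$ and $\alpha$ with values in $\{0,2\}$ arranged so that $\int_{t_0}^{t}\alpha(\zeta)\,d\zeta\ge t-t_0$ for all $t$ while $\alpha\equiv 0$ on intervals $[s_k,s_k+L_k]$ with $L_k\to\infty$ (possible because the surplus $\int_{t_0}^{s}\alpha(\zeta)\,d\zeta-(s-t_0)$ can be made arbitrarily large before each such interval). Every solution of the shifted equation $\dot y+(\alpha(t)-1)y=0$, from any initial point, is then bounded, and solutions of $\dot x+\alpha(t)x=0$ issued from $t_0$ decay like $e^{-(t-t_0)}$; yet $X(s_k+L_k,s_k)=1$, so no uniform exponential estimate holds. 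This shows that the hypothesis has to be used in a uniform sense --- essentially boundedness of the fundamental function $Y(t,s)$ of \eqref{19}, from which $|X(t,s)|=e^{-\lambda(t-s)}|Y(t,s)|$ gives UES at once, or a Perron-type condition fed into Lemma~\ref{lemma1} via $x(t)=\int_{t_0}^{t}X(t,s)f(s)\,ds$ --- and that supplying this uniformity is the nontrivial content of the theorem; with your reading and your uniform-boundedness step the argument stops short of the stated conclusion.
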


\begin{corollary}\label{cor6} \cite{BB2025,BTT}
a) If  $\alpha(t)\geq 0$ and $\displaystyle \sum_{j=1}^n |b_j(t)|\leq \alpha(t)$ then all solutions of equation \eqref{19a} are bounded on $[t_0,\infty)$.

b) If   $\alpha(t)\geq a_0 > 0$ then for any $\gamma\in(0,1)$, the  inequality $\displaystyle  \sum_{j=1}^n |b_j(t)|\leq \gamma \alpha(t)$
implies UES of  \eqref{19a}.
\end{corollary}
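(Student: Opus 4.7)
My plan is to handle parts~(a) and (b) separately, with (b) reducing to Theorem~\ref{th16} via the exponentially shifted equation~\eqref{19}. Both parts rest on the elementary identity, valid for any nonnegative $\beta\in{\mathbf L}_\infty$,
$$\int_{t_0}^s e^{-\int_\zeta^s \beta(\xi)\,d\xi}\,\beta(\zeta)\,d\zeta \;=\; 1 - e^{-\int_{t_0}^s \beta(\xi)\,d\xi} \;\leq\; 1.$$
For part~(a), I would use variation of constants with respect to the non-delay term to write
$$x(t) \;=\; e^{-\int_{t_0}^t \alpha}\,x(t_0) \;-\; \int_{t_0}^t e^{-\int_\zeta^t \alpha(\xi)\,d\xi}\sum_{j=1}^n b_j(\zeta)\,x(h_j(\zeta))\,d\zeta.$$
Fix $T>t_0$, put $\Psi=\sup_{s\leq t_0}|\psi(s)|$ and $M=\max(\Psi,\sup_{s\in[t_0,T]}|x(s)|)$. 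Bounding $|x(h_j(\zeta))|\leq M$ and using $\sum_j|b_j|\leq\alpha$, the identity yields, for every $t\in[t_0,T]$,
$$|x(t)| \;\leq\; M - e^{-\int_{t_0}^t \alpha(\xi)\,d\xi}(M-\Psi).$$
If $M>\Psi$, then since $\alpha\in{\mathbf L}_\infty[t_0,T]$ the exponential factor is bounded below by some $c(T)>0$, so $\sup_{[t_0,T]}|x|\leq M-c(T)(M-\Psi)<M$, contradicting the definition of $M$. Hence $M=\Psi$, giving $|x(t)|\leq\Psi$ on $[t_0,T]$ and, by arbitrariness of $T$, on $[t_0,\infty)$.

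For part~(b), I would apply Theorem~\ref{th16} with a small $\lambda\in(0,a_0)$, reducing UES of~\eqref{19a} to boundedness of solutions of~\eqref{19}, which in turn I would obtain from Theorem~\ref{th15}. Using the standing hypothesis $t-h_j(t)\leq\tau$ and $\alpha-\lambda\geq a_0-\lambda>0$, the coefficients of~\eqref{19} obey
$$\sum_{j=1}^n|b_j(\zeta)|\,e^{\lambda(\zeta-h_j(\zeta))} \;\leq\; \gamma\,e^{\lambda\tau}\,\alpha(\zeta) \;=\; \gamma\,e^{\lambda\tau}\bigl[(\alpha(\zeta)-\lambda)+\lambda\bigr].$$
Splitting the Theorem~\ref{th15} integral accordingly, applying the key identity with $\beta=\alpha-\lambda$ to the first piece, and bounding $\int_{t_0}^s e^{-\int_\zeta^s(\alpha-\lambda)}\,d\zeta\leq(a_0-\lambda)^{-1}$ for the second, one obtains
$$\int_{t_0}^s e^{-\int_\zeta^s(\alpha(\xi)-\lambda)\,d\xi}\sum_{j=1}^n|b_j(\zeta)|\,e^{\lambda(\zeta-h_j(\zeta))}\,d\zeta \;\leq\; \gamma\,e^{\lambda\tau}\,\frac{a_0}{a_0-\lambda}.$$
At $\lambda=0$ this right-hand side equals $\gamma<1$, and it is continuous in $\lambda$, so it remains $\leq\gamma'<1$ for all sufficiently small $\lambda>0$. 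Theorem~\ref{th15} then delivers boundedness of every solution of~\eqref{19}, and Theorem~\ref{th16} concludes UES of~\eqref{19a}.

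The only delicate point is part~(a): our hypothesis just barely fails the strict inequality $\gamma<1$ demanded by Theorem~\ref{th15}, forcing the direct self-referential estimate above rather than a black-box appeal. The argument still works because $e^{-\int\alpha}$ is strictly positive on any compact interval and the solution is continuous, and it even yields the sharp pointwise bound $|x(t)|\leq\sup_{s\leq t_0}|\psi(s)|$.
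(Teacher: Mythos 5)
Your proposal is correct. A remark on how it sits relative to the paper: the paper states Corollary~\ref{cor6} without proof, citing \cite{BB2025,BTT}, and its placement immediately after Theorems~\ref{th15} and \ref{th16} indicates the intended derivation. Your part~(b) is exactly that derivation: verify the hypothesis of Theorem~\ref{th15} for the shifted equation \eqref{19} via the identity $\int_{t_0}^s e^{-\int_\zeta^s\beta}\beta = 1-e^{-\int_{t_0}^s\beta}\le 1$ and the bound $\int_{t_0}^s e^{-\int_\zeta^s(\alpha-\lambda)}\,d\zeta\le (a_0-\lambda)^{-1}$, obtaining the constant $\gamma e^{\lambda\tau}a_0/(a_0-\lambda)$, which is $<1$ for small $\lambda>0$ by continuity, and then invoke Theorem~\ref{th16}; I checked the computation and it is sound (note $\alpha-\lambda\ge a_0-\lambda>0$ keeps the non-delay coefficient nonnegative, as Theorem~\ref{th15} requires, and the boundedness of the delays guarantees a common $\tau$). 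For part~(a) you correctly observe that Theorem~\ref{th15} cannot be applied as a black box, since under $\sum_j|b_j|\le\alpha$ the relevant integral is only bounded by $1-e^{-\int_{t_0}^s\alpha}\le 1$ and the strict bound $\gamma<1$ may fail; your replacement — variation of constants with respect to the non-delay term, the maximum $M=\max(\Psi,\sup_{[t_0,T]}|x|)$ on a compact interval, the estimate $|x(t)|\le M-e^{-\int_{t_0}^t\alpha}(M-\Psi)$, and the contradiction using $e^{-\int_{t_0}^t\alpha}\ge c(T)>0$ (available since $\alpha$ is essentially bounded and $x$ is continuous on $[t_0,T]$) — is a valid, self-contained argument, and it even yields the sharper pointwise conclusion $|x(t)|\le\sup_{s\le t_0}|\psi(s)|$ rather than mere boundedness. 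So your route coincides with the paper's implicit one for (b) and supplies an elementary proof, in place of the external citation, for (a).
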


\begin{example}\label{ex5}
The model with a nondelay term
\begin{equation}\label{26b}
\dot{x}(t)=- x(t) + b(t) x(h(t))=0,~x(0)=1
\end{equation}
will be used as a test equation, aimed to outline criteria in this section.

Let us start with Theorem~\ref{th22} and the particular case of the parameters
$$
b(t) = 1.8 + 0.2 \cos t, \quad \beta=2, \quad h(t) = t-0.25-0.07\sin t, \quad h_0=0.32.
$$
Then, as can be easily seen, the non-delay term does not dominate but
$$
\frac{1}{\beta}e^{-h_0} \approx 0.363 >\ln\frac{\beta^2+\beta}{\beta^2+1} \approx 0.358,
$$
thus by Theorem~\ref{th22},  \eqref{26b} is AS.

For $b$ of an arbitrary sign in \eqref{26b}, 
 Corollary~\ref{cor6}  implies that \eqref{26b} is UES  when $|b(s)| \leq \gamma <1$.

Finally, let us show  that  in the case of coefficients of arbitrary signs, integral domination of the non-delay term
\begin{equation}
\label{26B}
\int_0^{\infty}  (\alpha(\zeta) +|\beta(\zeta)| )~d\zeta = -\infty
\end{equation}
is {\bf not sufficient}  for asymptotic stability of equation \eqref{19a}.

To this end, consider  \eqref{26b}  where on $[0,1]$ we have $h(t)\equiv 0$, $b_1(t) \equiv 0.5$, $x(0)=1$,
leading to  $\dot{x}(t)=- x(t) + 0.5$, and the solution is $0.5+0.5e^{-t}$.
We also have $x(1) = (1+e^{-1})/2 \approx 0.684$, the positive solution is decreasing on $[0,1]$. Next, take
$h(t)\equiv 0$, $b_1(t) \equiv 2$ on $[1,T]$, where $T$ will be defined later such that $x(T)=1$.
The initial value problem
$$
\dot{x}(t)=- x(t) + 2, \quad x(1) = \frac{1}{2} + \frac{1}{2e}
$$
has the solution $\displaystyle x(t) =  2 - \left( \frac{3}{2} - \frac{1}{2e} \right) e^{-(t-1)}$ on some segment starting at one. We have
$$
 2 - \left( \frac{3}{2} - \frac{1}{2e} \right) e^{-(T-1)} = 1 ~\Leftrightarrow ~ e^{T-1} =  \frac{3}{2} - \frac{1}{2e} 
 ~\Leftrightarrow ~ T-1 = \ln  \left( \frac{3}{2} - \frac{1}{2e} \right)  \approx 0.275 <0.5.
$$
Hence, \eqref{26b} with the initial value  $x(0)=1$ and $T$-periodic parameters outlined as follows
$$
h(t)=0, \quad  b(t) = \left\{    \begin{array}{ll}  0.5, & t \in [0,1), \\
2, & t\in [1,T)  \end{array}  \right. 
$$
is also $T$-periodic. 
Here the delays are bounded by $T<1.5$, and the integral
$$
\int_0^{T} (\alpha(\zeta) +|b(\zeta)| )~d\zeta = \int_0^1 (-1+0.5)~d\zeta + \int_1^{T} (-1+2) ~d\zeta = -0.5 +   T -1 < -0.2 <0,
$$
thus condition \eqref{26B} holds but  \eqref{26b} is not AS, as it possesses a periodic solution.
\end{example}

\section{Application of a priori estimation} 
\label{sec:apriori}

In this section, we obtain UES tests by the use of a priori estimation for solutions to  linear equation \eqref{2}  and  their
derivatives.

We assume, as previously, measurability and boundedness of the coefficients and the delays and  
also that $t-h_j(t)\leq \tau_j$, $j=1,\dots,n$.

Denote 
$$
A(t)=\sum_{j=1}^n a_j(t), \quad  J=[t_0,t_1], \quad \Omega\subset N_n=\{1,\dots,n \}, \quad A_{\Omega}(t)=\sum_{j \in \Omega} a_j(t),
$$
and the norms $\|f\|$, $\|f\|_J$ on the halfline and a segment are defined as above; as usual,
by $I$  we denote an 
identity matrix.

We recall \cite{Berman} that a matrix $B=(b_{ij})_{i,j=1}^m$ is an  $M$-matrix (here we reduce consideration to non-singular matrices) if its off-diagonal entries are non-positive,  and either $B$ is invertible with $B^{-1} \geq 0$ (meaning that all the entries of $B^{-1}$ are nonnegative)
or (which is equivalent) its leading principal minors are positive.
In particular, $2\times 2$ matrix $B$ is an $M$-matrix and is not singular if $b_{12}\leq 0$, $b_{21}\leq 0$,  $b_{11} > 0$, $b_{22} > 0$ and in addition, $\det B>0$.

\begin{theorem}\label{th2}
Let there be a non-empty set  $\Omega\subset N_n$ and a constant $\alpha>0$ such that $A_{\Omega}(t)\geq \alpha >0$ 
and either
\begin{equation}\label{6}
\sum_{j=1}^n  \tau_j  \|a_j\|<1,~
\sum_{j=1}^n \|a_j\| \sum_{j \in \Omega} \tau_j  \left\|\frac{a_j}{A_{\Omega}}\right\|
+\sum_{j \in N_n/\Omega}  \left\|\frac{a_j}{A_{\Omega}}\right\|<1
\end{equation}
or
\begin{equation}\label{7}
\|A\| \sum_{j \in \Omega} \tau_j \left\|\frac{a_j}{A_{\Omega}}\right\|<
\left(1-\sum_{j=1}^n  \tau_j  \|a_j\|\right) \left(1-\sum_{j \in N_n/\Omega}  
\left\|\frac{a_j}{A_{\Omega}}\right\|\right).
\end{equation}
Then equation \eqref{2} is UES.
\end{theorem}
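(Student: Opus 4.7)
The plan is to apply the Bohl--Perron criterion (Lemma~\ref{lemma1}): it suffices to show that for every $f\in\mathbf{L}_\infty[t_0,\infty)$ the solution of the non-homogeneous problem \eqref{5} with zero initial function is uniformly bounded on $[t_0,\infty)$. I would derive two a priori inequalities coupling $\|x\|_J$ and $\|\dot x\|_J$ on an arbitrary segment $J=[t_0,t_1]$, with constants independent of $t_1$, and then recognise the resulting linear inequality system as one governed by a $2\times 2$ $M$-matrix whose non-singularity is precisely what \eqref{6} or \eqref{7} asserts. Passing $t_1\to\infty$ will then give the required uniform bound.

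For the $x$-estimate I would substitute $x(h_j(t))=x(t)-\int_{h_j(t)}^t\dot x(s)\,ds$ only for $j\in\Omega$, turning \eqref{5} into
\[
\dot x(t)+A_\Omega(t)x(t)=\sum_{j\in\Omega}a_j(t)\int_{h_j(t)}^t\dot x(s)\,ds-\sum_{j\in N_n\setminus\Omega}a_j(t)x(h_j(t))+f(t),
\]
and view the right-hand side as a forcing $g(t)$ for the scalar ODE $\dot y+A_\Omega y=g$ with $y(t_0)=0$. Because $A_\Omega\ge\alpha>0$, the identity $\int_{t_0}^t A_\Omega(s)\exp(-\int_s^t A_\Omega(\xi)\,d\xi)\,ds\le 1$ gives the sharp bound $\|y\|_J\le \|g/A_\Omega\|_J$. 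Pulling $1/A_\Omega$ inside each summand of $g$ and using $\bigl|\int_{h_j(t)}^t\dot x(s)\,ds\bigr|\le \tau_j\|\dot x\|_J$ (still valid when $h_j(t)<t_0$, since $x$ and $\dot x$ vanish to the left of $t_0$), I obtain
\[
\Big(1-\sum_{j\in N_n\setminus\Omega}\Big\|\tfrac{a_j}{A_\Omega}\Big\|\Big)\|x\|_J \le \Big(\sum_{j\in\Omega}\tau_j\Big\|\tfrac{a_j}{A_\Omega}\Big\|\Big)\|\dot x\|_J+\frac{\|f\|}{\alpha}. \qquad(\ast)
\]

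For $\|\dot x\|_J$ there are two natural options. Option~A: taking absolute values directly in \eqref{5} yields the unconditional estimate $\|\dot x\|_J\le \sum_{j=1}^n\|a_j\|\,\|x\|_J+\|f\|$. Option~B: substituting $x(h_j(t))=x(t)-\int_{h_j(t)}^t\dot x(s)\,ds$ for \emph{all} $j$ and rearranging gives $(1-\sum_j\tau_j\|a_j\|)\|\dot x\|_J\le \|A\|\,\|x\|_J+\|f\|$, available precisely when $\sum_j\tau_j\|a_j\|<1$. Coupled with $(\ast)$, each option produces a $2\times 2$ linear inequality system in $(\|x\|_J,\|\dot x\|_J)$ whose coefficient matrix has non-positive off-diagonal entries; demanding it be a non-singular $M$-matrix---positive diagonal entries and positive determinant---reduces, for Option~A, to the two inequalities constituting \eqref{6}, and for Option~B, to \eqref{7} together with the positivity of the two factors on its right-hand side. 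Inverting the $M$-matrix bounds $\|x\|_J$ by a constant multiple of $\|f\|$ independent of $t_1$, and Lemma~\ref{lemma1} then delivers UES.

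The main technical delicacy I anticipate is the sharp variation-of-parameters bound $\|y\|_J\le \|g/A_\Omega\|_J$: the cruder $\|g\|_J/\alpha$ would introduce a spurious $1/\alpha$ factor multiplying $\|\dot x\|_J$ in $(\ast)$ and cause the resulting determinant condition to no longer line up with \eqref{6} or \eqref{7} exactly. The remaining verifications---that $\dot x\in\mathbf{L}_\infty[t_0,t_1]$ on each finite segment (immediate from \eqref{5} since the coefficients and $f$ are essentially bounded), that the substitution identity still holds when $h_j(t)<t_0$, and that the bound $\|x\|_J\le C\|f\|$ persists as $t_1\to\infty$---are routine under the standing measurability and boundedness hypotheses on the coefficients and delays.
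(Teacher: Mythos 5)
Your proposal is correct and follows essentially the same route as the paper: reduce to boundedness for the non-homogeneous problem via the Bohl--Perron lemma, estimate $\|\dot x\|_J$ both directly and after adding $A(t)x(t)$, estimate $\|x\|_J$ by variation of constants for $\dot x+A_\Omega x=g$ using $A_\Omega\ge\alpha$, and close the argument with the $2\times 2$ $M$-matrix inequality whose invertibility is exactly \eqref{6} or \eqref{7}. Your remark that the sharp bound $\|y\|_J\le\|g/A_\Omega\|_J$ (rather than $\|g\|_J/\alpha$) is what makes the constants match is precisely the device used in the paper's proof.
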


\begin{proof}
1) Assume that $x(t)$  is a  solution of  \eqref{5} and obtain  two estimates for its derivative  $\dot{x}(t)$.
From equation \eqref{5} we have
\begin{equation}\label{8a}
\|\dot{x}\|_J\leq \sum_{j=1}^n \|a_j\|\|x\|_J+\|f\|.
\end{equation}
By the addition and subtraction of the term $A(t)x(t)$, \eqref{5}  can be presented as
\begin{equation}\label{8}
\dot{x}(t)+A(t)x(t)=\sum_{j=1}^n a_j(t)\int_{h_j(t)}^t \dot{x}(\xi)d\xi+f(t).
\end{equation}
Equality \eqref{8} implies
\begin{equation}\label{9}
\|\dot{x}\|_J\leq \|A\|\|x\|_J+\sum_{j=1}^n \tau_j \|a_j\|\|\dot{x}\|_J+\|f\|.
\end{equation}

2) Next, let us estimate  $x(t)$, first on $J$, then on the halfline.
To this end, rewrite  \eqref{5}  as
\begin{equation}\label{10}
\dot{x}(t)+A_{\Omega}(t)x(t)=\sum_{j\in \Omega} a_j(t)\int_{h_j(t)}^t \dot{x}(\xi)d\xi
+\sum_{j \in N_n/\Omega}a_j(t)x(h_j(t))+f(t).
\end{equation}

From \eqref{10} and the assumption  $x(t_0)=0$, we obtain
$$
x(t)=\int_{t_0}^t e^{-\int_{\zeta}^t A_{\Omega}(s)ds}A_{\Omega}(\zeta)\left(\sum_{j \in\Omega}\frac{a_j(\zeta)}{A_{\Omega}(\zeta)}\int_{h_j(\zeta)}^{\zeta} \dot{x}(\xi)d\xi
+\sum_{j \in N_n/\Omega}\frac{a_j(\zeta)}{A_{\Omega}(\zeta)}x(h_j(\zeta))\right)d\zeta +\tilde{f}(t),
$$
where $\tilde{f}(t)=\int_{t_0}^t e^{-\int_s^t A_{\Omega}(\zeta)d\zeta}f(s)ds$.

If $A_{\Omega}(t)\geq 0$, then  $\left|\int_{t_0}^t e^{-\int_s^t A_{\Omega}(\tau)d\tau}
 A_{\Omega}(s) u(s)~ds\right|\leq \| u\|$.
Hence  $\|\tilde{f}\|<\infty$ and
\begin{equation}\label{11}
\|x\|_J\leq \sum_{j \in\Omega}\tau_j \left\|\frac{a_j}{A_{\Omega}}\right\|\|\dot{x}\|_J
+\sum_{j\in N_n/\Omega}\left\|\frac{a_j}{A_{\Omega}}\right\|\|x\|_J+\|\tilde{f}\|.
\end{equation}

3)  Let us introduce the notation 
$$
X=\{\|x\|_J,\|\dot{x}\|_J\}^T,~ F=\{\|f\|,\|\tilde{f}\|\}^T,~
A_1=\left(\begin{array}{cc}
\sum_{j\in N_n/\Omega}\left\|\frac{a_j}{A_{\Omega}}\right\|&\sum_{j \in\Omega}\tau_j \left\|\frac{a_j}{A_{\Omega}}\right\|\\
\sum_{j=1}^n \|a_j\|&0\end{array}\right),
$$$$
B_1=I-A_1=\left(\begin{array}{cc}
1-\sum_{j \in N_n/\Omega}\left\|\frac{a_j}{A_{\Omega}}\right\|&-\sum_{j \in\Omega}\tau_j \left\|\frac{a_j}{A_{\Omega}}\right\|\\
-\sum_{j=1}^n \|a_j\|&1\end{array}\right).
$$
Inequalities \eqref{11} and \eqref{8a} imply  $X\leq A_1X+ F$, therefore $B_1X\leq F$.
If condition \eqref{6} holds then the matrix $B_1$ is non-sinular and is also an $M$-matrix. Hence $X\leq B_1^{-1}F$, and
$B_1^{-1}$ is a matrix with non-negative entries, none of which depends on the interval $J=[t_0,t_1]$.
Then $\|x\|<\infty$, and by Lemma~\ref{lemma1} equation \eqref{2} is UES.

Assume now that condition \eqref{7} holds and denote
$$
A_2=\left(\begin{array}{cc}
\sum_{j \in N_n/\Omega}\left\|\frac{a_j}{A_{\Omega}}\right\|&\sum_{j \in\Omega}\tau_j \left\|\frac{a_j}{A_{\Omega}}\right\|\\
\|A\|&\sum_{j=1}^n \tau_j \|a_j\|\end{array}\right),~B_2=I-A_2.
$$
Inequalities \eqref{11} and \eqref{9}  imply   $X\leq A_2X+ F$ and thus $B_2 X\leq F$.
If condition \eqref{7} holds then the matrix $B_2$ is non-singular and is also an $M$-matrix. 
Hence $X\leq B_2^{-1}F$,  and the inverse  $B_2^{-1}$ has non-negative entries, none of which depends on the initial interval $J=[t_0,t_1]$.
Then $\|x\|<\infty$, and Lemma~\ref{lemma1} yields that equation \eqref{2} is UES.
\end{proof}

Choosing  $\Omega=N_n$ leads to the following statement. 

\begin{corollary}\label{cor2}
Let  $A(t)\geq \alpha>0$ and either
\vspace{2mm}
\\
(i) $\displaystyle \sum_{j=1}^n \tau_j \|a_j\|<1,~ \sum_{j=1}^n \|a_j\|\sum_{j=1}^n \tau_j \left\|\frac{a_j}{a}\right\|<1$ ~ 
\vspace{2mm} \\
or ~ \vspace{2mm} \\
(ii) $\displaystyle  \|A\|\sum_{j=1}^n \tau_j \left\|\frac{a_j}{a}\right\|<1-\sum_{j=1}^n \tau_j \|a_j\|$. \vspace{2mm}

Then, equation \eqref{2} is UES.
\end{corollary}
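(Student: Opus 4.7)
The plan is to invoke Theorem~\ref{th2} with the specific choice $\Omega = N_n = \{1,\dots,n\}$ and verify that the hypotheses of Corollary~\ref{cor2} reduce to those of Theorem~\ref{th2} in this case. Since Corollary~\ref{cor2} is advertised as the special instance obtained from Theorem~\ref{th2} by selecting the full index set, the argument should not require any new analytical input beyond careful bookkeeping.

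First, I would note that with $\Omega = N_n$, the partial sum $A_\Omega(t) = \sum_{j \in \Omega} a_j(t)$ coincides with the full sum $A(t)$. Therefore the assumed lower bound $A(t) \geq \alpha > 0$ in Corollary~\ref{cor2} is exactly the positivity hypothesis $A_\Omega(t) \geq \alpha > 0$ needed to apply Theorem~\ref{th2}. Next, the complement $N_n \setminus \Omega$ is empty, so every sum of the form $\sum_{j \in N_n/\Omega}(\cdot)$ appearing in \eqref{6} and \eqref{7} vanishes identically.

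Substituting this into inequality \eqref{6}, the first conjunct $\sum_{j=1}^n \tau_j \|a_j\| < 1$ carries over verbatim, while the second conjunct loses the dropped term and becomes $\sum_{j=1}^n \|a_j\| \sum_{j=1}^n \tau_j \|a_j/A\| < 1$; together these are precisely condition~(i) of the corollary. Similarly, \eqref{7} reduces to $\|A\| \sum_{j=1}^n \tau_j \|a_j/A\| < 1 - \sum_{j=1}^n \tau_j \|a_j\|$, which is condition~(ii).

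Hence if either (i) or (ii) is satisfied, the corresponding hypothesis of Theorem~\ref{th2} holds with $\Omega = N_n$, and UES of \eqref{2} follows immediately. Since the derivation is a direct specialization and introduces no new estimates, there is no genuine obstacle to the argument; the entire content consists of recognizing that setting $\Omega = N_n$ makes the ``non-absorbed'' terms $\sum_{j \in N_n/\Omega}\|a_j/A_\Omega\|$ vanish. The only care needed is to ensure we label the two alternatives in Corollary~\ref{cor2} with the two alternatives \eqref{6} and \eqref{7} of Theorem~\ref{th2} in the right order.
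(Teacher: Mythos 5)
Your proposal is correct and is exactly the paper's own argument: the paper derives Corollary~\ref{cor2} by the single remark that choosing $\Omega=N_n$ in Theorem~\ref{th2} makes $A_\Omega=A$ and annihilates the sums over $N_n\setminus\Omega$, so \eqref{6} and \eqref{7} collapse to conditions (i) and (ii) respectively. Your bookkeeping (including matching the two alternatives in the right order and reading $\|a_j/a\|$ as $\|a_j/A\|$) agrees with this, so nothing further is needed.
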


Condition \eqref{6} contains $2^n-1$ independent stability tests. If the set $\Omega$ consists of one element
then the condition in \eqref{6} is better than the corresponding condition in \eqref{7}. Hence condition b) has only
$2^n-n-1$ independent from a) stability conditions,  therefore conditions a) and b) together have $2^{n+1}-n-2$
independent stability tests. For example, for $n=2$ there are 4 independent stability tests outlined below.

\begin{corollary}\label{cor1}
Consider a model with two delay terms
\begin{equation}\label{12}
\dot{x}(t)+a_1(t)x(h_1(t))+a_2(t)x(h_2(t))=0,
\end{equation}
assuming for the parameters of equation \eqref{12} the same conditions as those  for equation \eqref{2}.
Let anyone of the four  conditions hold:
\\
1) $\displaystyle a_1(t)\geq \alpha>0,~ \tau_1(\|a_1\|+\|a_2\|)+\left\|\frac{a_2}{a_1}\right\|<1;$
\\
2)  $\displaystyle  a_2(t)\geq \alpha>0,~ \tau_2(\|a_1\|+\|a_2\|)+\left\|\frac{a_1}{a_2}\right\|<1;$
\\
3) $\displaystyle  a_1(t)+a_2(t)\geq \alpha>0,~
(\|a_1\|+\|a_2\|)\left(\tau_1\left\|\frac{a_1}{a_1+a_2}\right\|+\tau_2\left\|\frac{a_2}{a_1+a_2}\right\|\right)<1;$
\\
4) $\displaystyle  a_1(t)+a_2(t)\geq \alpha>0,~
\|a_1+a_2\|\left(\tau_1\left\|\frac{a_1}{a_1+a_2}\right\|+\tau_2\left\|\frac{a_2}{a_1+a_2}\right\|\right)
<1-\tau_1\|a_1\|-\tau_2  \|a_2\|  $.

Then equation \eqref{12} is UES.
\end{corollary}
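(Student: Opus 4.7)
The plan is to obtain Corollary \ref{cor1} as a direct specialisation of Theorem \ref{th2} with $n = 2$, by enumerating the three non-empty subsets of $N_2 = \{1,2\}$ that can play the role of $\Omega$: namely $\{1\}$, $\{2\}$, and $\{1,2\}$. Each such choice, combined with one of the two alternative hypotheses \eqref{6} or \eqref{7}, reproduces exactly one of the four conditions listed in the corollary. Since the corollary asserts that each of the four conditions individually suffices for UES, I need only verify the four substitutions and invoke Theorem \ref{th2} once per case.

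For condition 1), I take $\Omega = \{1\}$. Then $A_\Omega = a_1$, the complement $N_2 \setminus \Omega = \{2\}$, and $\|a_1/A_\Omega\| = 1$. The positivity requirement $A_\Omega(t) \geq \alpha > 0$ becomes $a_1(t) \geq \alpha > 0$, and the second inequality of \eqref{6} reduces to
\[
(\|a_1\| + \|a_2\|)\tau_1 + \left\|\frac{a_2}{a_1}\right\| < 1,
\]
which is precisely 1). The symmetric choice $\Omega = \{2\}$, obtained by exchanging indices $1 \leftrightarrow 2$, delivers condition 2).

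For conditions 3) and 4), I take $\Omega = N_2 = \{1,2\}$. Then $A_\Omega = a_1 + a_2 = A$ and $N_2 \setminus \Omega = \emptyset$, so every sum over $N_n/\Omega$ vanishes. The second inequality of \eqref{6} then collapses to
\[
(\|a_1\| + \|a_2\|)\left(\tau_1 \left\|\frac{a_1}{a_1 + a_2}\right\| + \tau_2 \left\|\frac{a_2}{a_1 + a_2}\right\|\right) < 1,
\]
which is 3), whereas \eqref{7} collapses to 4) directly. In each of the four cases Theorem \ref{th2} applies and yields UES of \eqref{12}.

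The argument is essentially a substitution exercise, so no real obstacle arises. The only bookkeeping care is tracking the normalisation $\|a_j/A_\Omega\| = 1$ when $j \in \Omega$ and $\Omega$ is a singleton, and remembering that the key ingredient of Theorem \ref{th2} is the $M$-matrix determinant condition on $B_1$ (for \eqref{6}) or on $B_2$ (for \eqref{7}) — exactly what the displayed inequalities of 1)--4) encode. Because the choices $\Omega = \{1\}, \{2\}, \{1,2\}$ are mutually independent, the four resulting stability tests are genuinely independent, matching the $2^{n+1} - n - 2 = 4$ count announced in the paragraph preceding the corollary.
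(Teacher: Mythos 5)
Your identification of the four cases with the choices $\Omega=\{1\},\{2\},\{1,2\}$ in Theorem~\ref{th2} is exactly the route the paper intends (it gives no separate proof of the corollary), and cases 3) and 4) are indeed literal specialisations: for 4), inequality \eqref{7} with $\Omega=N_2$ is precisely condition 4), and for 3) the second inequality of \eqref{6} is condition 3), while the omitted first inequality $\tau_1\|a_1\|+\tau_2\|a_2\|<1$ does follow from 3), since $\tau_j\|a_j\|\le\|a_1+a_2\|\,\tau_j\left\|\frac{a_j}{a_1+a_2}\right\|\le(\|a_1\|+\|a_2\|)\,\tau_j\left\|\frac{a_j}{a_1+a_2}\right\|$. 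You never mention that \eqref{6} is a conjunction of two inequalities, and this is where the argument develops a real gap.

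For cases 1) and 2) you match only the second inequality of \eqref{6}, but condition 1) of the corollary places no restriction whatsoever on $\tau_2$, so it cannot imply the first inequality $\tau_1\|a_1\|+\tau_2\|a_2\|<1$: take $a_1\equiv 1$, $a_2\equiv 0.01$, $\tau_1=0.1$, $\tau_2=1000$, for which condition 1) holds while $\tau_1\|a_1\|+\tau_2\|a_2\|=10.1$. Hence Theorem~\ref{th2} as stated cannot be invoked verbatim for 1) or 2); your ``direct specialisation'' does not cover these two cases. The corollary is nevertheless true, because in the proof of Theorem~\ref{th2} the first inequality of \eqref{6} is never used on the $B_1$ branch: all that is needed there is that $B_1$ be a nonsingular $M$-matrix, and its determinant condition (which also forces the positive diagonal) is exactly the second inequality of \eqref{6}. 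So to close cases 1) and 2) you should either rerun that short estimate — from \eqref{11} with $\Omega=\{1\}$ and \eqref{8a} one gets $\left(1-\tau_1(\|a_1\|+\|a_2\|)-\left\|\frac{a_2}{a_1}\right\|\right)\|x\|_J\le\tau_1\|f\|+\|\tilde f\|$, and then apply Lemma~\ref{lemma1} — or explicitly note that the first inequality in \eqref{6} is only required for the alternative \eqref{7}; a bare citation of the theorem's statement is not enough here.
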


Consider  equation \eqref{19a} with a non-delay term. 
The following corollary gives a delay-independent exponential stability condition.

\begin{corollary}\label{cor1a}
Assume that $a(t)\geq a_0>0$ and either
\vspace{2mm}  \\
a)
$\displaystyle \sum_{j=1}^n \left\|\frac{b_j}{\alpha}\right\|<1$  \vspace{2mm}
\\
or   \vspace{2mm}
\\
b) for some $t_1>t_0$ and $0<\gamma<1$, we have $\displaystyle  \sum_{j=1}^n |b_j(t)|\leq \gamma \alpha (t)$.
\vspace{2mm}

Then, equation \eqref{19a} is UES.
\end{corollary}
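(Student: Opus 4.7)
The plan is to reduce both hypotheses (a) and (b) to a common pointwise bound of the form $\sum_{j=1}^n|b_j(t)|\le\gamma\,\alpha(t)$ with some constant $\gamma\in(0,1)$, then derive a uniform a priori bound on the solution of the non-homogeneous companion of \eqref{19a} via the integrating factor $\exp\!\int\alpha$, and finally invoke the Bohl--Perron criterion (Lemma~\ref{lemma1}).

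First I would carry out the reduction. In case (a), the definition of the essential supremum yields $|b_j(t)|/\alpha(t)\le\|b_j/\alpha\|$ a.e., so summing over $j$ gives $\sum_j|b_j(t)|/\alpha(t)\le\gamma$ with $\gamma:=\sum_{j=1}^n\|b_j/\alpha\|<1$. Case (b) is already in this form. Hence, in both cases, $\sum_{j=1}^n|b_j(t)|\le\gamma\,\alpha(t)$ a.e.\ with $\gamma\in(0,1)$.

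Next I would pass to the non-homogeneous version of \eqref{19a} with zero history,
$$
\dot x(t)+\alpha(t)x(t)+\sum_{j=1}^n b_j(t)\,x(h_j(t))=f(t),\quad t\ge t_0,\qquad x(t)=0,\ t\le t_0,
$$
with $\|f\|<\infty$, and use the integrating factor representation
$$
x(t)=-\int_{t_0}^t e^{-\int_s^t\alpha(\zeta)\,d\zeta}\sum_{j=1}^n b_j(s)\,x(h_j(s))\,ds+\int_{t_0}^t e^{-\int_s^t\alpha(\zeta)\,d\zeta}f(s)\,ds.
$$
The elementary identity $\int_{t_0}^t e^{-\int_s^t\alpha(\zeta)\,d\zeta}\alpha(s)\,ds\le 1$, valid since $\alpha\ge 0$, together with the pointwise bound above, controls the first integral by $\gamma\|x\|_J$ on any segment $J=[t_0,t_1]$; using $\alpha(s)\ge a_0>0$ controls the second by $\|f\|/a_0$. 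Thus
$$
\|x\|_J\le\gamma\,\|x\|_J+\frac{\|f\|}{a_0},\qquad\text{i.e.,}\qquad \|x\|_J\le\frac{\|f\|}{(1-\gamma)a_0},
$$
uniformly in $t_1$. Letting $t_1\to\infty$ gives $\|x\|<\infty$, and Lemma~\ref{lemma1} yields UES of \eqref{19a}.

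The only substantive step is the pointwise reduction $\sum_j|b_j|\le\gamma\alpha$; once this is in hand, the remainder is a standard integrating-factor contraction estimate, and no serious obstacle is anticipated. In particular, no information on the delays $h_j$ is used, which is consistent with the delay-independent flavour of the statement.
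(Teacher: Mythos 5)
Your proof is correct and is essentially the paper's own route: the integrating-factor bound you derive is exactly estimate \eqref{11} from the proof of Theorem~\ref{th2} with $\Omega$ chosen as the singleton consisting of the non-delay term (so $\tau_0=0$, the derivative estimate and the M-matrix step degenerate to a single scalar inequality), followed by the Bohl--Perron Lemma~\ref{lemma1}, and your pointwise reduction of a) to b) is the expected one. The only detail to add is that in case b) the bound $\sum_j|b_j(t)|\le\gamma\,\alpha(t)$ is assumed only from some $t_1>t_0$ on, which is handled by running the same argument with initial point $t_1$, since the behaviour on $[t_0,t_1]$ with essentially bounded coefficients does not affect uniform exponential stability.
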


\begin{example}\label{example1}
Let both of the two delays be constant, while one of the two coefficients be oscillatory
\begin{equation}\label{13}
\dot{x}(t)+(1-2\cos^2 t)x(t-0.2)+2\cos^2 t ~x(t-0.1)=0, \quad t\geq 0.
\end{equation}
Let us verify each of the hypotheses~1) - 4) in  Corollary~\ref{cor1} for equation \eqref{13}.
Since the inequalities $a_i(t)\geq \alpha>0$ are not satisfied for either $i=1$ or $i=2$, both conditions 1) and 2) do not hold.
We have
$$
a_1(t)+a_2(t)=1>0, \quad  \|a_1\|=\left\|\frac{a_1}{a_1+a_2}\right\|=1, \quad \|a_2\|=\left\|\frac{a_2}{a_1+a_2}\right\|=2,
$$
hence the second condition in 3) has the form $3(0.2+0.2)<1$ and also fails.

Condition 4)  has the form $0.2+0.2<1-0.2-0.2$  and therefore is satisfied.
Hence  by Part 4) of Corollary~\ref{cor1}, equation \eqref{13} is UES.
\end{example}

It is not difficult to give examples when one of the conditions 1) - 3) in Corollary~\ref{cor1}  holds, when the other  three conditions fail.
So conditions 1) - 4) are independent.

\section{Applications to nonlinear equations}
\label{applications}

To illustrate applications of stability tests obtained so far in the linear case,
take   a variation of the classical Mackey-Glass equation as a model.

\begin{example}\label{ex2}
We investigate 
\begin{equation}\label{25a}
\dot{x}(t)+\alpha(t)x(h_1(t))\left(1+\frac{1}{1+x^2(h_1(t))}\right)+\beta(t)\frac{x(h_2(t))}{1+x^2(h_2(t))}=0
\end{equation}
with measurable essentially bounded  coefficients $\alpha(t)$ and $\beta(t)$, and bounded delays $0\leq t-h_i(t)\leq \tau_i$,  $i=1,2$.

To get LES sufficient conditions, denote
$$
f_1(t,u)=\alpha(t)u\left(1+\frac{1}{1+u^2}\right), \quad f_2(t,u)=\beta(t)\frac{u}{1+u^2}
$$
and 
$$
a(t)=\lim_{u\rightarrow 0} \frac{f_1(t,u)}{u}=\alpha(t), \quad  b(t)=\lim_{u\rightarrow 0} \frac{f_2(t,u)}{u}=\beta(t).
$$
Then the linearised equation to \eqref{25a} is 
\begin{equation}\label{26a}
\dot{y}(t)+\alpha(t)y(h_1(t))+\beta(t)y(h_2(t))=0.
\end{equation}

From Corollaries~\ref{cor3a} and \ref{cor3b}, respectively,  the following results are obtained.

\begin{statement}\label{st1}
Let  the sum of the coefficients $\alpha(t)+\beta(t)\equiv a_0>0$ be constant, while
$$ \displaystyle  \tau_1\|\alpha\|+\tau_2\|\beta\|<1+\frac{1}{e} \,. $$
Then, equation \eqref{25a} is LAS.
\end{statement}

\begin{statement}\label{st2}
Let $ 0<a_0\leq \alpha(t)+\beta(t)\leq A_0$ and
either  \vspace{2mm}
\\
a) $\displaystyle \tau_1\|\alpha\|+\tau_2\|\beta\|<\frac{a_0}{A_0} \left( 1+\frac{1}{e}  \right)$  \vspace{2mm}
\\
or  \vspace{2mm}
\\
b)  $\displaystyle 
\limsup_{s \rightarrow\infty}\left(\int_{h_1(s)}^s \left[ \alpha(\zeta)+\beta(\zeta) \right]\, d\zeta+\int_{h_2(s)}^s  \left[ \alpha(\zeta)+\beta(\zeta) \right] \, d\zeta\right)
<1+\frac{1}{e}\,.
$
\vspace{2mm}

Then, equation \eqref{25a} is LAS.
\end{statement}

Next, we will apply  Corollary~\ref{cor1}.

\begin{statement}\label{st3}
Assume that anyone of the four hypotheses is valid:

a)  $\displaystyle 
\alpha(t)\geq \alpha_0>0, ~ \tau_1(\|\alpha\|+\|\beta\|)+\left\|\frac{\beta}{\alpha}\right\|<1
$;

b)   $\displaystyle 
\beta(t)\geq \beta_0>0, ~ \tau_2(\|\alpha\|+\|\beta\|)+\left\|\frac{\alpha}{\beta}\right\|<1;
$

c)  $\displaystyle \alpha(t)+\beta(t)\geq a_0>0, (\|\alpha\|+\|\beta\|)\left(\tau_1\left\|\frac{\alpha}{\alpha+\beta}\right\|
+\tau_2\left\|\frac{\beta}{\alpha+\beta}\right\|\right)<1$;

d)  $\displaystyle  \alpha(t)+\beta(t)\geq a_0>0, \|\alpha+\beta\|\left(\tau_1\left\|\frac{\alpha}{\alpha+\beta}\right\|
+\tau_2\left\|\frac{\beta}{\alpha+\beta}\right\|\right)<1-\tau_1\|\alpha\|-\tau_2\|\beta\| $.

Then equation \eqref{25a} is LES.
\end{statement}
 
 \end{example}

Sometimes for equation \eqref{1}, we  are interested only in solutions in a certain interval  $A\leq x(t)\leq B$. In this
case, for GES we will consider only such solutions. This in particular is true for population dynamics equations, where only positive solutions are biologically senseful, 
and there is also a some natural upper bound.

If  linearised equation \eqref{2} is  UES,  we can conclude that  the zero equilibrium (or any other constant equilibrium, we can make a shift to zero) of equation \eqref{1} is LES.
The situation with global exponential stability (GES) is more complicated. 
One of the methods named global linearised stability \cite{BB2009b}, can be applied by  construction  another linear equation, 
associated with nonlinear equation \eqref{1}.
Let $x(t)$ be an arbitrary  fixed solution of equation \eqref{1}. 
Denote
$$
c_j(t)=\left\{\begin{array}{cc}
\frac{f_j(t,x(h_j(t)))}{x(h_j(t))},& x(h_j(t))\neq 0,\\
0, &x(h_j(t))=0,\end{array}\right. 
$$
and consider the associated linear equation
\begin{equation}\label{3}
\dot{y}(t)+\sum_{j=1}^n c_j(t)y(h_j(t))=0.
\end{equation}

If equation \eqref{3} is UES, then $x(t)$ has an exponential estimate, as it solves \eqref{3}. Hence every solution
of \eqref{1} has an exponential estimate. It means that equation \eqref{1} is GES.

More precisely, the global linearised stability method \cite{BB2009b} is formulated below.

\begin{theorem}\cite{BB2009b}\label{th1}
Assume that  there exist  numbers
$$
-\infty \leq A  \leq   B   \leq +\infty, ~-\infty \leq d_j\leq D_j \leq +\infty, ~j=1,\dots, n,
$$
such that any solution $x(t)$ of equation \eqref{1} has a priori estimates 
$$
A   \leq \liminf_{s \rightarrow\infty}x(s)\leq \limsup_{s\rightarrow\infty}x(s)\leq   B
$$
and 
\begin{equation}\label{4}
d_j \leq \frac{f_j(t,u)}{u}\leq D_j,~ u\in [A,B], \quad j =1, \dots, n.
\end{equation}

If equation \eqref{3} is UES  for any $c_j(t)$ satisfying the inequalities 
$
d_j\leq c_j(t)\leq D_j,
$
then \eqref{1} is GES.
\end{theorem}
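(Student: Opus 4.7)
The plan is to reduce GES of \eqref{1} to UES of an associated linear equation of the form \eqref{3} whose coefficients are determined by the trajectory itself. Fix an arbitrary solution $x(t)$ of the initial value problem \eqref{1}, \eqref{2A} and, for each $j=1,\dots,n$, set
$$c_j(t)=\begin{cases}\dfrac{f_j(t,x(h_j(t)))}{x(h_j(t))},& x(h_j(t))\neq 0,\\[1mm] \tilde c_j,& x(h_j(t))=0,\end{cases}$$
where $\tilde c_j$ is any chosen element of $[d_j,D_j]$ (the value is immaterial because the coefficient multiplies zero). By construction $x$ satisfies the linear equation \eqref{3} with these coefficients on $[t_0,\infty)$.

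The next step is to verify that the $c_j$ are admissible from some time on. Since $h_j(t)\to\infty$ and the a priori bounds give $\liminf_{s\to\infty}x(s)\geq A$ and $\limsup_{s\to\infty}x(s)\leq B$, there exists $T\geq t_0$ with $x(h_j(t))\in[A,B]$ for every $t\geq T$ and every $j$. Hypothesis \eqref{4} then forces $c_j(t)\in[d_j,D_j]$ on $[T,\infty)$. Interpreting the UES assumption as providing constants $\mathcal{M},\nu>0$ common to every admissible coefficient tuple, I would view $x$ as the solution of \eqref{3} started at the moment $T$ with prehistory $x|_{(-\infty,T]}$, and invoke UES to obtain
$$|x(t)|\leq \mathcal{M}e^{-\nu(t-T)}\sup_{s\leq T}|x(s)|,\qquad t\geq T.$$

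It remains to control $\sup_{s\leq T}|x(s)|$ by a universal multiple of $\sup_{s\leq t_0}|\psi(s)|$. Since the ratios $f_j(t,u)/u$ are uniformly bounded on $[A,B]$ and the coefficients and delays of \eqref{1} are essentially bounded, a direct Gronwall-type estimate applied to \eqref{1} on the finite interval $[t_0,T]$ yields $\sup_{s\leq T}|x(s)|\leq Ke^{L(T-t_0)}\sup_{s\leq t_0}|\psi(s)|$ with $K,L$ independent of the particular solution; combining this with the previous bound gives the exponential estimate of Definition~\ref{def1} with enlarged constant $\mathcal{M}'=\mathcal{M}Ke^{L(T-t_0)}$ and the same $\nu$, establishing GES.

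The main obstacle is twofold: the uniformity of the UES constants $(\mathcal{M},\nu)$ across the admissible family $\{(c_1,\dots,c_n):d_j\leq c_j(t)\leq D_j\}$ and the uniformity of the auxiliary time $T$ across solutions. The former is essentially the force of the UES hypothesis as phrased ``for any admissible $c_j$''; because the family is bounded in $\mathbf{L}_\infty$ by $\max_j\{|d_j|,|D_j|\}$, a robust Bohl-Perron argument (Lemma~\ref{lemma1}) applied to the whole family yields common $\mathcal{M},\nu$. The latter is immediate when \eqref{4} is global ($A=-\infty$, $B=+\infty$, whereby one may take $T=t_0$); in the general case $T$ must be controlled uniformly in $\psi$, which is exactly the setting in which this linearised stability machinery is typically deployed for population dynamics models.
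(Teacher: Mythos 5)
Your proposal follows the same route as the paper: you define the trajectory-dependent coefficients $c_j(t)=f_j(t,x(h_j(t)))/x(h_j(t))$, observe that the fixed solution $x$ of \eqref{1} itself solves the associated linear equation \eqref{3}, check via the a priori bounds and \eqref{4} that these coefficients are admissible, and invoke the assumed UES over the admissible family to obtain the exponential estimate — precisely the global linearisation argument the paper sketches before Theorem~\ref{th1} (quoted from \cite{BB2009b}). Your additional care about the value assigned when $x(h_j(t))=0$, the transient interval $[t_0,T]$ and the uniformity of the constants only fills in details the paper leaves implicit, so the argument is essentially the paper's.
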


\begin{example}\label{example3}
Let us return to model  \eqref{25a}.
To obtain global stability result for  \eqref{25a}, we will apply Theorem \ref{th1}.
Take $x(t)$ which  solves \eqref{25a}. and
denote 
\begin{equation}\label{27abc}
c(t):=\frac{f_1(t,x(h_1(t)))}{x(h_1(t))}=\alpha(t)\left(1+\frac{1}{1+x^2(h_1(t))}\right),
~~  d(t):=\frac{f_2(t,x(h_2(t)))}{x(h_2(t))}=\frac{\beta(t)}{1+x^2(h_2(t))}.
\end{equation}
Next,  consider a linear counterpart of \eqref{25a}
\begin{equation}\label{27a}
\dot{z}(t)+c(t)z(h_1(t))+d(t)z(h_2(t))=0
\end{equation}
with the above notation.

Since the inequality $d(t)\geq d_0>0$ fails, both Corollary~\ref{cor3a} and Part b) of Corollary~\ref{cor1}
fail  for equation \eqref{27a}. In the statement below we will apply Part~1 of Corollary~\ref{cor1}
(Parts~3  and 4 can be considered in a similar way) and also Corollary~\ref{cor3b}.

\begin{statement}\label{st4}
Assume that 
there exists a positive lower bound  $\alpha_0>0$ such that either  \vspace{2mm}
\\
a) $\displaystyle 
\alpha(t)\geq \alpha_0,  \quad \tau_1(2\|\alpha\|+\|\beta\|)+\left\|\frac{\beta}{\alpha}\right\|<1$  \vspace{2mm}
\\
or  \vspace{2mm}
\\
b) $\displaystyle 
\alpha(t)+\beta(t)\geq a_0,  \quad  (\tau_1+\tau_2)(2\|\alpha\|+\|\beta\|)<1+\frac{1}{e}$\,.
\vspace{2mm}

Then,  \eqref{25a} is GES.
\end{statement}

\begin{proof}
a)
We have for \eqref{27a},  taking into account  \eqref{27abc},
$$
c(t)\geq \alpha(t)\geq \alpha_0>0, ~~ \|c\|=2\|\alpha\|, \|d\|=\|\beta\|,~~ \left\|\frac{d}{c}\right\|
=\left\|\frac{\beta}{\alpha}\right\|.
$$
Part 1) of Corollary \ref{cor1} implies that equation \eqref{27a} is UES. By Theorem \ref{th2},
equation \eqref{25a} is globally exponentially stable.

b) We have 
$$
0<\alpha_0\leq \alpha(t)\leq c(t)+d(t)\leq 2\|\alpha\|+\|\beta\|.
$$
Hence
$$
\limsup_{t\rightarrow\infty}\left(\int_{h_1(t)}^t (c(s)+d(s))ds+\int_{h_2(t)}^t (c(s)+d(s))ds\right)
\leq (2\|\alpha\|+\|\beta\|)(\tau_1+\tau_2)<1+\frac{1}{e} \, .
$$
Inequality \eqref{17b} holds for equation \eqref{27a}. By Corollary \ref{cor3b}, this equation is UES.
Hence equation \eqref{25a} is GES.
\end{proof}

\end{example}

In Example \ref{example3}, GES conditions were obtained without applications of a priori 
estimation of solutions. In this case we estimated the coefficients of  linear equation \eqref{27a} that depend on the unknown solution over an infinite interval. If an approximate estimate of  solutions is known,  the infinite interval can be replaced by a finite one. 
This leads to more accurate and efficient stability tests for nonlinear equations.

Examples of the use of a priori estimates, including the three classical models of Mackey-Glass equations, can be found in 
the papers \cite{BB2009b, BB2012, BB2013a, BB2013b}.

\section{Discussion and open problems}
\label{sec:discussion}

Mathematical models described by differential equations with one or multiple delays  have applications in many
areas of natural sciences and engineering. One of important problems for  these models is to obtain explicit simply verifiable
local and global stability conditions. In the last two decades  many interesting results
for all kinds of functional differential equations appeared. In addition to the works cited in this article, we can also mention the following interesting papers on the stability of linear and nonlinear delay differential equations
\cite{alex1,alex2,barreira1,barreira2,chermak,Dix, dom1,dom2, GD, gil,gus},\cite{mal1}-\cite{mal6},
\cite{ngoc1,hu,LizPituk, stav, Tunc1, Tunc2,yskak}, and some earlier results \cite{GP,GD, kolmMysh}.

Explicit stability conditions for linear for systems of delay differential equations
one can find in \cite{dom4,dom5,faria2,tunc3,tunc5,greb,hu,maly,egorov,mul1,mul2,tian,wang,zhang1}
and on stability  of integro-differential equations and  equations with distributed delays \cite{dom2,dom6,dom8},
\cite{ngoc2}-\cite{ngoc7},\cite{tunc1,tunc4}.

Unfortunately, there are no monographs that correspond to the current state-of-arts in this area. 
There are no review articles on stability for individual types of equations.
In this paper we partially fill the gap for scalar differential equations with several delays.
Together with a review, we also obtain several new exponential stability conditions.
These new UES tests are quite simple and convenient in applications.

To apply global linearised stability method for a given nonlinear differential equation,
we have to estimate coefficients of a corresponding linear delay differential equation dependent, generally,
on an unknown solution of the given nonlinear equation. In  Section~\ref{applications}, we get such estimates for
the coefficients of \eqref{25a} without any information about solutions. 
However, for many known mathematical models, in order 
to estimate the coefficients of linear equation, we need first to construct  a priori  estimates for solutions
of a given nonlinear equation:
$$
A  \leq \liminf_{t\rightarrow\infty} x(t)\leq \limsup_{t\rightarrow\infty} x(t)\leq B.
$$
Examples of applications of global linearised stability method for some known mathematical models
can be  found in \cite{BB2009b,BB2013a,BB2013b,BB2025b}.

In this paper we separate stability tests for equations with measurable and  continuous parameters.
Actually, any stability test for equations with measurable parameters is also valid,
once we reduce ourselves to  continuous delays and coefficients, but the converse, generally, is not true. 
We investigate here equations, where  both coefficients and delays are bounded, which is appropriate to study GES and LES, but is not the general case.
New results on  stability for equations with unbounded coefficients and/or delays with
a review on known results can be found in the recent paper \cite{BB2025}.

In line with the review, we list some open problems arising from it. 

\begin{itemize}
\item
Is it possible to extend the results of  Theorem~\ref{th20} (with the same constant) to the case when coefficients and delays are allowed to be discontinuous (are measurable)? 
Note that Theorem~\ref{th20a} was extended to measurable parameters (and more general equations, including those with a distributed delay) not long ago, see
\cite{Malygina, stav} and references therein.
\item
The next open problem is to extend (if possible) pointwise condition \eqref{23a} to more general integral form 
$$
\limsup_{t \to \infty} \sum_{j=1}^n \int_{h_j(t)}^t a_j(\zeta) d\zeta \leq  M <1.
$$
Does this inequality imply uniform exponential stability of equation \eqref{2}? And if not, what is the best constant?
\item
All the applications of global linearised stability method so far have been obtained for scalar equations.
How can this method be extended to vector equations and systems of scalar equations, as well as
equations of higher orders, and models with impulses?
\item
Consider equation \eqref{2d} where coefficients can have arbitrary signs, or be oscillatory.
Assume that $\displaystyle \liminf_{t\rightarrow\infty}\int_{h(t)}^t a(\zeta)d\zeta>0$. 
Is there a constant $M>0$ such that the condition $$\limsup_{t\rightarrow\infty}\int_{h(t)}^t a(\zeta)d\zeta <M$$
implies AS of equation \eqref{2d}?
\item
Can Theorem~\ref{th23c} be generalized (with $\limsup$ or/and $\liminf$ replacing the exact values) to the model, where both coefficients are variable, not only the delayed one, and the nondelay term also has a variable coefficient
$$
\dot{x}(t)=- \alpha(t)x(t)+\beta(t)x(h(t))?
$$
\end{itemize}

\section*{Acknowledgment}

The second author (E. Braverman) was partially supported by the NSERC Discovery Grant \#  RGPIN-2020-03934.

\end{document}